\long\def\symbolfootnote[#1]#2{\begingroup
\def\thefootnote{\fnsymbol{footnote}}\footnote[#1]{#2}\endgroup}
\numberwithin{equation}{section}
\theoremstyle{plain}
\newtheorem*{theorem*}{Theorem}
\newtheorem{thm}{Theorem}[section]
\newtheorem{lem}[thm]{Lemma}
\newtheorem{prop}[thm]{Proposition}
\newtheorem{cor}[thm]{Corollary}
\newtheorem{conj}[thm]{Conjecture}
\newtheorem{quest}[thm]{Question}
\theoremstyle{definition}
\newtheorem{defin}[thm]{Definition}
\newtheorem{dfn}[thm]{Definition}
\newtheorem{ex}[thm]{Example}
\theoremstyle{remark}
\newtheorem{rmk}[thm]{Remark}
\newtheoremstyle{maintheorem}{}{}{\itshape}{}{\bfseries}{}{.5em}{#1 \!\thmnote{#3}.}
\theoremstyle{maintheorem}
\newtheorem*{mainthm}{Theorem}
\newtheorem*{maincor}{Corollary}
\newcommand{\N}{\mathbb{N}}
\newcommand{\Q}{\mathbb{Q}}
\newcommand{\R}{\mathbb{R}}
\newcommand{\Z}{\mathbb{Z}}
\newcommand{\C}{\mathbb{C}}
\newcommand{\Aut}{\mathrm{Aut}}
\newcommand{\Nc}{\mathcal{N}}
\renewcommand{\S}{\mathcal{S}}
\newcommand{\U}{\mathcal{U}}
\renewcommand{\P}{\mathcal{P}}
\newcommand{\Wh}{\operatorname{Wh}}
\newcommand{\Hom}{\operatorname{Hom}}
\newcommand{\Map}{\operatorname{Map}}
\newcommand{\minsupp}{\operatorname{minsupp}}
\newcommand{\Cay}{\operatorname{Cay}}
\newcommand{\vol}{\operatorname{vol}}
\newcommand{\ab}{\operatorname{ab}}
\renewcommand{\bar}{\overline}
\renewcommand{\hat}{\widehat}
\newcommand{\norm}{\mathfrak{N}}
\newcommand{\supp}{\operatorname{supp}}
\newcommand{\im}{\operatorname{im}}
\renewcommand{\leq}{\leqslant}
\renewcommand{\geq}{\geqslant}
\def\into{\hookrightarrow}
\def\iff{if and only if }
\def\id{\mathrm{id}}
\def\GL{\mathrm{GL}}
\def\s-{\smallsetminus}
\def\D{\mathcal{D}}
\newcommand{\detc}{\operatorname{det^c_\D}}
\def\P{\mathbb{P}}
\def\Pol{\mathcal{P}}
\def\K{\mathbb{K}}
\newcommand{\fox}[2]{\frac{\partial #1}{\partial #2}}
\newcommand{\tolabel}[1]{\overset{#1}{\longrightarrow}}
\renewcommand{\det}{\operatorname{det}}
\newcommand{\tor}{\rho^{(2)}}
\renewcommand{\phi}{\varphi}
\newcommand{\Ddetc}{\operatorname{det_{\D(G)}^c}}
\newcommand{\Ddet}{\operatorname{det_{\D(G)}}}
\newcounter{dawidcomments}
\newcounter{flocomments}
\newcounter{hnncomments}
\title[Alexander and Thurston norms, and the BNS invariants for $F_n$-by-$\Z$]{Alexander and Thurston norms, and the Bieri--Neumann--Strebel invariants for free-by-cyclic groups}
\author{Florian Funke and Dawid Kielak}
\begin{document}

	\begin{abstract}
                \noindent We investigate Friedl--L\"uck's universal $L^2$-torsion for descending HNN extensions of finitely generated free groups, and so in particular for $F_n$-by-$\Z$ groups.
                This invariant induces a semi-norm on the first cohomology of the group which is an analogue of the Thurston norm for $3$-manifold groups.

                We prove that this Thurston semi-norm is an upper bound for the Alexander semi-norm defined by \mbox{McMullen}, as well as for the higher Alexander semi-norms defined by Harvey. The same inequalities are known to hold for $3$-manifold groups.

                We also prove that the Newton polytopes of the universal $L^2$-torsion of a descending HNN extension of $F_2$ locally determine the Bieri--Neumann--Strebel invariant of the group. We give an explicit means of computing the BNS invariant for such groups. As a corollary, we prove that the Bieri--Neumann-Strebel invariant of a descending HNN extension of $F_2$ has finitely many connected components.

                When the HNN extension is taken over $F_n$ along a polynomially growing automorphism with unipotent image in $\GL(n, \Z)$, we show that the Newton polytope of the universal $L^2$-torsion and the BNS invariant completely determine one another. We also show that in this case the Alexander norm, its higher incarnations, and the Thurston norm all coincide.
        \end{abstract}
	
\maketitle

\section{Introduction}
Whenever a free finite $G$-CW-complex $X$ is $L^2$-acyclic, i.e. its $L^2$-Betti numbers vanish, a secondary invariant called the $L^2$-torsion $\tor(X;\Nc(G))$ enters the stage \cite[Chapter 3]{Lueck2002}. It takes values in $\R$ and captures in many cases geometric data associated to $X$: If $X$ is a closed hyperbolic $3$-manifold, then it was shown by L\"uck and Schick \cite{LueckSchick1999} that
\[\tor(\widetilde{X};\Nc(\pi_1(X))) = -\frac{1}{6\pi}\cdot\vol(X)\]
and if $X$ is the classifying space of a free-by-cyclic group $F_n\rtimes_g\Z$, with $g\in\Aut(F_n)$, then $-\tor(\widetilde{X}; F_n\rtimes_g\Z)$ gives a lower bound on the growth rates of $g$, as shown by Clay \cite[Theorem 5.2]{Clay2015}.

Many generalisations of the $L^2$-torsion have been constructed, e.g. the $L^2$-Alexander torsion (by Dubois--Friedl--L\"uck \cite{DuboisEtal2014}) and $L^2$-torsion function, or more generally $L^2$-torsion twisted with finite-dimensional representations (by L\"uck \cite{Lueck2015}).

In a series of papers, Friedl and L\"uck~\cite{FriedlLueck2015a, FriedlLueck2015, FriedlLueck2015b} constructed the \emph{universal $L^2$-torsion} $\tor_u(X;\Nc(G))$ for any free finite $L^2$-acyclic $G$-CW-complex. It takes values in $\Wh^w(G)$, a weak version  of the Whitehead group of $G$ which is adapted to the setting of $L^2$-invariants. The Fuglede--Kadison determinant induces a map $\Wh^w(G)\to\R$ taking $\tor_u(X;\Nc(G))$ to $\tor(X;\Nc(G))$, and similar maps with $\Wh^w(G)$ as their domain take the universal $L^2$-torsion to the aforementioned generalisations of $L^2$-torsion.

Assuming that $G$ satisfies the Atiyah Conjecture, Friedl--Lück \cite{FriedlLueck2015b} construct a \emph{polytope homomorphism}
\[ \P\colon \Wh^w(G)\to \Pol_T(H_1(G)_f)\]
where $H_1(G)_f$ denotes the free part of the first integral homology of $G$, and $\Pol_T(H_1(G)_f)$ denotes the Grothendieck group of the commutative monoid whose elements are polytopes in $H_1(G)_f\otimes \R$ (up to translation) with pointwise addition (also called \emph{Minkowski sum}). The image of $-\tor_u(X;\Nc(G))$ under $\P$ is the \emph{$L^2$-torsion polytope of $X$}, denoted by $P_{L^2}(X;G)$. If $M\neq S^1\times D^2$ is a compact connected aspherical $3$-manifold with empty or toroidal boundary such that $\pi_1(M)$ satisfies the Atiyah Conjecture, then it is shown in ~\cite[Theorem 3.27]{ FriedlLueck2015b} that $P_{L^2}(\widetilde{M};\pi_1(M))$ induces another well-known invariant of $M$, the \emph{Thurston norm}
\[\| \cdot \|_T \colon H^1(M;\R) \to \R \]
This semi-norm was defined by Thurston~\cite{Thurston1986} and is intimately related to the question of the manifold fibering over the circle.

McMullen~\cite{McMullen2002} constructed an \emph{Alexander semi-norm} from the Alexander polynomial and showed that it provides a lower bound for the Thurston semi-norm. This was later generalised by Harvey \cite{Harvey05} to higher Alexander semi-norms \[\delta_n\colon H^1(M;\R) \to \R\]

Friedl--Lück's theory can also be applied to free-by-cyclic groups, or more generally to descending HNN extensions $G = F_n*_g$, with $g$ an injective endomorphism of $F_n$, and yields in this context a semi-norm
\[ \|\cdot\|_T\colon H^1(G;\R) \to \R\]
which we also call \emph{Thurston norm} due to the analogy with the $3$-manifold setting. We build a similar picture as for $3$-manifolds and prove that this semi-norm is an upper bound for McMullen--Harvey's Alexander semi-norms:

\begin{mainthm}[\ref{main thm alex vs thurston}]
	Let $G = F_n \ast_g$ be a descending HNN extension of $F_n$ with stable letter $t$, and let $\psi \in H^1(G;\R)$. Then
 \[
  \delta_1(\psi) \leqslant \delta_2(\psi) \leqslant \dots \leqslant \| \psi \|_T
 \]
If $\beta_1(G) \geqslant 2$, then also $\delta_0(\psi) \leqslant \delta_1(\psi)$.
If $\beta_1(G) = 1$, then $\delta_0(\psi) -| \psi(t) | \leqslant \delta_1(\psi)$.

When $\psi$ is fibred (that is $\ker \psi$ is finitely generated), then all the inequalities above become equalities.
\end{mainthm}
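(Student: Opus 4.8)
The plan is to realise every quantity in the statement as the $\psi$-width of the Newton polytope of a single ``Alexander matrix'', computed over a tower of skew fields, and to reduce all the asserted inequalities to one monotonicity principle for such widths. Since all the semi-norms in play are continuous and homogeneous --- for the $\delta_n$ by McMullen and Harvey, for $\|\cdot\|_T$ by construction via the polytope homomorphism --- it suffices to treat integral primitive classes $\psi$. Fix the standard presentation $G = \langle x_1, \dots, x_n, t \mid t x_i t^{-1} = g(x_i) \rangle$ and its presentation $2$-complex $X$. Because $F_n$ is aspherical and an HNN extension of an aspherical group along a subgroup is again aspherical, $X$ is a $K(G,1)$, so the cellular chain complex of $\widetilde X$ is a free $\Z G$-resolution
\[ 0 \to \Z G^{\,n} \xrightarrow{\ \partial_2\ } \Z G^{\,n+1} \xrightarrow{\ \partial_1\ } \Z G \to 0, \]
with $\partial_1 = \big((x_1 - 1), \dots, (x_n - 1), (t-1)\big)$ and $\partial_2$ the Fox Jacobian of the relators $r_i = g(x_i)\, t x_i^{-1} t^{-1}$; a direct computation gives that the $n \times n$ block of $\partial_2$ on the $x$-coordinates equals $\pm(tI - \overline J)$, where $\overline J \in M_n(\Z G)$ is the image of the Fox Jacobian of $g$.

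Since $t$ has infinite order, $t - 1$ is invertible in the Linnell skew field $\mathcal D(G)$, which is available because $G = F_n \ast_g$ satisfies the Atiyah Conjecture. Hence the universal $L^2$-torsion of $X$ is represented in $\Wh^w(G)$ by $\det^c_{\mathcal D(G)}(tI - \overline J)$ up to a factor $(t-1)^{\pm 1}$, and applying the polytope homomorphism shows that $\|\psi\|_T$ equals the $\psi$-width $w_\psi$ of the Newton polytope of $\det^c_{\mathcal D(G)}(tI - \overline J)$, adjusted by the width $|\psi(\bar t)|$ of the segment $[0, \bar t]$ contributed by $\partial_1$, where $\bar t$ is the image of $t$ in $H_1(G)_f$. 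In exactly the same way, Harvey's $n$-th Alexander semi-norm $\delta_n(\psi)$ is $w_\psi$ of the Newton polytope of the image of $tI - \overline J$ over the Ore skew field $\mathcal K_n := \operatorname{Ore}(\Z[G / G_r^{(n+1)}])$ (this image computing the order of the $n$-th Alexander module, i.e.\ of $H_1$ of the $G_r^{(n+1)}$-cover of $X$), again up to a $(t-1)$-type adjustment; and $\delta_0$ is the case $n = 0$, over $\mathcal K_0 = \Q(H_1(G)_f)$, except that McMullen's normalisation of $\delta_0$ involves $H_1$ relative to the basepoint rather than absolute $H_1$, which adds the term $|\psi(t)|$ precisely when $\beta_1(G) = 1$. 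These boundary adjustments cancel throughout the comparison except at this single spot, which is the origin of the $\beta_1(G) = 1$ clause. Granting this reconciliation, everything except the fibred statement reduces to proving, for the fixed matrix $M = tI - \overline J$, the chain
\[ w_\psi\big(\operatorname{Newt}_{\mathcal K_0}(M)\big) \leq w_\psi\big(\operatorname{Newt}_{\mathcal K_1}(M)\big) \leq \cdots \leq w_\psi\big(\operatorname{Newt}_{\mathcal D(G)}(M)\big). \]

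Proving this chain is the crux, and the obstacle is that there is no ring homomorphism from a finer skew field in the tower to a coarser one, so the comparison cannot be made by naive functoriality. I would work with Sylvester rank functions: after localising $\Z[G / G_r^{(n+1)}]$ in the $\psi$-direction, $w_\psi(\operatorname{Newt}_{\mathcal K_n}(M))$ becomes the $\mathcal K_n$-rank of $\operatorname{coker} M$ --- the higher-order Alexander module of Cochran and Harvey --- and $w_\psi(\operatorname{Newt}_{\mathcal K_n}(M)) \leq w_\psi(\operatorname{Newt}_{\mathcal K_{n+1}}(M))$ is then Harvey's rank monotonicity, which uses that $G_r^{(n+1)} / G_r^{(n+2)}$ is torsion-free abelian, so $\mathcal K_{n+1}$ arises from a quotient refining $\mathcal K_n$ by an Ore/crossed-product extension along which module ranks do not decrease. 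The genuinely new ingredient, beyond Harvey, is the top inequality $\delta_n(\psi) \leq \|\psi\|_T$: here one must show $\mathcal D(G)$ dominates every $\mathcal K_n$ in the appropriate rank-function sense, which I would deduce from $\mathcal D(G)$ being the division closure of $\Z G$ inside the algebra of affiliated operators, together with the Atiyah Conjecture for $G$, so that the $\mathcal D(G)$-rank of $\operatorname{coker} M$ bounds all the finite-stage ranks. I expect the real work to lie in packaging this rank comparison uniformly across the whole tower and in checking that the $(t-1)$-corrections line up, rather than in any single inequality.

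For the fibred case, suppose $\ker \psi$ is finitely generated; being a normal subgroup of a free-by-cyclic group with infinite cyclic quotient, it has cohomological dimension $1$ and is therefore a finitely generated free group $F_m$, so $G \cong F_m \rtimes_\Phi \Z$ for some $\Phi \in \Aut(F_m)$. Choosing coordinates adapted to this splitting makes $M = tI - \overline J$ conjugate to $tI - \Phi_\ast$, where $\Phi_\ast$ is the automorphism induced by $\Phi$ on $H_1(F_m)$, graded by $H_1(G)_f$ --- and crucially $\Phi_\ast$ is already defined and invertible over $\Z[H_1(G)_f]$. Hence the Newton polytope attached to $M$ over $\mathcal K_0$ already coincides with the one over every $\mathcal K_n$ and over $\mathcal D(G)$: no further non-commutativity is detected. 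The entire chain of inequalities therefore collapses to equalities --- the ``clean'' invariants all equalling $\|\psi\|_T = w_\psi(\operatorname{Newt}(\det^c_{\mathcal D(G)}(tI - \Phi_\ast))) - |\psi(\bar t)|$, with McMullen's $\delta_0$ differing by $|\psi(t)|$ exactly when $\beta_1(G) = 1$, as in the inequality. By continuity and linearity on the fibred cone this suffices for arbitrary, possibly irrational, fibred $\psi$. This mirrors the $3$-manifold picture, where fibred classes lie on top-dimensional faces of the Thurston ball and realise equality in both McMullen's and Harvey's bounds.
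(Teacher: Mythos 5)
Your overall framing (reduce to integral classes, quote Harvey for the inequalities among the $\delta_i$, express both $\delta_i(\psi)$ and $\|\psi\|_T$ as $\psi$-widths of Newton polytopes of Dieudonn\'e determinants of the Fox matrix over a tower of skew fields, with the $(t-1)$/basepoint correction explaining the $b_1(G)=1$ clause) matches the architecture of the paper's argument. But at the step you yourself identify as the crux --- $\delta_i(\psi)\leqslant\|\psi\|_T$ --- there is a genuine gap. You assert that the $\D(G)$-rank (equivalently, the width over $\D(K)$ for $K=\ker\psi$) dominates the ranks over the Ore fields of the solvable quotients ``because $\D(G)$ is the division closure of $\Z G$ in $\U(G)$ and $G$ satisfies the Atiyah Conjecture''. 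This is not a formal consequence of those two facts: the desired domination is exactly Cohn's criterion for the existence of a specialisation from $\D(K)$ to $\D(L)$ (\cref{spec criterion}), i.e.\ it is essentially equivalent to the statement you are trying to prove, and some input beyond the Atiyah Conjecture is needed. In the paper this input is the special structure of the fibre: $\ker\psi$ is locally free or (locally free)-by-$\Z$, so $\Q K$ is a semifir (\cref{semifir}), hence a Sylvester domain whose universal field of fractions is identified with $\D(K)$ (\cref{loc free spec}, \cref{extensions are spec}); this produces the specialisation. Even then the width inequality is not automatic: one still needs a separate degree comparison for Dieudonn\'e determinants of matrices over twisted Laurent rings under a specialisation (\cref{spec then good}), proved by an explicit induction using Euclid's algorithm and determinantal sums, precisely because the canonical representative of the determinant involves inverses that may leave the domain of the specialisation. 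Your proposal contains no substitute for either ingredient, and you flag this yourself (``I expect the real work to lie in packaging this'').

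The fibred case also does not work as written. The Fox Jacobian of the monodromy $\Phi$ has entries in $\Z F_m$, and $tI-J(\Phi)$ is not conjugate (over $\Z G$ or over $\D(G)$) to $tI-\Phi_*$ with $\Phi_*$ the abelianised action; consequently the claim that ``no further non-commutativity is detected'' and that the Newton polytope over the rational function field already equals the one over $\D(G)$ is unjustified --- in general the $L^2$-torsion polytope and the Alexander polytope of a fibred free-by-cyclic group differ, and even the equality of their $\psi$-widths is exactly what has to be proved. The paper's route is different: a finitely generated kernel is free (Geoghegan--Mihalik--Sapir--Wise), the fibration formula for universal $L^2$-torsion gives $\|\psi\|_T=-\chi^{(2)}(\ker\psi)=b_1(\ker\psi)-1$, and McMullen's bound $b_1(\ker\psi)-1\leqslant\|\psi\|_A$ then squeezes the whole chain of inequalities into equalities. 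Some argument of this kind (or an honest proof of your polytope-coincidence claim) is needed to close the fibred case.
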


For a particular type of automorphism called \emph{UPG} (see \cref{def:upg}) we obtain an equality:

\begin{maincor}[\ref{main corollary 2}]
	Let $G = F_n\rtimes_g \Z$ with $n\geq 2$ and $g$ a UPG automorphism. Let $\phi\in H^1(G;\R)$. Then for all $k\geq 0$ we have
	\[ \delta_k(\phi) = \|\phi\|_T.\]
\end{maincor}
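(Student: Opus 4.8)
The plan is to use \cref{main thm alex vs thurston} to reduce the statement to a single inequality, and then to verify that inequality by an explicit computation exploiting the triangular normal form of UPG automorphisms. First, since $g$ is UPG its action on $H_1(F_n;\Z)\cong\Z^n$ is unipotent, so $g_{\ast}-\id$ is nilpotent and non-invertible; hence $\coker(g_{\ast}-\id)\neq0$, and the Wang sequence of the mapping torus $M_g$ gives $\beta_1(G)=1+\rk\coker(g_{\ast}-\id)\geq2$. Thus \cref{main thm alex vs thurston} applies in the strong form $\delta_0(\phi)\leq\delta_1(\phi)\leq\cdots\leq\|\phi\|_T$ for every $\phi\in H^1(G;\R)$, and it suffices to prove the reverse bound $\|\phi\|_T\leq\delta_0(\phi)$. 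Since $\beta_1(G)\geq2$, here $\delta_0$ is precisely the norm attached to the Newton polytope of McMullen's multivariable Alexander polynomial $\Delta_G\in\Z[H_1(G)_f]$, while $\|\cdot\|_T$ is the norm attached to the $L^2$-torsion polytope of $M_g$; so it is enough to show that the $L^2$-torsion polytope is contained, up to translation, in the Newton polytope of $\Delta_G$ (in fact the argument will show the two coincide).

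Next I would pass to an upper-triangular representative: by the structure theory of UPG automorphisms (\cref{def:upg}) one may choose a free basis $x_1,\dots,x_n$ of $F_n$ with $g(x_i)=p_i\,x_i\,q_i$ and $p_i,q_i\in\langle x_1,\dots,x_{i-1}\rangle$, so $p_1=q_1=1$, giving the presentation $G=\langle x_1,\dots,x_n,t\mid t x_i t^{-1}=g(x_i)\rangle$. Using the relations, the Fox--Jacobian of the relators $r_i=t x_i t^{-1} g(x_i)^{-1}$ reduces to the $n\times(n+1)$ matrix $(\,tI-A\mid v\,)$ over $\Z[G]$, where $A_{ij}=\fox{g(x_i)}{x_j}$; the normal form makes $A$ lower triangular with diagonal entries $A_{ii}=p_i$. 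Feeding this presentation into Friedl--Lück's machinery and applying the polytope homomorphism, $P_{L^2}(M_g;G)$ is assembled from the triangular matrix $tI-A$: the $0$-cell contributes a single segment that cancels the $i=1$ diagonal term (as $p_1=1$), so that $P_{L^2}(M_g;G)$ is the zonotope $\sum_{i=2}^{n}[\,\bar p_i,\bar t\,]$ in $H_1(G)_f\otimes\R$, the Minkowski sum of the segments spanned by the images $\bar p_i$ of the diagonal entries $p_i$ and the image $\bar t$ of $t$.

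It then remains to compute the Newton polytope of $\Delta_G$, which is the heart of the matter. By definition $\Delta_G$ is the gcd of the $n\times n$ minors of the abelianised Fox--Jacobian $(\,tI-\bar A\mid\bar v\,)$ over $\Z[H]$ ($H=H_1(G)_f$), with $\bar A$ lower triangular and diagonal $\bar p_i$. Deleting the $t$-column produces the minor $\det(tI-\bar A)=\prod_{i=1}^{n}(t-\bar p_i)=(t-1)\prod_{i=2}^{n}(t-\bar p_i)$, so the gcd divides $(t-1)\prod_{i\geq2}(t-\bar p_i)$. The crux is to show the gcd equals $\prod_{i\geq2}(t-\bar p_i)$: exactly one factor $t-1$ disappears and nothing else. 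One factor $t-1$ drops out for the standard reason that with $\beta_1(G)\geq2$ the zeroth elementary ideal of the Alexander module vanishes, so $\Delta_G$ is read off the first elementary ideal; the absence of any further common factor follows by inspecting the remaining columns-deleted minors of the triangular matrix together with the syzygy $\partial_1\partial_2=0$. Granting this, and noting that the extreme coefficients of the product $\prod_{i\geq2}(t-\bar p_i)$ of binomials are $\pm1$ (so its Newton polytope is the Minkowski sum of the segments), we get $\operatorname{Newt}(\Delta_G)=\sum_{i=2}^{n}[\,\bar p_i,\bar t\,]$, literally the same polytope as $P_{L^2}(M_g;G)$ obtained above --- both being read off the single matrix $(\,tI-\bar A\mid\bar v\,)$. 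Hence $\|\phi\|_T=\delta_0(\phi)$, and together with $\delta_0\leq\delta_1\leq\cdots\leq\|\cdot\|_T$ this yields $\delta_k(\phi)=\|\phi\|_T$ for all $k\geq0$.

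The main obstacle is the gcd identification in the last step: controlling how the $t$-column $\bar v$ interacts with the triangular block $tI-\bar A$ so that precisely one factor $t-1$, and nothing more, is cancelled from the gcd. I expect the cleanest route is to avoid the gcd directly and instead compute the Alexander module $H_1(M_g;\Z[H])$ inductively along the $g$-invariant filtration $\langle x_1\rangle<\langle x_1,x_2\rangle<\cdots<F_n$, using at each stage the short exact sequence of $\Z[H]$-modules obtained by adjoining the generator $x_i$, and to match the resulting recursion for the order ideal against the additive contribution of the $i$-th diagonal term to $\tor_u(M_g)$; the delicate point is the behaviour of the $H_0$-terms, which is exactly what accounts for the single surviving factor $t-1$.
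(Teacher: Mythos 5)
Your overall architecture (sandwich via \cref{main thm alex vs thurston} plus an explicit identification of the two extreme polytopes) could work, but it hinges entirely on the claimed normal form, and that is a genuine gap. The assertion that every UPG automorphism admits a free basis $x_1,\dots,x_n$ (after adjusting within its outer class) with $g(x_i)=p_i x_i q_i$ and $p_i,q_i\in\langle x_1,\dots,x_{i-1}\rangle$ does not follow from \cref{def:upg}, which only records unipotence on homology and polynomial growth. The actual structure theory available (Bestvina--Feighn--Handel train tracks, as packaged in \cref{splitting for upg} from Cashen--Levitt) produces unipotent representatives on \emph{filtered graphs}, or equivalently a hierarchy of splittings, not a triangular basis: in the case of an invariant free splitting $F_n=B_1\ast B_2$, triangularising each factor requires replacing $h_i$ by $\mathrm{ad}_{b_i}\circ h_i$ with $b_i\in B_i$, and these corrections are not inner in $F_n$, so they change the outer class of $g$ (and hence the group $G$); blowing down a maximal tree in a filtered-graph representative likewise scrambles the order of the strata. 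This is precisely why the paper's proof of \cref{l2-torsion of upg} is an induction over the splittings of \cref{splitting for upg}, handling the free-product case by a block decomposition of the Fox matrix over the sub-semidirect-products $G_i=B_i\rtimes\Z$ (where only the outer class of each $h_i$ matters), rather than a one-shot triangular computation. Without the normal form, both of your computations --- of $P_{L^2}(G)$ as a zonotope and of the Newton polytope of $\Delta_G$ --- have no starting point. (A smaller slip: the segments should join $0$ to $\overline{t p_i}$, i.e.\ have direction $\bar t+\bar p_i$, not join $\bar p_i$ to $\bar t$.)

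Two further remarks. First, the step you single out as the main obstacle, the gcd identification $\Delta_G=\prod_{i\geq 2}(1-t\bar p_i)$ up to units, is not actually an obstacle: since $b_1(G)\geq 2$, the paper's \cref{invariants for fbc}~(\ref{fbc:alex}) (McMullen's relation $\det(p_0(A(g;\mathcal{S},s)))=(p_0(s)-1)\cdot\Delta_G$) already converts the determinant of the column-deleted Fox matrix into $\Delta_G$, so no fresh analysis of elementary ideals or syzygies is needed once a triangular (or any) presentation is in hand. Second, the paper's route is both different and stronger than yours: \cref{l2-torsion of upg} shows that $\tor_u(G;p_k)$ is represented by the \emph{same} sum $-\sum_i[1-t_i]$ for every $k\in\N\cup\{\infty\}$, so all polytopes $P_{L^2}(G;p_k)$ coincide on the nose; the corollary then follows from \cref{higher Alex polytopes} and \cref{invariants for fbc}~(\ref{fbc:alex}) without invoking the inequality \cref{main thm alex vs thurston} at all, whereas your argument would only compare the endpoints $k=0$ and $k=\infty$ and then appeal to monotonicity. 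If you want to salvage your approach, replace the normal form by the two-case induction along \cref{splitting for upg}; at that point you are essentially reproducing the paper's proof.
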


In the case of two-generator one-relator groups $G$ with $b_1(G) = 2$, the $L^2$-torsion polytope has been studied by Friedl--Tillmann~\cite{FriedlTillmann2015}. They established a close connection between $P_{L^2}(G) := P_{L^2}(EG; G)$ and the Bieri--Neumann--Strebel invariant $\Sigma(G)$. We prove similar results in our setting:

\begin{mainthm}[\ref{main thm BNS}]
        Let $g \colon F_2\to F_2$ be a monomorphism and let $G = F_2*_g$ be the associated descending HNN extension. Given $\phi \in\Hom(G,\R) \s- \{0\}$ such that $-\phi$ is not the epimorphism induced by $F_2 \ast_g$, there exists an open neighbourhood $U$ of $[\phi]$ in $S(G)$ and an element $d\in\D(G)^\times$ such that:
        \begin{enumerate}
                \item The image of $d$ under the quotient maps
                        \begin{equation*}
                                \D(G)^\times \to \D(G)^\times / [\D(G)^\times,\D(G)^\times] \cong K_1^w(\Z G) \to \Wh^w(G)
                        \end{equation*}
                        is $-\tor_u(G)$. In particular $P_{L^2}(G) = P(d)$ in $\Pol_T(H_1(G)_f)$.
                \item For every $\psi, \psi' \in\Hom(G,\R) \s- \{0\}$ which satisfy $[\psi], [\psi'] \in U$ and are $d$-equivalent, we have $[-\psi] \in \Sigma(G)$ if and only if $[-\psi'] \in \Sigma(G)$.
                 \end{enumerate}
\end{mainthm}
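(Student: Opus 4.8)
The plan is to use the Friedl--L\"uck machinery for descending HNN extensions of free groups together with the Linnell/Atiyah package that produces a Hughes-free division ring $\D(G)$ in which the relevant chain complexes become invertible. Since $G = F_2 \ast_g$ is a descending HNN extension of a finitely generated free group, $G$ is locally indicable (or at least satisfies the Atiyah Conjecture with the appropriate coefficients), so $\Z G$ embeds in its Ore-type (Hughes-free) division ring $\D(G)$, and $K_1^w(\Z G)$ is identified with $\D(G)^\times/[\D(G)^\times, \D(G)^\times]$. The two-generator one-relator-like presentation of $G$ (from the HNN structure: generators $x_1, x_2, t$ and relations $t x_i t^{-1} = g(x_i)$) gives a free $\Z G$-resolution of $\Z$ of length two, whose boundary map $\partial_2$ is a $2\times 3$ Fox-Jacobian matrix (or a $1\times 1$ matrix after using the HNN normal form to eliminate one generator, reducing to a Wirtinger-style $1\times 1$ presentation matrix). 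The first task is therefore to produce from this resolution an explicit element $d \in \D(G)^\times$ whose class is $-\tor_u(G)$; concretely $d$ is a nonzero entry, or a Schur-complement/quotient of maximal minors, of the Fox matrix over $\D(G)$. This is where the hypothesis that $-\phi$ is \emph{not} the canonical HNN epimorphism is used: it guarantees that the relevant square submatrix of the Fox matrix is invertible over $\D(G)$, so one can actually solve for $d$.

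Next I would bring in the polytope homomorphism $\P \colon \Wh^w(G) \to \Pol_T(H_1(G)_f)$ and its factorisation through $\D(G)^\times$: any $d \in \D(G)^\times$ has a well-defined Newton polytope $P(d) \subset H_1(G)_f \otimes \R$ (this is the content of the Friedl--L\"uck construction, valid because $\D(G)$ is Hughes-free so elements have a well-defined "support" in $H_1(G)_f$ via the canonical map $G \to H_1(G)_f$, extended multiplicatively). Then $P(d) = \P(-\tor_u(G)) = P_{L^2}(G)$, which is part (1). For part (2) the point is to relate membership in the BNS invariant $\Sigma(G)$ to the combinatorics of $P(d)$ near a vertex. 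The key observation is that for a character $\psi$, $[-\psi] \in \Sigma(G)$ is detected by whether $-\psi$ evaluated on $d$ "sees a vertex" of $P(d)$, i.e. whether the face of $P(d)$ on which $-\psi$ is maximised is a single point; equivalently, whether the leading term of $d$ with respect to the ordering induced by $-\psi$ is a monomial (a unit times a group element) rather than a genuinely longer sum. This is essentially the Bieri--Strebel / Brown criterion repackaged: $\Sigma(G)$ is governed by ascending/descending HNN structures over finitely generated groups, and for these one-relator-type groups that finiteness is read off from the Newton polytope of the single defining Fox-derivative element $d$.

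So the concrete strategy for (2) is: choose $\phi$ as in the statement, let $v$ be the vertex (or minimal face) of $P(d) = P_{L^2}(G)$ at which $-\phi$ is maximised, and let $U$ be the open cone of characters $\psi$ whose maximising face of $P(d)$ is contained in that same face --- equivalently the open star of $v$ in the polar/normal fan of $P(d)$, intersected with the sphere $S(G)$. Two characters $\psi,\psi'$ with $[\psi],[\psi'] \in U$ are by definition "$d$-equivalent" when they pick out the same face of $P(d)$; I would then show that this face being a point is exactly the condition that makes $\ker(-\psi)$, after passing to the HNN normal form, be finitely generated over the "large" subgroup, which by the Bieri--Neumann--Strebel characterisation (and its HNN refinement for $\Sigma$) is equivalent to $[-\psi] \in \Sigma(G)$ --- and crucially this condition depends only on the face, hence only on the $d$-equivalence class, giving the "if and only if" across $U$. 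The local nature is essential and must be emphasised: globally different vertices of $P(d)$ can fail to be comparable, which is why one only gets a statement on a neighbourhood $U$ and not a clean global formula.

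The main obstacle I anticipate is step one done \emph{honestly over the division ring}: passing from the abstract statement "$\tor_u(G)$ is represented by some unit in $K_1^w$" to a genuine \emph{explicit} $d \in \D(G)^\times$ with the right Newton polytope requires controlling how the Fox-Jacobian matrix of the HNN presentation behaves under the embedding $\Z G \hookrightarrow \D(G)$ --- in particular showing the relevant minor is nonzero precisely under the stated hypothesis on $-\phi$, and that its Dieudonn\'e determinant class matches $-\tor_u$ up to the expected sign and the ambiguity by $\pm g$ for $g \in G$ (which vanishes in $\Wh^w(G)$ but not in $\D(G)^\times$). Tied to this is verifying that the Newton polytope map is well-defined and additive on the nose for these elements, i.e. that $d$ has no "zero divisors hidden in the support" --- this is where Hughes-freeness of $\D(G)$, rather than a generic Ore localisation, does the real work. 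Everything after that (the fan/normal-cone bookkeeping and the translation into the BNS criterion) is essentially combinatorial geometry plus a citation to the Bieri--Strebel HNN criterion, so it should go through routinely once the division-ring computation is pinned down.
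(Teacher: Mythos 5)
Your overall shape is right in places (an explicit $d$ coming from the Fox matrix, with $P(d)=P_{L^2}(G)$ via \cref{invariants for fbc}, and the idea that BNS membership is read off from the $\psi$-minimal part of $d$), but the central step of your part (2) rests on a false equivalence. You claim that $[-\psi]\in\Sigma(G)$ holds precisely when the face of $P(d)$ selected by $\psi$ is a single vertex, ``equivalently'' when the leading term of $d$ is a monomial. These two conditions are not equivalent, and the first one does not detect $\Sigma(G)$: the coefficients of $d$, viewed in $\D(K)\ast H_1(G)_f$ (or, for the actual criterion, of $\det^c_{\D(G)}(A)\in\Z G\subseteq\widehat{\Z G}_\psi$), live over $\D(K)$ resp.\ $\Z K$, so the $\psi$-minimal part can be supported at a single vertex of the Newton polytope and still fail to be of the form $\pm z$ with $z\in G$ (e.g.\ have leading coefficient $2$ or $1+k$ with $k\in K$). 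The correct criterion, which is what the paper uses, is Sikorav's criterion (\cref{bns criterion}) reduced via \cref{invariants for fbc}(4) to invertibility of $A(g;\S,s)$ over $\widehat{\Z G}_\psi$, then via local indicability and Higman's theorem (\cref{invertible,G is locally indicable}) to the condition $\mu_\psi(\det^c_{\D(G)}(A))=\pm z$; the bridge that lets one compare the Dieudonn\'e determinant over $\D(G)$ with Novikov invertibility is the embedding of both $\D(G)$ and $\widehat{\Z G}_\psi$ into the Malcev--Neumann skew field $\mathfrak{F}(G,\psi)$, a step your proposal omits entirely. If your vertex criterion were true you would obtain a complete polytope computation of $\Sigma(G)$, which the paper explicitly does not claim in general (that is why Friedl--Tillmann need \emph{markings}, and why the clean statement ``$[\phi]\in\Sigma(G)$ iff $F_\phi(P_{L^2}(G))=0$'' is proved only for UPG automorphisms in \cref{main corollary}); the theorem you are asked to prove is only the local constancy of BNS membership on $d$-equivalence classes, which follows because $d$-equivalence forces the equality of the actual leading terms $\mu_\psi(d)=\mu_{\psi'}(d)$ as elements (\cref{polytope and minimal}), not merely of their supports.

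Two further points where the proposal diverges from what is needed. First, your neighbourhood $U$ (the open star of a vertex in the normal fan of $P(d)$) is not what makes the argument work: the role of $U$ is to guarantee that one fixed reduction is valid for all characters in it, i.e.\ that the same entry ($y-1$, or $1-t\fox{g(y)}{y}$ in the case $F_2\leqslant\ker\phi$) is invertible over $\widehat{\Z G}_\psi$ and that the same $d=\det^c_{\D(G)}(A)\cdot(s-1)^{-1}$ is the relevant element; the paper achieves this with the explicit sets $\{[\psi]\mid\psi(x)>0,\ \psi(y)>0\}$, resp.\ $\{[\psi]\mid\psi(t)>|\psi(z)|,\ z\in\supp\fox{g(y)}{y}\}$, and your fan-theoretic $U$ does not address this uniformity. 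Second, the hypothesis that $-\phi$ is not the induced epimorphism is not used to make a minor of the Fox matrix invertible over $\D(G)$ (that holds always, cf.\ \cref{non-degenerate}); it is used in the case $F_2\leqslant\ker\phi$ to conclude $\phi(t)>0$, so that the reduction above is available. As written, the proposal would need these gaps repaired before it proves the stated theorem.
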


The $d$-equivalence is induced by the Newton polytopes associated to $d$ in a simple way (see \cref{polytope equivalent}).
As a corollary, we show (in \cref{finite bns}) that the BNS invariant for $G = F_2*_g$ as above has finitely many connected components.

Over arbitrary rank we can strengthen this result again for UPG automorphisms:

\begin{maincor}[\ref{main corollary}]
	Let $G = F_n\rtimes_g \Z$ with $n\geq 2$ and $g$ a UPG automorphism. Let $\phi\in H^1(G;\R)$. Then $[\phi]\in \Sigma(G)$ if and only if $F_\phi(P_{L^2}(G))=0$ in $\Pol_T(H_1(G)_f)$.
\end{maincor}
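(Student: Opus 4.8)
The plan is to compute $P_{L^2}(G)$ and $\Sigma(G)$ explicitly and compare them. By \cref{def:upg} and the Kolchin-type normal form for UPG automorphisms, fix a basis $x_1,\dots,x_n$ of $F_n$ for which the free factors $F^i := \langle x_1,\dots,x_i\rangle$ are $g$-invariant and $g(x_i) = u_i x_i v_i$ with $u_i, v_i\in F^{i-1}$ (so $u_1 = v_1 = 1$). The presentation $G = \langle x_1,\dots,x_n, t \mid r_i := t x_i t^{-1} g(x_i)^{-1}\rangle$ has a presentation complex that is a finite model of $BG$, and I would compute $-\tor_u(G)$ from the resulting based chain complex over $\D(G)$. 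Deleting the Fox column of $t$ (legitimate since $t$ has infinite order, so $t - 1\in\D(G)^\times$) leaves the matrix $(\partial r_i/\partial x_j)_{i,j}$ over $\Z G$, which is lower triangular because $u_i, v_i\in F^{i-1}$ and whose $i$-th diagonal entry is $\partial r_i/\partial x_i = t - u_i$ in $\Z G$ (a short Fox-calculus computation using $t x_i t^{-1} = g(x_i)$). Absorbing the factor $(t-1)^{-1}$ coming from $H_0(BG;\Z) = \Z$ into the $i=1$ diagonal entry $t-1$, this gives $-\tor_u(G) = \big[\prod_{i=2}^n(t - u_i)\big]$ in $\Wh^w(G)$, whence, applying the polytope homomorphism,
\[
 P_{L^2}(G) \;=\; \sum_{i=2}^{n}\,[\,\bar{u_i},\ \bar t\,]\ \in\ \Pol_T(H_1(G)_f),
\]
the Minkowski sum, up to translation, of the segments from $\bar{u_i}$ to $\bar t$ in $H_1(G)_f\otimes\R$; passing to seminorms, this also yields \cref{main corollary 2}.

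Since the face map is additive on Minkowski sums, $F_\phi(P_{L^2}(G)) = \sum_{i=2}^n F_\phi([\,\bar{u_i},\bar t\,])$, and the $i$-th summand is a single point exactly when $\phi$ is non-constant on the segment $[\,\bar{u_i},\bar t\,]$ (note $\bar{u_i}\neq\bar t$, since $\bar t$ maps onto a generator of the $\Z$-quotient of $H_1(G)$). Hence
\[
 F_\phi(P_{L^2}(G)) = 0 \;\Longleftrightarrow\; \phi(\bar t - \bar{u_i})\neq 0 \ \text{ for all } i = 2,\dots,n,
\]
i.e.\ $[\phi]$ lies outside the hyperplane arrangement $\bigcup_{i\geq 2}\{\,\psi: \psi(\bar t - \bar{u_i}) = 0\,\}$, equivalently in an open chamber of the normal fan of the zonotope $P_{L^2}(G)$. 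It thus remains to prove that this union of open chambers is exactly $\Sigma(G)$.

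For the inclusion $\Sigma(G)\subseteq\{F_\phi(P_{L^2}(G)) = 0\}$ I would use that a class in $\Sigma(G)$ realises $G$ as an ascending HNN extension with finitely generated base in the direction $\phi$, forcing a representative of $-\tor_u(G)$ to have a $\phi$-monic leading term and hence $F_\phi$ to be a vertex; alternatively, if $\phi(\bar t - \bar{u_i}) = 0$ for some $i$ one shows directly that $\ker\phi$ fails the required finite generation over the monoid $\{\phi\geq 0\}$, using that a \emph{polynomially growing} automorphism cannot generate an infinitely generated free subgroup from the one-sided iterates of any finite set (the unipotent filtration confines such iterates to a bounded window of a Schreier basis). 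For the reverse inclusion I would show that every open chamber meets $\Sigma(G)$: the chamber of $\pm\pi$, where $\pi(t) = 1$ and $\pi|_{F_n} = 0$ so that $\pi(\bar t - \bar{u_i}) = 1$ for all $i$, is immediate since $\ker(\pm\pi) = F_n$ is finitely generated, while a general open chamber is handled via the local description of $\Sigma(G)$ furnished by the rank-$n$ analogue of \cref{main thm BNS}: for a representative $d = \prod_{i\geq 2}(t - u_i)$ of $-\tor_u(G)$, the $d$-equivalence classes are precisely these open chambers and their faces, so membership of $[-\psi]$ in $\Sigma(G)$ is locally, hence (as each chamber is connected) globally, constant on each chamber, reducing the task to producing a single seed class per chamber — an ascending HNN splitting of $G$ adapted to the sign pattern of the chamber and built from the filtration $F^1\subset\dots\subset F^n$. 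Together with the previous display this gives $[\phi]\in\Sigma(G)\;\Longleftrightarrow\; F_\phi(P_{L^2}(G)) = 0$.

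The step I expect to be hardest is this last one: for a \emph{mixed} chamber the required stable letter of the ascending HNN structure is neither $\geq 0$ nor $\leq 0$ on $F_n$, so producing it forces one to use the precise triangular form of a UPG automorphism; one must also accommodate irrational $\phi$ (for instance by openness of $\Sigma$ together with the local theorem), and the local $\Sigma$-statement of \cref{main thm BNS} must first be upgraded from $F_2$ to arbitrary rank, which should not be serious for UPG $G$ since $\D(G)$ is then a well-behaved skew field. A secondary technical point is fixing the normalisation of $-\tor_u(G)$, in particular the cancellation of the $(t-1)$-factor.
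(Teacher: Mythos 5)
Your computation of the polytope and the zonotope/normal-fan analysis are in the spirit of the paper, but the proposal has two genuine gaps, one of which is the heart of the matter. First, your starting point — a global Kolchin-type normal form in which a single UPG automorphism preserves a full flag of free factors $F^1\subset\dots\subset F^n$ spanned by a basis, with $g(x_i)=u_ix_iv_i$, $u_i,v_i\in F^{i-1}$ — is asserted without proof or reference, and it is not available in this strength: the train-track theory only yields the dichotomy of \cref{splitting for upg} (an invariant free splitting $B_1\ast B_2$, or $B_1\ast\langle x\rangle$ with $h(x)=xu$), and only after changing $g$ within its \emph{outer} class; when one recurses into the factors the inner corrections need not patch to a global inner automorphism, which is exactly why the paper does not triangularise the Fox matrix but instead proves \cref{l2-torsion of upg} by induction over this dichotomy, computing $\tor_u(G;p_k)$ in $\Wh^w(\Gamma_k)$ case by case. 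Your diagonalisation would have to be replaced by that inductive argument (the resulting polytope, a Minkowski sum of segments $P(1-t_i)$ with $t_i\in G\smallsetminus F_n$, agrees with what you find).

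The more serious gap is the identification of $\Sigma(G)$ with the union of open chambers. In the paper this is not proved but \emph{imported}: \cref{thm CashenLevitt} (Cashen--Levitt) states $\Sigma(G)=-\Sigma(G)=\{[\phi]:\phi(t_i)\neq 0 \text{ for all } i\}$, and the whole content of \cref{l2-torsion of upg} is that the \emph{same} elements $t_i$ appear in the torsion formula, after which the corollary is a two-line face argument. You propose instead to re-derive this description: the forward inclusion via a monicity/ascending-HNN argument that is only sketched, and the reverse inclusion via a rank-$n$ analogue of \cref{main thm BNS} plus the construction of a seed class in every chamber. But \cref{main thm BNS} is proved in the paper only for $F_2$, and its proof genuinely uses the explicit $2\times 2$ Dieudonn\'e determinant formula and a single row reduction, so the ``upgrade to arbitrary rank'' is not a routine step; moreover even granting it, its conclusion is local (constancy on $d$-equivalence classes inside a neighbourhood $U$ of a fixed class), so promoting it to constancy on whole chambers, and then producing an adapted descending HNN splitting for each mixed chamber, amounts to re-proving Cashen--Levitt's theorem — which you acknowledge is the hardest step and do not carry out. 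You would also need $\Sigma(G)=-\Sigma(G)$ (the paper's BNS criteria concern $[-\phi]$, the corollary concerns $[\phi]$), which again comes from Cashen--Levitt. As it stands the proposal establishes (modulo the normal-form issue) only that $F_\phi(P_{L^2}(G))=0$ cuts out the chamber complement, not that this set is $\Sigma(G)$.
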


The face map $F_\phi$ is defined in \cref{face map}. This theorem is motivated by Cashen-Levitt's computation of the BNS invariant of such groups \cite[Theorem 1.1]{CashenLevitt2014}. 

\subsection*{Acknowledgements}
The first-named author was supported by GRK 1150 `Homotopy and Cohomology' funded by the DFG, the Max Planck Institute for Mathematics, and the Deutsche Telekom Stiftung.

The second-named author was supported by the ERC grant `Moduli' of Ursula Hamenst\"adt, and the SFB 701 `Spectral Structures and Topological Methods in Mathematics' of the Bielefeld University.

The authors would like to thank Stefan Friedl and Wolfgang L\"uck for helpful discussions, and the organisers of the `Manifolds and Groups' conference on Ventotene, where some of this work was conducted. The second-named author would also like to thank \L ukasz Grabowski.

\tableofcontents

\section{Preliminaries}\label{chapter: preliminaries}

\subsection{Descending HNN extensions}

\begin{dfn}
Let $G$ be a group, $H \leqslant G$ a subgroup, and $g \colon H \to H$ a monomorphism. The \emph{HNN extension associated to $g$} is the quotient of the free product of $G$ with $\langle t \rangle \cong \Z$ by
\[ \langle \! \langle \{ t^{-1}xt g(x)^{-1} \mid x \in H \} \rangle \! \rangle  \]
The element $t$ is called the \emph{stable letter} of the HNN extension. The HNN extension is called \emph{descending} if $H = G$. The natural epimorphism $G \ast_g \to \Z$, sending $t$ to 1 with $G$ in its kernel, is called the \emph{induced epimorphism}.
\end{dfn}

\begin{rmk}
Note that when $g \colon G \to G$ is an isomorphism, then $G \ast_g = G \rtimes_g \Z$ is a semi-direct product, or a $G$-by-$\Z$ group (since extensions with a free quotient always split).
\end{rmk}

In the final sections of this paper we will focus on descending HNN extensions $G = F_2\ast_g$. The following (well-known) result illustrates that this is somewhat less restrictive than it might seem.

\begin{prop}
Let $g \colon F_2 \to F_2$ be a monomorphism which is not onto. There exists $N \in \N$ such that for every $n \geqslant N$ there exists a monomorphism $g_n \colon F_n \to F_n$ such that
\[ F_2 \ast_g \cong F_n\ast_{g_n} \]
\end{prop}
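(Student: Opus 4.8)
The plan is to realise $G = F_2 \ast_g$ with a free base of any prescribed larger rank by adding, for each extra rank, a redundant generator via a Tietze transformation, and to make these additions compatible with the HNN structure by drawing the new defining relators from a carefully chosen free subgroup of $F_2$.

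First I would record the only structural input needed. Since $g$ is injective, $H := g(F_2)$ is free of rank $2$; since $g$ is not onto, $H$ is a proper subgroup, hence of infinite index in $F_2$ (a rank-$2$ subgroup of finite index $d$ would have rank $d+1$ by Nielsen--Schreier, forcing $d=1$). The key auxiliary statement I would then prove (or quote) is: a finitely generated, infinite-index subgroup $H$ of a finitely generated free group $F$ is a free factor of subgroups of $F$ of every larger finite rank, i.e. for each $m \geqslant \rk H$ there is $L$ with $H \leqslant L \leqslant F$, $\rk L = m$, and $L = H \ast K$ with $K$ free. I would prove this with Stallings' folded graphs: the core graph $\Delta$ of $H$ is a finite connected graph immersing into the rose $R$ with $\pi_1(R) = F$; because $[F:H] = \infty$, $\Delta$ is not a covering of $R$, so some vertex $v$ is missing an incident edge in some edge-direction. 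Grafting onto $v$ a "comb" --- a new edge from $v$ to a fresh vertex $u_1$ (labelled by the missing direction), a loop at $u_1$, then an edge $u_1 \to u_2$, a loop at $u_2$, and so on for $m - \rk H$ loops, choosing all new labels so that no vertex acquires two equally-labelled incident edges --- yields a folded graph $\Delta'$ that deformation retracts onto $\Delta$ with $m - \rk H$ circles wedged on along a tree. Hence $\pi_1(\Delta')$ still embeds into $F$, and equals $H \ast F_{m-\rk H}$; take $L = \pi_1(\Delta')$.

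Granting this, fix $n \geqslant 2$ and a basis $a,b$ of $F_2$. Using the auxiliary statement with $m = n$, obtain $L \leqslant F_2$ of rank $n$ in which the basis $\{g(a), g(b)\}$ of the free factor $H$ extends to a basis $\{g(a), g(b), w_1, \dots, w_{n-2}\}$ of $L$. Set $F_n = \langle a, b, c_1, \dots, c_{n-2}\rangle$ and define $g_n \colon F_n \to F_n$ by $g_n(a) = g(a)$, $g_n(b) = g(b)$, $g_n(c_i) = w_i$; this is injective because its image is the free group $L$ with the displayed basis. It remains to identify $F_n \ast_{g_n}$ with $G$. Its presentation is
\[
 \langle a,b,c_1,\dots,c_{n-2},t \mid t^{-1}at = g(a),\ t^{-1}bt = g(b),\ t^{-1}c_it = w_i \ (1 \leqslant i \leqslant n-2)\rangle,
\]
and since each $w_i$, like $g(a)$ and $g(b)$, is a word in $a,b$ alone, the relation $t^{-1}c_it = w_i$ reads $c_i = t w_i t^{-1}$ and lets one eliminate the generator $c_i$. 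Performing these $n-2$ Tietze transformations leaves exactly $\langle a,b,t \mid t^{-1}at = g(a),\ t^{-1}bt = g(b)\rangle = F_2 \ast_g = G$, so $F_n \ast_{g_n} \cong G$ and $N = 2$ works.

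The main obstacle is the auxiliary statement on extending $H$ to free factors of larger rank. One cannot simply take the $w_i$ to be arbitrary elements outside $H$, or high powers of such: for instance $\langle a, bab^{-1}, b\rangle = F_2$ has rank $2$, not $3$, even though $b \notin \langle a, bab^{-1}\rangle$ and no power of $b$ lies in that subgroup. The geometric fact that an infinite-index subgroup's core graph has a missing incident edge, at which petals can be grafted on without creating folds, is precisely what makes the construction go through; the rest is routine manipulation of presentations.
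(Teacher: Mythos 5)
Your proof is correct. The overall skeleton matches the paper's: both arguments realise $g(F_2)$ as a free factor of a rank-$n$ subgroup $L$ of $F_2$, factor $g$ through $F_n \cong L$ to define the monomorphism $g_n$, and then identify $F_n\ast_{g_n}$ with $F_2\ast_g$ (you via Tietze elimination of the redundant generators $c_i$, the paper via an explicit pair of mutually inverse homomorphisms $h, h'$ --- the same computation in different clothing). Where you genuinely diverge is in the key lemma producing $L$: the paper invokes Marshall Hall's theorem, so that $g(F_2)$ is a free factor of a \emph{finite-index} subgroup $F_N \leqslant F_2$, and then appeals to Stallings' proof to get every rank $n \geqslant N$; you instead use only that $g(F_2)$ has infinite index (correctly deduced from non-surjectivity via the Nielsen--Schreier index formula) and build $L$ by hand, grafting a folded comb of loops onto a missing direction of the Stallings core graph. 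Your route is more self-contained --- it replaces the citation of Hall's theorem by an elementary folding argument --- and it yields the sharper conclusion that $N=2$ works, i.e.\ every $n \geqslant 2$ occurs, whereas the paper only asserts some $N$ (its finite-index subgroups force $N$ to be at least the rank coming from Hall's theorem). The paper's route is shorter on the page precisely because it outsources the free-factor statement. Your cautionary example $\langle a, bab^{-1}, b\rangle = F_2$, showing that the $w_i$ cannot be chosen arbitrarily, correctly identifies why the foldedness of the grafted graph is the essential point.
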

\begin{proof}
We start by observing that Marshall Hall's theorem~\cite{Hall1949} tells us that there exists $N \in \N$ such that $g(F_2)$ is a free factor of a finite index subgroup $F_N$ of $F_2$. In fact it is easy to see (using the proof of Stallings~\cite{Stallings1983}) that this statement holds for any $n \geqslant N$ (here we are using the fact that $g$ is not onto; otherwise $N=2$ and we cannot take larger values of $n$).

Now $g$ factors as
\[
\xymatrix{ F_2 \ar[r]^a & F_n \ar[r]^b & F_2 }
\]
where $a$ embeds $F_2$ as a free factor, and $b$ is an embedding with image of finite index.
We let $g_n = a \circ b \colon F_n \to F_n$.

Next we construct the desired isomorphism. Let $t$ (resp. $s$) denote the stable letter of $F_2 \ast_g$ (resp. $F_n \ast_{g_n}$). Let $F_2 = \langle x_1, x_2 \rangle$ and $F_n = \langle x_1, \dots, x_n \rangle$; with this choice of generators the map $a$ becomes the identity.

Consider $h \colon F_2 \ast_g \to F_n\ast_{g_n}$ defined by
\[ h(x_i) = x_i \textrm{ and } h(t) = s\]
It is a homomorphism since
\[ t^{-1} x_i t = b(x_i)\]
and
\[ h(t^{-1}) h(x_i) h(t) = s^{-1} x_i s = b(x_i) = h(b(x_i))\]

Now consider $h' \colon F_n\ast_{g_n} \to  F_2 \ast_g$ induced by
\[ h'(x_i) = tb(x_i)t^{-1} \textrm{ and } h'(s) = t\]
It is clear that $h'$ is the inverse of $h$.
\end{proof}

\begin{rmk}
 Of course there is nothing special about $F_2$ in the above result. The proof works verbatim when $F_2$ is replaced by $F_m$ with $m \geqslant 2$.
\end{rmk}

\subsection{Dieudonn\'e determinant}\label{dieudonne}

While working with the universal $L^2$-torsion, the Dieudonn\'e determinant for matrices over skew-fields is of fundamental importance. We review here its definition and fix a so-called \emph{canonical representative}.

\begin{dfn}
Given a ring $R$, we will denote its group of units by $R^\times$.
\end{dfn}

\begin{dfn}[Dieudonn\'e determinant]
Given a skew field $\D$ and an integer $n$, let $M_n(\D)$ denote the ring of $n \times n$ matrices over $\D$. The \emph{Dieudonn\'e determinant} is a multiplicative map
\[ \det_\D \colon M_n( \D ) \to \D^\times / [\D^\times, \D^\times] \cup \{ 0 \} \]
defined as follows: First we construct its \emph{canonical representative}
\[\detc \colon M_n(\D) \to \D\]
and then set $\det_\D(A)$ to be image of $\detc(A)$ under the obvious map
\[ \D \to \D^\times / [\D^\times, \D^\times] \cup \{ 0 \}\]

The canonical representative is defined inductively:
\begin{itemize}
\item for $n=1$ we have $\detc((a_{11})) = a_{11}$;
\item if the last column of $A$ contains only zeros we set $\detc(A) = 0$;
\item for general $n$ (and a matrix $A$ with non-trivial last column) we first identify the bottommost non-trivial element in the last column of $A$. If this is $a_{nn}$ we take $P =\id$; otherwise, if the element is $a_{in}$, we take $P$ to be the permutation matrix which swaps the $i^{th}$ and $n^{th}$ rows of $A$; in either case we have $PA = A' = (a'_{ij})$ with $a'_{nn} \neq 0$.
Now we define $B = (b_{ij})$ by
\[ b_{ij} = \left\{ \begin{array}{ccc} 1 & \textrm{if} & i=j  \\
                     0 & \textrm{if} & i\neq j \textrm{ and } j< n  \\
                    -a'_{in}{a'_{nn}}^{-1}  & \textrm{if} & i \neq j=n
                    \end{array} \right. \]
This way we have
\[ BPA = A'' = (a''_{ij}) \]
with $a''_{in} = 0$ for all $i \neq n$. Let us set $C$ to be the $(n-1)\times(n-1)$ matrix $C = (a''_{ij})_{i,j < n}$. We define
\[ \detc(A) =  \det P  \cdot \detc (C) \cdot a''_{nn} \]
\end{itemize}
\end{dfn}
Note that the canonical representative $\detc$ is not multiplicative, but the determinant itself is, as shown by Dieudonn\'e~\cite{Dieudonne1943}.

It is immediate from the definition that when $\D$ is a commutative field, then the Dieudonn\'e determinant agrees with the usual determinant.

\begin{prop}[Formula for square matrices]
        \[ \det^c_\D \left( \begin{array}{cc} a & b \\ c& d \end{array} \right) = \left\{ \begin{array}{ccl} ad - bd^{-1}cd & \textrm{if} & d \neq 0\\ -bc  & \textrm{if} & d = 0 \end{array} \right. \]
\end{prop}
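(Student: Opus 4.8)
The plan is to verify the formula by a direct, finite computation: since $n=2$ there is no genuine recursion to control, only a single application of the inductive clause of the definition of $\detc$, and the two cases in the statement correspond exactly to whether or not the bottom-right entry $d$ is the bottommost non-trivial entry of the last column. So I would simply run the definition through, splitting on $d \neq 0$ versus $d = 0$, and being careful that the "clearing" matrix $B$ is applied on the \emph{left} and is built from the entries of the last column (not the last row).

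First suppose $d \neq 0$. Then the last column of $A$ is non-trivial and its bottommost non-trivial entry is $a_{22} = d$, so $P = \id$ and $A' = A$. The matrix $B$ has $b_{12} = -a_{12}a_{22}^{-1} = -bd^{-1}$ and is otherwise the identity, i.e. $B = \left(\begin{smallmatrix} 1 & -bd^{-1} \\ 0 & 1 \end{smallmatrix}\right)$, and one computes $BPA = \left(\begin{smallmatrix} a - bd^{-1}c & 0 \\ c & d \end{smallmatrix}\right) = A''$. Hence $C = (a - bd^{-1}c)$, $a''_{22} = d$, and since $\det P = 1$ and $\detc(C) = a - bd^{-1}c$ by the base case, the defining formula gives $\detc(A) = 1 \cdot (a - bd^{-1}c) \cdot d = ad - bd^{-1}cd$, as required.

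Now suppose $d = 0$. If also $b = 0$, the last column vanishes and $\detc(A) = 0 = -bc$, consistent with the claim. If $b \neq 0$, the bottommost non-trivial entry of the last column is $a_{12} = b$, so $P$ is the transposition of the two rows, with $\det P = -1$, and $A' = PA = \left(\begin{smallmatrix} c & 0 \\ a & b \end{smallmatrix}\right)$; the top-right entry of $A'$ is already $0$, so $B = \id$, $A'' = A'$, giving $C = (c)$ and $a''_{22} = b$, hence $\detc(A) = (-1)\cdot c \cdot b = -cb$. The computation itself is routine; the only points needing care are the non-commutativity (placing $d^{-1}$, $c$, $d$ in the correct order in the first case) and the sign from the row swap in the second. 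I would flag one subtlety worth double-checking against the authors' intent: the recursion as written produces $-cb$ rather than $-bc$ in the degenerate case. Since $bc$ and $cb$ are conjugate in $\D^\times$, the two agree in $\D^\times/[\D^\times,\D^\times]$, so this is immaterial for the Dieudonné determinant proper and for all downstream applications; but at the strict level of the canonical representative $\detc \colon M_2(\D) \to \D$ the value is $-cb$.
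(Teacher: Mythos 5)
Your computation is correct and is exactly the direct unwinding of the definition that the paper leaves implicit (the proposition is stated without proof as an immediate consequence of the construction of $\detc$). Your flag about the degenerate case is also accurate: with the stated recursion the canonical representative is $-cb$ rather than $-bc$ when $d=0$ and $b\neq 0$, which only agrees with the paper's formula after passing to $\D^\times/[\D^\times,\D^\times]$, i.e.\ at the level of the Dieudonn\'e determinant itself, where it is all that is used later.
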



\subsection{Crossed products}

\begin{dfn}[Crossed product group ring]\label{def crossed product}
	Let $R$ be a ring and $G$ a group together with maps of sets $\phi \colon G \to \Aut(R)$ and $\mu \colon G\times G\to R^\times$ such that
	\begin{align*}
	\phi(g) \circ \phi(g') &= c(\mu(g,g'))\circ \phi(gg')\\
	\mu(g,g')\cdot \mu(gg',g'') &= \phi(g)(\mu(g', g''))\cdot \mu(g,g'g'')
	\end{align*}
	where $c\colon R^\times \to\Aut(R)$ maps an invertible element $r$ to the conjugation by $r$ on the left. Then the \emph{crossed product group ring} $R\ast G$ is the free left $R$-module with basis $G$ and multiplication induced by the rule
	\begin{equation}\label{twisted convolution}
	(\kappa g) \cdot (\lambda h) = \kappa \phi(g)(\lambda) \mu(g,h) gh
	\end{equation}
	for any $g,h \in G$ and $\kappa, \lambda \in R$.
	The conditions on $\mu$ and $\phi$ ensure the associativity of the multiplication, so that $R\ast G$ is indeed a ring.
\end{dfn}

Note that when $\phi$ and $\mu$ are trivial, we obtain the usual group ring $RG$.

\begin{ex}\label{crossed product of extension}
	Crossed product group rings appear naturally: Given an extension of groups
	\[ 1 \to K \to G \to Q \to 1\]
	we can identify $R G \cong (R K)\ast Q$, where the structure maps $\phi$ and $\mu$ are defined as follows: Let $s\colon Q\to G$ be a set-theoretic section of the given epimorphism $G\to Q$. Define
	\[\phi(q)\left(\sum_{k\in K}a_k\cdot k\right) = \sum_{k\in K} a_k\cdot s(q)ks(q)^{-1}\]
	and
	\[\mu(q,q') = s(q)s(q')s(qq')^{-1} \in K\]
	The isomorphism $(R K)\ast Q \to RG$ is given by
	\[
	\sum_{q\in Q}\lambda_q\cdot q\mapsto \sum_{q\in Q} \lambda_q\cdot s(q)
	\]
%

A case of particular interest occurs when $Q = \Z$. Under this assumption the section $s$ can be chosen to be a group homomorphism so that $\mu$ is trivial. The crossed product ring $(R K) \ast Q$ is then a ring of \emph{twisted Laurent polynomials} denoted $(R K)_t[z^{\pm}]$, where the twisting is determined by the automorphism $t = \phi(1)$. We will think of the variable $z$ as $s(1)$.
\end{ex}

\begin{dfn}
Given an element $x = \sum_{h\in G} \lambda_h \cdot h \in R \ast G$ we define its \emph{support} to be
\[ \supp(x) = \{h \in G \mid \lambda_h \neq 0 \} \]
Note that the support is a finite subset of $G$.
\end{dfn}

\subsection{Ore localisation}

We briefly review non-commutative localisation.

\begin{dfn}
 Let $R$ be a unital ring without zero-divisors, and let $T\subseteq R$ be a subset containing $1$ such that for every $s,t\in T$ we also have $st\in T$. Then $T$ satisfies the \emph{(left) Ore condition} if for every $r\in R$, $t\in T$ there are $r'\in R$, $t'\in T$ such that $t'r = r't$.

 One can then define a ring $T^{-1}R$, called the \emph{Ore localisation}, whose elements are  fractions $t^{-1}r$ with $r\in R, t\in T$, subject to the usual equivalence relation. There is an obvious ring monomorphism $R\to T^{-1}R$.
\end{dfn}

One instance of the Ore localisation will be of particular interest in this paper. If $G$ is an amenable group, $\D$ a skew field and $\D * G$ a crossed product which is a domain, then a result of Tamari \cite{Tamari1957} shows that $\D * G$ satisfies the left (and right) Ore condition with respect to the non-zero elements in $\D * G$. This applies in particular to the case where $G$ is finitely generated free-abelian. (Note that for untwisted group algebras $\K G$ without non-trivial zero divisors, the Ore condition for $\K G$ is equivalent to amenability of $G$ by a result of Bartholdi and the second-named author~\cite{Bartholdi2016}.)

Throughout the paper, we will only take the Ore localisation with respect to all non-zero elements of a ring.

\subsection{The Atiyah Conjecture and \texorpdfstring{$\D(G)$}{D(G)}}
In this section we review techniques which were originally developed for proving the Atiyah Conjecture, but have meanwhile been shown to be fruitful on many other occasions.

Given a group $G$, let $L^2(G)$ to denote the complex Hilbert space with Hilbert basis $G$ on which $G$ acts by translation. We use $\Nc(G)$ to denote the \emph{group von Neumann algebra of $G$}, i.e. the algebra of bounded $G$-equivariant operators on $L^2(G)$. Associated to any $\Nc(G)$-module $M$ (in the purely ring-theoretic sense), there is a \emph{von Neumann dimension} $\dim_{\Nc(G)}(M)\in[0,\infty]$ (see \cite[Chapter 6]{Lueck2002}).

\begin{conj}[Atiyah Conjecture]
	Let $G$ be a torsion-free group. Given a matrix $A\in \Q G^{m\times n}$, we denote by $r_A: \Nc(G)^m\to\Nc(G)^n$ the $\Nc(G)$-homomorphism given by right multiplication with $A$. Then $G$ satisfies the Atiyah Conjecture if for every such matrix the number $\dim_{\Nc(G)}(\ker(r_A))$ is an integer.
\end{conj}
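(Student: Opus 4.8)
The statement as worded is Atiyah's conjecture on the integrality of $L^2$-Betti numbers, restricted to torsion-free groups (the \emph{strong Atiyah conjecture}), and in this generality it is a well-known open problem; so what follows is an account of the only systematic approach available, together with the point at which it breaks down. The guiding idea, due to Linnell, is to replace the analytic quantity $\dim_{\Nc(G)}(\ker r_A)$ by a $K$-theoretic invariant. One works inside the ring $\U(G)$ of (unbounded) operators affiliated to $\Nc(G)$, into which $\Q G$ embeds, and forms the \emph{division closure} $\D(G)$ of $\Q G$ in $\U(G)$ --- the smallest subring containing $\Q G$ that is closed under taking inverses of its units. The Murray--von Neumann dimension computing $\dim_{\Nc(G)}(\ker r_A)$ factors, up to an overall integer shift depending only on the size of $A$, through the composite $K_0(\D(G)) \to K_0(\U(G)) \to \R$, the last map being the dimension function. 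Hence the conjecture for $G$ reduces to two assertions: (i) $\D(G)$ is a skew field; and (ii) once $K_0(\D(G))$ is identified with $\Z$ via the rank, the image of $K_0(\Q G) \to K_0(\D(G))$ lands in $\Z$ --- and for torsion-free $G$ assertion (ii) is automatic granted (i).

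Accordingly the plan has three steps. First, establish (i): show that no nonzero element of the subring of $\U(G)$ generated by $\Q G$ under inversion is a zero divisor, equivalently that $\D(G)$ is a Hughes-free division ring over $\Q G$. Second, identify $K_0(\D(G))$ with $\Z$ through the rank function and check that a finitely generated projective $\Q G$-module has genuinely integral rank rather than a proper fraction --- torsion-freeness is exactly what rules out denominators here, since any such denominator would have to divide the order of a finite subgroup. Third, for any concrete group one must actually verify (i): this is where closure properties enter --- free groups satisfy (i), and Linnell's class (the smallest class of groups containing all free groups, closed under directed unions, and containing $G$ whenever $G$ has a normal subgroup in the class with elementary amenable quotient) is closed under the operations needed, so in particular every descending HNN extension of a finitely generated free group --- its base normally generating a directed union of free groups with quotient $\Z$ --- lies in the class, and the conjecture holds for all groups relevant to this paper.

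The main obstacle is Step three in full generality: property (i) is simply \emph{not} known for an arbitrary torsion-free group, and no general mechanism is available to produce it. The two known engines are Hughes-free embeddability --- verified so far only for locally indicable groups carrying suitable orderings, and for Linnell's class together with its enlargements by Schick, by Dodziuk--Linnell--Mathai--Schick--Yates, and by later authors --- and approximation by finite quotients (L\"uck approximation), which constrains the relevant spectral measures but does not by itself force the limiting dimensions to be integral. Torsion-free groups with no usable structure on their group ring --- Tarski monsters, torsion-free lacunary hyperbolic groups, various groups with pathological actions on trees --- lie outside the reach of both, and it is precisely this lack of any a priori algebraic handle on $\Q G$ that keeps the conjecture open. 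For the present paper the difficulty does not arise: only descending HNN extensions of free groups occur, for these $\D(G)$ is a skew field by the above, and the conjecture is available in exactly the form the later arguments use.
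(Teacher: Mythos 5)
This statement is a conjecture, not a theorem: the paper offers no proof of it, only the standard reduction (cited as L\"uck's Lemma 10.39, \cref{atiyah using D} in the paper) that for torsion-free $G$ the conjecture is equivalent to $\D(G)$ being a skew field, together with Linnell's closure properties to verify it for the groups actually used. Your write-up correctly treats the general statement as open and, for the descending HNN extensions of free groups that the paper needs, reproduces exactly the argument the paper invokes --- free groups satisfy the conjecture, the class is closed under directed unions (giving it for the locally free normal closure of $F_n$) and under extensions with elementary amenable quotient (giving it for $G$ itself) --- so your account matches the paper's treatment.
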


The class of groups for which the Atiyah Conjecture is known to be true is large. It includes all free groups, is closed under taking directed unions, as well as extensions with elementary amenable quotients. Infinite fundamental groups of compact connected orientable irreducible $3$-manifolds with empty or toroidal boundary which are not closed graph manifolds are also known to satisfy the Atiyah Conjecture. For these statements and more information we refer to \cite[Chapter 3]{FriedlLueck2015}.

\begin{dfn}\label{division closure}
	Let $R\subseteq S$ be a ring extension. Then the \emph{division closure} of $R$ inside $S$ is the smallest subring $D$ of $S$ which contains $R$, such that every element in $D$ which is invertible in $S$ is already invertible in $D$. We denote it by $\D(R\subseteq S)$.
\end{dfn}

Let $\U(G)$ denote the algebra of affiliated operators of $\Nc(G)$. This algebra is carefully defined and examined in \cite[Chapter 8]{Lueck2002}. Note that $\Q G$ embeds into $\Nc(G)$, and therefore $\U(G)$, as right multiplication operators. Let $\D(G)$ denote the division closure of $\Q G$ inside $\U(G)$.

The following theorem appears in \cite[Lemma 10.39]{Lueck2002} for the case where $\Q G$ is replaced by $\C G$ in the above definitions, but the proof also carries over to rational coefficients.

\begin{thm}
\label{atiyah using D}
	A torsion-free group satisfies the Atiyah Conjecture if and only if $\D(G)$ is a skew field.
\end{thm}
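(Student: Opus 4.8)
The plan is to prove \cref{atiyah using D} by establishing both implications through an analysis of the von Neumann dimension of kernels of matrices over $\Q G$, translated into a statement about when $\D(G)$ is a division ring. Throughout I would use the standard facts about $\U(G)$: it is a von Neumann regular ring, it is the Ore localisation of $\Nc(G)$ with respect to its non-zero-divisors, and for a torsion-free group $G$ one has $\dim_{\Nc(G)}(M) = \dim_{\U(G)}(\U(G) \otimes_{\Nc(G)} M)$ with the $\U(G)$-dimension behaving additively on short exact sequences and detecting the zero module.

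\medskip
\noindent\emph{($\Leftarrow$) Suppose $\D(G)$ is a skew field.} Given $A \in \Q G^{m \times n}$, I would factor the right-multiplication map $r_A$ through $\D(G)$: since $\D(G)$ is a skew field, the map $\D(G)^m \to \D(G)^n$ induced by $A$ has a kernel that is a free $\D(G)$-module, say of rank $k$, and likewise a free image of rank $m - k$. Tensoring up to $\U(G)$ (using that $\U(G)$ is flat, indeed the Ore localisation, over $\D(G)$ — here one needs that $\D(G) \subseteq \U(G)$ is a division subring of a von Neumann regular ring, so $\U(G)$ is flat over it) one gets that $\ker(r_A \otimes \U(G))$ is $\U(G)$-free of rank $k$. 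Then $\dim_{\Nc(G)}(\ker r_A) = \dim_{\U(G)}(\ker(r_A) \otimes_{\Nc(G)} \U(G)) = k \in \Z$, which is precisely the Atiyah Conjecture for $G$.

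\medskip
\noindent\emph{($\Rightarrow$) Suppose $G$ satisfies the Atiyah Conjecture.} I would show every non-zero $x \in \D(G)$ is invertible in $\D(G)$. Since $\D(G)$ is the division closure of $\Q G$ in $\U(G)$, it suffices to show $x$ is invertible in $\U(G)$, i.e.\ that right multiplication by $x$ on $\U(G)$ is injective (equivalently, as $\U(G)$ is von Neumann regular and $x \neq 0$, that it is an isomorphism). The key point is a dimension argument: elements of $\D(G)$ are built from $\Q G$ by finitely many ring operations and inversions of already-invertible elements, so by induction on this construction one controls $\dim_{\U(G)}$ of kernels of multiplication operators. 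Concretely, one proves by induction over the division-closure construction that for every $x \in \D(G)$ the $\Nc(G)$-dimension of $\ker(r_x \colon \Nc(G) \to \Nc(G))$ is an integer — for $x$ coming from a matrix over $\Q G$ this is the Atiyah Conjecture hypothesis, and this property is preserved under sums, products, and inversion of units. Since $\dim_{\Nc(G)}(\Nc(G)) = 1$ and $G$ is torsion-free (so there are no ``fractional'' projections coming from finite subgroups), an integer $\leq 1$ kernel dimension is $0$ or $1$; it is $1$ iff $x = 0$. Hence for $x \neq 0$ the kernel is trivial and $x$ is invertible in $\U(G)$, thus in $\D(G)$.

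\medskip
\noindent The main obstacle I anticipate is making precise the inductive bookkeeping in the ($\Rightarrow$) direction: one must set up the division closure as an increasing union $\bigcup_i D_i$ where $D_{i+1}$ is obtained from $D_i$ by adjoining inverses (in $\U(G)$) of all $\U(G)$-invertible elements of the subring generated by matrices over $D_i$, and then verify that the ``kernel dimension is an integer'' property passes through each stage — this requires knowing that if the entries of a matrix have integral kernel dimensions then so do the entries of its (partial) inverses, which is where one genuinely uses that $\U(G)$ is von Neumann regular and that $\dim_{\U(G)}$ is additive and faithful. The ($\Leftarrow$) direction is comparatively routine once the flatness of $\U(G)$ over the division ring $\D(G)$ is in hand. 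Since the statement is quoted from \cite[Lemma~10.39]{Lueck2002} with only a change of coefficients from $\C$ to $\Q$, I would in practice simply check that the cited proof never uses anything beyond $\Q \subseteq \C$ and the fact that $\Q$ is a field, and refer to it.
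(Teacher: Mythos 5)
Your ($\Leftarrow$) direction is fine, and your closing fallback --- that the statement is \cite[Lemma 10.39]{Lueck2002} with $\C$ replaced by $\Q$, so one checks the cited proof only uses that $\Q$ is a subfield of $\C$ and refers to it --- is in fact all the paper itself does: it gives no proof beyond that citation and the remark that the argument carries over to rational coefficients. Judged as a standalone argument, however, your ($\Rightarrow$) direction has a genuine gap. The inductive claim that the property ``$\dim_{\Nc(G)}\ker(r_x)\in\Z$'' for single elements is preserved under sums and products is unjustified, and the corresponding statement inside $\U(G)$ is simply false: in $\U(\Z)$, which is the algebra of measurable functions on $S^1$, the elements $x = 1+\chi_A$ (with $A$ a half-circle) and $y = -1$ are both invertible, hence each has kernel dimension $0$, yet $x+y = \chi_A$ has kernel dimension $1/2$. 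So element-wise integrality plus closure under ring operations cannot drive the induction; nothing in your bookkeeping uses that the elements being added actually lie in the division closure of $\Q G$, and that is where the entire difficulty sits.

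The actual mechanism in Linnell's and L\"uck's proof is linearization: the division closure lies in the rational closure, and by Cramer's rule every $x\in\D(G)$ occurs as an entry of $A^{-1}$ for a square matrix $A$ over $\Q G$ which is invertible in $\U(G)$; one then relates $\dim_{\U(G)}(\ker r_x)$ to kernel dimensions of matrices over $\Q G$, which are integers by hypothesis --- note that the Atiyah Conjecture is invoked for genuine matrices, not just $1\times 1$ ones. If you insist on a stage-by-stage induction over the division-closure construction, the induction hypothesis must be integrality of kernel dimensions for all matrices over the intermediate ring $D_i$, and the inductive step is precisely this linearization trick; your ``anticipated obstacle'' paragraph gestures at this but still phrases the needed input as an entry-wise statement about partial inverses, which is not what carries the argument. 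Once integrality is established for every $x\in\D(G)$, your endgame (an integer kernel dimension between $0$ and $1$, faithfulness of the dimension, von Neumann regularity of $\U(G)$, and division-closedness of $\D(G)$) is correct.
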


It is known that if $H\subseteq G$ is a subgroup, then there is a canonical inclusion $\D(H)\subseteq\D(G)$.

\smallskip

Recall from \cref{crossed product of extension} that for an extension of groups
\[1\to K\to G\to Q\to 1\]
the group ring $\Z G$ is isomorphic to the crossed product $\Z K\ast Q$, where $Q$ acts on $\Z K$ by conjugation. When $G$ satisfies the Atiyah Conjecture, this action extends to an action on $\D(K)$ and one can identify the crossed product $\D(K)\ast Q$ with a subring of $\D(G)$ (see \cite[Lemma 10.58]{Lueck2002}). If $Q$ is finitely generated free-abelian, then $\D(K)\ast Q$ satisfies the Ore condition with respect to the non-zero elements $T$ and the Ore localisation admits by \cite[Lemma 10.69]{Lueck2002} an isomorphism
\begin{equation}\label{ore localisation iso}
T^{-1}\left( \D(K)\ast Q\right) \tolabel{\cong} \D(G)
\end{equation}

\subsection{Semifirs and specialisations}
\label{subsec: specialising}
In this section we review the notion of a specialisation, which allows us to compare skew-fields with given maps from a group algebra $\Q G$.

We start with the notion of a semifir. (In general, Cohn's book \cite{Cohn2006} contains a detailed discussion of many aspects of ring theory that will be of relevance to us.)

\begin{dfn}[Semifir]
A ring $R$ is a \emph{semifir} if every finitely generated right ideal of $R$ is free and of unique rank.
\end{dfn}

\begin{thm}[Dicks--Menal{~\cite{DicksMenal1979}}]
\label{semifir}
Let $R$ be a ring and $G$ a non-trivial group. Then $RG$ is a semifir \iff $R$ is a skew-field and $G$ is non-trivial and locally free.
\end{thm}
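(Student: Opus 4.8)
The plan is to prove the two implications separately; the reverse direction is essentially bookkeeping, while the forward direction carries the weight. Throughout, $G$ is non-trivial by the standing hypothesis, so the clause ``$G$ non-trivial'' on the right-hand side is automatic (and cannot be dropped: for the trivial group and a skew-field $R$ one has $RG = R$, already a fir). \emph{For the reverse implication}, assume $R$ is a skew-field and $G$ is locally free. When $G = \Z$, the ring $RG = R[t,t^{-1}]$ is a non-commutative principal ideal domain via the degree division algorithm, hence a fir; when $G$ is free of rank $n$, induction using $RF_n \cong RF_{n-1} \ast_R R[\Z]$ together with Bergman's coproduct theorem (the coproduct over a skew-field of firs is a fir) shows $RF_n$ is a fir, which is also classical and can be cited from \cite{Cohn2006}. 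Finally, for $G$ locally free, write $G$ as the directed union of its finitely generated --- hence free --- subgroups $G_i$, so $RG = \varinjlim_i RG_i$ is a directed union of firs; since being a semifir is governed by finitary conditions (invariant basis number, and trivialisability of each relation $\sum_{i=1}^n a_i b_i = 0$ by an invertible matrix), and any such relation with a trivialising matrix already lies in some $RG_i$, the directed union $RG$ is a semifir (see \cite{Cohn2006}).

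\emph{For the forward implication} suppose $RG$ is a semifir. Trivialising $ab = 0$ shows $RG$, hence $R$, is a domain; and $G$ is torsion-free, since an element $g \neq 1$ of finite order $m$ would give the zero-divisors $(1-g)(1+g+\cdots+g^{m-1}) = 0$. To see $G$ is locally free, let $H \leqslant G$ be finitely generated, say by $h_1,\dots,h_n$. The right ideal $J := \sum_{i=1}^n (h_i - 1)RG$ of $RG$ is generated by the augmentation ideal $I_H \trianglelefteq RH$, and since $RG$ is free --- hence faithfully flat --- as a left $RH$-module, the canonical map $I_H \otimes_{RH} RG \to RG$ is injective with image $J$. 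Being a finitely generated right ideal of the semifir $RG$, this module is free of finite rank, in particular finitely presented, so faithfully flat descent along $RH \hookrightarrow RG$ makes $I_H$ a finitely generated projective right $RH$-module. Thus $0 \to I_H \to RH \to R \to 0$ is a length-one projective resolution of the trivial module, i.e.\ $H$ has cohomological dimension at most one over $R$; as $H$ is finitely generated, torsion-free, and $R \neq 0$, the Stallings--Swan theorem --- in the form valid over an arbitrary non-zero coefficient ring, due to Dunwoody --- shows $H$ is free. Hence $G$ is locally free.

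It remains to show $R$ is a skew-field. Let $0 \neq r \in R$ and fix $1 \neq g \in G$, necessarily of infinite order. The principal right ideals $rRG$ and $(g-1)RG$ meet non-trivially, as $r(g-1) = (g-1)r$ lies in both, so --- $RG$ being in particular a $2$-fir --- their sum is principal: $rRG + (g-1)RG = dRG$, with $d$ a greatest common left divisor of $r$ and $g-1$. A support argument shows every common left divisor of $r$ (supported on $\{1\}$) and $g-1$ (supported on $\{1,g\}$) must be a unit --- for instance, $g - 1 = rw$ would force $rw_1 = -1$ by comparing coefficients at $1$, making $r$ right-invertible hence a unit; the remaining potential common divisors are excluded by an orbit-counting argument using that $G$ is torsion-free. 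Hence $d$ is a unit, $1 \in rRG + (g-1)RG$, and applying the augmentation $\varepsilon \colon RG \to R$ to an identity $1 = ra + (g-1)b$ yields $1 = r\varepsilon(a)$, so $r$ is right-invertible, hence invertible.

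\emph{The main obstacle} is precisely this last step. Local freeness of $G$ drops out cleanly once the bound $\operatorname{cd}_R H \leqslant 1$ has been extracted by descent, but there is no analogous descent for ``$R$ is a skew-field'': subrings, retracts and quotients of semifirs need not be semifirs, so the conclusion has to be squeezed out of the divisibility and factorisation theory of $2$-firs, and making the statement ``every common left divisor of $r$ and $g-1$ is a unit'' precise for an arbitrary torsion-free $G$ is where the real work (and an appeal to \cite{Cohn2006}, or to \cite{DicksMenal1979} directly) is needed. Everything else --- the free-group case, closure of semifirs under directed unions, and the faithfully flat descent for local freeness --- is standard.
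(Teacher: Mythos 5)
First, a point of reference: the paper does not prove this statement at all --- it is quoted verbatim from Dicks--Menal with a citation --- so your argument has to stand on its own. Your reverse direction (Cohn/Bergman for $RF_n$ over a skew-field, closure of semifirs under directed unions) is the standard one and is fine. Your local-freeness argument is also essentially the standard one, with one imprecision: ``faithfully flat descent'' of finitely generated projectivity is a commutative-algebra theorem, and quoting it for the noncommutative extension $RH \subseteq RG$ is not legitimate as stated. The correct (and easy) substitute is the bimodule splitting $RG = RH \oplus R[G \setminus H]$ as $(RH,RH)$-bimodules together with the fact that $RG$ is free as a right $RH$-module: then $J \cong I_H \otimes_{RH} RG$ is $RH$-projective by restriction, and $I_H = I_H \otimes_{RH} RH$ is an $RH$-direct summand of it, hence finitely generated projective; after that, Dunwoody's form of Stallings--Swan applies exactly as you say.

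The genuine gap is the skew-field step, and you acknowledge it yourself. The reduction to showing that the generator $d$ of $rRG + (g-1)RG$ is a unit is correct, but the proposed justification --- a support/``orbit-counting'' argument \emph{using only that $G$ is torsion-free} --- cannot work in that generality: a statement of the shape ``every left divisor in $RG$ of a nonzero scalar has singleton support'' for arbitrary torsion-free $G$ is at least as strong as Kaplansky's unit conjecture, which is now known to be false (Gardam), so torsion-freeness alone is not the right hypothesis to squeeze this out of. The fix is available from your own earlier step: by the time you prove $R$ is a skew-field you already know $G$ is locally free, hence bi-orderable (finitely generated subgroups are free, and bi-orderability passes to directed unions). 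Fix a bi-order on $G$; since $R$ is a domain, the minimal and maximal support elements multiply, so $r = du$ with $r \in R \setminus \{0\}$ forces $\min \supp(d) = \max \supp (d)$, i.e.\ $d = d_0 h$ with $d_0 \in R \setminus \{0\}$, $h \in G$. Then $g - 1 = d_0(hv)$ gives $d_0 c = 1$ for some $c \in R$ (compare coefficients at $g$), and one-sided inverses in a domain are two-sided, so $d_0$, hence $d$, is a unit; thus $1 = ra + (g-1)b$, and applying the augmentation makes $r$ invertible in $R$. With this Malcev--Neumann-type support argument in place of the vague appeal to Cohn or to Dicks--Menal themselves, your outline does close up into a complete proof.
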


Now we introduce the notion of specialisation.

\begin{dfn}[Specialisation]
Let $R$ be a ring. An \emph{$R$-field} consists of a skew-field $\D$ and a ring morphism $\beta \colon R \to \D$. An $R$-field $\D$ is \emph{epic} if $\beta$ is an \emph{epimorphism}, that is,  if for any ring $S$ and any two ring morphisms $\sigma, \sigma' \colon \D \to S$, we have
\[
 \sigma \circ \beta = \sigma' \circ \beta \Longrightarrow \sigma = \sigma'
\]

Given two epic $R$-fields $\beta \colon R \to \D$ and $\beta' \colon R \to \D'$, a \emph{specialisation} of $\D$ to $\D'$ is a pair $(S,\sigma)$ where $S$ is a subring of $\D$ containing $\im \beta$, the map $\sigma \colon S \to D'$ is a ring map with $\sigma \circ \beta = \beta'$, and every element in $S$ not mapped to $0$ by $\sigma$ is invertible in $S$. The ring $S$ is called the \emph{domain} of the specialisation.
\end{dfn}

Note that what we call a specialisation is referred to as a `subhomomorphism' by Cohn; for Cohn a specialisation is an equivalence class of subhomomorphisms.

Note also that an epic $R$-field is in particular an $R$-module. Hence, given a matrix $M$ over $R$, we can talk about $M \otimes \D$; this is of course the same matrix as $\beta(M)$, where we apply the map $\beta$ to entries of $M$.

When $G$ is torsion-free and satisfies the Atiyah conjecture, then $\D(G)$ is an epic $\Q G$-field since it is the division closure of the image of $\Q G$ in $\U(G)$, see \cite[Corollary 7.2.2]{Cohn2006}.

\begin{thm}[Cohn~{\cite[Theorem 7.2.7]{Cohn2006}}]
 \label{spec criterion}
Let $R$ be a ring and let $\D, \D'$ be epic $R$-fields. The following are equivalent:
\begin{enumerate}
 \item There exists a specialisation from $\D$ to $\D'$.
 \item For every square matrix $M$ over $R$, if $M \otimes \D'$ is invertible over $\D'$ then $M \otimes \D$ is invertible over $\D$.
\end{enumerate}
\end{thm}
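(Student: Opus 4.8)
The plan is to treat the two implications separately: $(1)\Rightarrow(2)$ is elementary, while $(2)\Rightarrow(1)$ requires constructing a specialisation and will rest on Cohn's theory of matrix ideals.

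\emph{The implication $(1)\Rightarrow(2)$.} Let $(S,\sigma)$ be a specialisation from $\D$ to $\D'$. First I would observe that $S$ is a local ring: by hypothesis every element of $S$ outside $\ker\sigma$ is a unit, while no unit of $S$ can lie in $\ker\sigma$ (otherwise $1=\sigma(1)=0$ in $\D'$), so the non-units of $S$ are precisely the two-sided ideal $\mathfrak{m}:=\ker\sigma$; thus $S$ is local with maximal ideal $\mathfrak m$, which is also its Jacobson radical, and $E:=S/\mathfrak m\cong\im\sigma$ is a sub-skew-field of $\D'$ containing $\im\beta'$. Now suppose $M$ is a square matrix over $R$ with $M\otimes\D'$ invertible over $\D'$, and set $N:=\beta(M)$, a matrix over $S$. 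Then $\sigma(N)=\beta'(M)$ is a matrix over $E$ that is invertible over the overfield $\D'$, hence already invertible over $E$ (an injective endomorphism of a finite-dimensional $E$-vector space is bijective). Lifting $(N\bmod\mathfrak m)^{-1}$ to a matrix $P$ over $S$ yields $NP=I-A$ and $PN=I-A'$ with $A,A'$ having all entries in $\mathfrak m=\operatorname{rad}(S)$, hence in $\operatorname{rad}(M_n(S))$; such matrices are invertible, so $N=M\otimes\D$ is invertible over $S$, a fortiori over $\D$.

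\emph{The implication $(2)\Rightarrow(1)$.} Let $\Sigma$ be the set of square matrices over $R$ that become invertible over $\D'$; by $(2)$ they also become invertible over $\D$. Form the universal $\Sigma$-inverting ring $\lambda\colon R\to R_\Sigma$; since $\D$ and $\D'$ are both $\Sigma$-inverting, the universal property gives ring maps $\bar\beta\colon R_\Sigma\to\D$ and $\bar\beta'\colon R_\Sigma\to\D'$ with $\bar\beta\lambda=\beta$ and $\bar\beta'\lambda=\beta'$. The crucial input — and the main obstacle — is that $R_\Sigma$ is a \emph{local} ring whose residue field is $\D'$, with $\bar\beta'$ the quotient map onto $R_\Sigma/\mathfrak m$ for $\mathfrak m:=\ker\bar\beta'$ the unique maximal ideal. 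This is exactly where Cohn's correspondence between epic $R$-fields and prime matrix ideals enters: $\Sigma$ is the complement of the prime matrix ideal of square matrices singular over $\D'$, the localisation of $R$ at the complement of a prime matrix ideal is local with residue field the associated epic $R$-field, and that field is $\D'$ because an epic $R$-field is recovered from its singular kernel. (In lieu of the full machinery one can argue directly, via Cohn's Cramer's-rule trick, that every element of $R_\Sigma$ not annihilated by $\bar\beta'$ is a unit.)

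Granting this, the specialisation is built as follows. Put $S:=\bar\beta(R_\Sigma)\subseteq\D$; it is a subring of $\D$ containing $\bar\beta(\lambda(R))=\im\beta$. Since $R_\Sigma$ is local, any $x\notin\mathfrak m$ is a unit, so $\bar\beta(x)$ is nonzero in $\D$; hence $\ker\bar\beta\subseteq\mathfrak m=\ker\bar\beta'$, and $\sigma\big(\bar\beta(x)\big):=\bar\beta'(x)$ is a well-defined ring map $\sigma\colon S\to\D'$ with $\sigma\circ\beta=\sigma\circ\bar\beta\circ\lambda=\bar\beta'\circ\lambda=\beta'$. Finally, if $s=\bar\beta(x)\in S$ has $\sigma(s)=\bar\beta'(x)\neq0$, then $x\notin\mathfrak m$, so $x$ is a unit of $R_\Sigma$ and $s^{-1}=\bar\beta(x^{-1})\in S$; thus $(S,\sigma)$ is a specialisation from $\D$ to $\D'$. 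The only genuinely hard point is the locality of $R_\Sigma$ together with the identification of its residue field, for which I would invoke Cohn's matrix-ideal theory.
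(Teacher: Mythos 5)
The paper offers no proof of this statement---it is imported verbatim from Cohn \cite[Theorem 7.2.7]{Cohn2006}---and your argument is essentially Cohn's own: your (1)$\Rightarrow$(2) direction (locality of the domain $S$ with maximal ideal $\ker\sigma$, invertibility of $\beta(M)$ over $S$ via lifting an inverse modulo the radical) is complete and correct, and your (2)$\Rightarrow$(1) direction via the universal $\Sigma$-inverting localisation $R_\Sigma$ is the standard route, with the single genuinely hard point---that $R_\Sigma$ is local with residue field $\D'$---correctly isolated and delegated to Cohn's prime-matrix-ideal/singular-kernel machinery. Since that delegated step is exactly the substance of the cited theorem's proof, your proposal is a faithful reconstruction of the source argument rather than an independent one, which matches how the paper itself treats the result (as a black-box citation).
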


Cohn gives two further equivalent statements, but they will be of no importance to us.

 \smallskip
We now define a class of groups for which the skew-fields $\D(G)$ admit desirable specialisations.
\begin{dfn}[Specialising groups]
Let $\Phi$ be a collection of morphisms $\phi \colon G \to \R$. We say that $G$ is \emph{$\Phi$-specialising} if $G$ is torsion-free, satisfies the Atiyah Conjecture, and given any group epimorphism $\alpha \colon G \to \Gamma$ with $\Gamma$ torsion-free and elementary amenable such that every $\phi \in \Phi$ factors through $\alpha$, the $\Q G$-field $\D(G)$ admits a specialisation to the $\Q G$-field $\D(\Gamma)$, where the map $\Q G \to \D(\Gamma)$ is obtained by composing $\alpha \colon \Q G\to \Q \Gamma$ with the embedding $\Q \Gamma \to \D(\Gamma)$.

We say that a group $G$ is \emph{specialising} if $G$ is $\emptyset$-specialising.
\end{dfn}

Note that $\Phi$-specialising implies $\Psi$-specialising for $\Phi \subseteq \Psi$, and so specialising is the strongest property in this family of properties. On the other extreme, when $\Phi\supseteq H^1(G;\Z)$, then being $\Phi$-specialising means that we need to consider only those quotients $\Gamma$ which map onto $H_1(G;\Z)_f$, the free part of the abelianisation of $G$.

 The following is a combination of results of Cohn and Linnell.
 
 \begin{thm}
\label{loc free spec}
Locally free groups are specialising.
\end{thm}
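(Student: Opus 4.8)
The plan is to reduce the statement to Cohn's specialisation criterion (\cref{spec criterion}) and then verify the matrix-invertibility condition using the theory of semifirs. Let $G$ be a locally free group; by \cref{semifir} applied with $R = \Q$ we know that $\Q G$ is a semifir. A group that is locally free is automatically torsion-free, and locally free groups are known to satisfy the Atiyah Conjecture (they are directed unions of free groups, and free groups satisfy it, and the class is closed under directed unions), so $\D(G)$ is a skew-field by \cref{atiyah using D} and is an epic $\Q G$-field by the remark following the definition of specialisation. Fix now an arbitrary epimorphism $\alpha\colon G \to \Gamma$ with $\Gamma$ torsion-free elementary amenable. Then $\Gamma$ also satisfies the Atiyah Conjecture (elementary amenable torsion-free groups do), so $\D(\Gamma)$ is a skew-field, and composing $\alpha$ with $\Q\Gamma \hookrightarrow \D(\Gamma)$ makes $\D(\Gamma)$ an epic $\Q G$-field. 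I must produce a specialisation from $\D(G)$ to $\D(\Gamma)$, i.e. verify condition (2) of \cref{spec criterion}: for every square matrix $M$ over $\Q G$, if $M \otimes \D(\Gamma)$ is invertible over $\D(\Gamma)$ then $M \otimes \D(G)$ is invertible over $\D(G)$.

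The key point is the following: over a semifir $R$, a square matrix is invertible over \emph{the} universal field of fractions of $R$ if and only if it is \emph{full}, i.e. cannot be written as a product of a non-square pair of matrices (a $k \times (k-1)$ matrix times an $(k-1)\times k$ matrix). This is Cohn's theory of the universal localisation of a semifir, which shows that a semifir has a universal field of fractions in which precisely the full matrices become invertible. So the first step is to argue that $\D(G)$ is (isomorphic as a $\Q G$-field to) the universal field of fractions of the semifir $\Q G$; this is essentially the content of Linnell's work, and it is the reason \cref{loc free spec} is attributed to Cohn and Linnell — Linnell identifies $\D(G)$ for free (and locally free) groups with Cohn's universal field of fractions. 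Granting that, invertibility of $M \otimes \D(G)$ is equivalent to $M$ being full over $\Q G$. On the other side, $M \otimes \D(\Gamma)$ being invertible over \emph{some} epic $\Q G$-field forces $M$ to be full over $\Q G$ already, because a non-full matrix factors through a rectangular pair over $\Q G$ and that factorisation persists under any ring homomorphism, obstructing invertibility. Chaining these: $M \otimes \D(\Gamma)$ invertible $\Rightarrow$ $M$ full over $\Q G$ $\Rightarrow$ $M \otimes \D(G)$ invertible, which is exactly condition (2).

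The main obstacle I expect is the identification of $\D(G)$ with the universal field of fractions of the semifir $\Q G$ — this is the substantive input (Linnell's theorem), and one has to be careful that the two $\Q G$-field structures genuinely agree, i.e. that the embedding $\Q G \hookrightarrow \D(G)$ coming from affiliated operators is compatible with the embedding into Cohn's universal construction. Once this identification is in hand, the rest is a formal unwinding of Cohn's criterion together with the elementary observation that fullness of a matrix over $\Q G$ is preserved under arbitrary ring homomorphisms and is necessary for invertibility over any target field. It is worth noting that the argument is genuinely uniform in the quotient $\alpha$: nothing about $\Gamma$ is used beyond it giving an epic $\Q G$-field, so in fact we obtain the stronger conclusion that $\D(G)$ specialises to \emph{every} epic $\Q G$-field, which is the hallmark of the universal field of fractions; the specialising condition as stated (only to elementary amenable quotients) is then an immediate consequence.
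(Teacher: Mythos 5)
Your proposal follows essentially the same route as the paper: $\Q G$ is a semifir by \cref{semifir}, Cohn's universal localisation at the full matrices is an epic $\Q G$-field in which exactly the full matrices become invertible, fullness is necessary for invertibility over \emph{any} epic $\Q G$-field because a rectangular factorisation persists under every ring homomorphism, and \cref{spec criterion} then produces a specialisation to every epic $\Q G$-field, in particular to $\D(\Gamma)$ --- the paper draws the same uniform conclusion (this is exactly the statement that one has the universal field of fractions).

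The one step you defer is, however, precisely where the paper does its substantive work: the identification of $\D(G)$ with the universal field of fractions of $\Q G$, as $\Q G$-fields. For a finitely generated free group this is indeed extractable from Linnell's results, in the form of \cite[Lemma 10.81]{Lueck2002} ($\D(F_i)$ is universally $\Sigma(\Q F_i\to\D(F_i))$-inverting), but for a general locally free group there is no single off-the-shelf citation; the paper assembles the identification by writing $G=\bigcup F_i$ as an increasing union of finitely generated free subgroups, using $\D(G)=\bigcup\D(F_i)$ (\cite[Lemma 10.83]{Lueck2002}), noting that each $\Sigma(\Q F_i\to\D(F_i))$ consists of matrices which are full over $\Q G$ and hence inverted by the universal map $\beta\colon\Q G\to\D$, so that one obtains compatible maps $\gamma_i\colon\D(F_i)\to\D$ which glue to $\gamma\colon\D(G)\to\D$; since $\gamma$ is injective (its source is a skew-field) and $\beta$ is epic, $\gamma$ is an isomorphism of $\Q G$-fields. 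This colimit argument also resolves the compatibility issue you flag, because $\gamma$ is constructed over $\Q G$ from the start. So the skeleton of your argument is correct and matches the paper's, but to be complete you must supply this identification (or an equivalent reference valid for locally free, not merely free, groups) rather than cite it wholesale as ``Linnell's theorem''.
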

\begin{proof}
Let $F$ denote a locally free group.
We start by observing that $\Q F$ is a semifir (by \cref{semifir} for non-trivial $F$, and by the fact that $\Q$ is a field for trivial $F$), and hence a \emph{Sylvester domain} by \cite[Proposition 5.1.1]{Cohn2006} (this last term is defined in \cite{Cohn2006}, but its precise meaning is not really important for us here).

Now let $M$ be an $n \times n$ matrix over $\Q F$. Suppose that there exist an $n \times m$  matrix $P$ and an $m \times n$ matrix  $Q$, both over $\Q F$, where $m<n$, and such that $M = PQ$. In such a situation $M$ is defined to be \emph{non-full}, and if no such $P$ and $Q$ exist, then $M$ is \emph{full}. Since $\Q F$ is a Sylvester domain, \cite[Theorem 7.5.12]{Cohn2006} gives us an \emph{honest} ring homomorphism $\beta\colon\Q F \to \D$, where $\D$ is an epic $\Q F$-field called \emph{the universal localisaton of $\Q F$ with respect to the set of full matrices}. `Honest' means precisely that if a square matrix
$M$ is full over $\Q F$, then $M \otimes \D$ is full over $\D$. Since $\D$ is a skew-field, it is easy to see that being full is the same as having non-zero determinant (and being invertible). Note also that $\beta$ is necessarily injective. 

Let $D'$ be any epic $\Q F$-field. Clearly, if $M$ is a square matrix over $\Q F$ with $M = PQ$, then $M \otimes \D' = P\otimes \D' \cdot Q\otimes \D'$. Thus,
if $M \otimes \D'$ is invertible, then $M$ itself is full, and therefore $M \otimes \D$ is full, and hence invertible. Thus, applying
\cref{spec criterion} tells us that $\beta\colon\Q F\to\D$ admits a specialisation to any epic $\Q F$-field (in Cohn's terminology, $\D$ is therefore \emph{the universal field of fractions}).

It remains to prove that $\D \cong \D(F)$. Since any group is the union of its finitely generated subgroups, there is an increasing sequence of finitely generated free subgroups $F_i$ of $F$ such that $F = \bigcup F_i$. By \cite[Lemma 10.83]{Lueck2002}, we have
\[
 \D(F) = \bigcup \D(F_i)
\]
Also, by \cite[Lemma 10.81]{Lueck2002}, $\D(F_i)$ is universally $\Sigma(\Q F_i\to\D(F_i))$-inverting (see \cite[Section 10.2.2]{Lueck2002} for the definition of this concept). Since $\Sigma(\Q F_i\to\D(F_i))\subseteq \Sigma(\Q F\to\D(F))$ is contained in the set of full matrices over $\Q F$, and $\beta\colon \Q F\to \D$ inverts all full matrices, there is a ring map $\gamma_i\colon\D(F_i)\to \D$ such that the square 
\[\xymatrix{
	\Q F_i \ar[r]\ar[d]  & \Q F \ar[d]^\beta\\
	\D(F_i)\ar[r]^{\gamma_i} & \D
}\]
commutes.

The map $\gamma_j$ agrees with $\gamma_i$ on $\D(F_i)$ for $j>i$; they thus fit together to give a map $\gamma\colon\D(F)\to D$ such that the triangle
\[\xymatrix{
	 & \Q F \ar[d]^\beta\ar[ld]\\
	 \D(F)\ar[r]^\gamma\ar[r]  & \D
}\]
commutes. But since $\beta$ is epic and $\gamma$ is necessarily injective, $\gamma$ must in fact be an isomorphism.

\end{proof}

\subsection{Universal \texorpdfstring{$L^2$}{L\texttwosuperior}-torsion}\label{sec: universal l2 torsion}
Let $G$ be a group satisfying the Atiyah Conjecture. In \cite[Definition 1.1]{FriedlLueck2015b}, Friedl and L\"uck define the \emph{weak $K_1$-group} $K_1^w(\Z G)$ as the abelian group generated by $\Z G$-endomorphisms $f\colon\Z G^n\to\Z G^n$ that become a weak isomorphism (a bounded injective operator with dense image) upon applying $-\otimes_{\Z G} L^2(G)$, subject to the usual relations in $K_1$. The above condition is equivalent to $f$ becoming invertible after applying $-\otimes_{\Z G} \D(G)$ (see \cite[Lemma 1.21]{FriedlLueck2015b}). The \emph{weak Whitehead group $\Wh^w(G)$ of $G$} is defined as the quotient of $K_1^w(\Z G)$ by $\{\pm g\mid g\in G\}$ considered as endomorphisms of $\Z G$ via right multiplication. An injective group homomorphism $i\colon G\to H$ induces maps
\begin{gather*}
	i_*\colon K_1^w(\Z G) \to K_1^w(\Z H)\\
	i_*\colon \Wh^w(G)\to \Wh^w(H)
\end{gather*}

\begin{ex}\label{ex: weak K1 of abelian}
	For $H$ a finitely generated free-abelian group, we have isomorphisms
	\[ K_1^w(\Z H) \cong K_1(T^{-1}(\Z H))\cong T^{-1}(\Z H)^\times\]
where $T$ denotes the set of non-trivial elements of $\Z H$. The first isomorphism is a special case of the main result of \cite{LinnellLueck2015} by Linnell--L\"uck, and the second one is well-known and induced by the Dieudonn\'e determinant over the field $T^{-1}(\Z H)$.
\end{ex}

A $\Z G$-chain complex is called \emph{based free} if every chain module is free and has a preferred basis. Given an $L^2$-acyclic finite based free $\Z G$-chain complex $C_*$, Friedl-L\"uck \cite[Definition 1.7]{FriedlLueck2015b} define the \emph{universal $L^2$-torsion of $C_*$}
\[\tor_u(C_*; \mathcal{N}(G)) \in K_1^w(\Z G)\]
in a similar fashion as the Whitehead torsion.

If $X$ is an $L^2$-acyclic finite free $G$-CW-complex, then its cellular chain complex $C_*(X)$ is finite and free, and we equip it with some choice of bases coming from the CW-structure. Since this is only well-defined up to multiplication by elements in $G$, the \emph{universal $L^2$-torsion $\tor_u(X;\Nc(G))\in\Wh^w(G)$ of $X$} is defined as the image of $\tor_u(C_*(X); \Nc(G))$ under the projection $K_1^w(\Z G)\to \Wh^w(G)$.

A finite connected CW-complex $X$ is \emph{$L^2$-acyclic} if its universal cover $\widetilde{X}$ is an $L^2$-acyclic $\pi_1(X)$-CW-complex. If this is the case, then the \emph{universal $L^2$-torsion of $X$} is
\[\tor_u(\widetilde{X}) := \tor_u(\widetilde{X};\mathcal{N}(\pi_1(X))) \in \Wh^w(\pi_1(X))\]

If $X$ is a (possible disconnected) finite CW-complex, then it is \emph{$L^2$-acyclic} if each path component is $L^2$-acyclic in the above sense. In this case, its \emph{universal $L^2$-torsion} is defined by
\[ \tor_u(\widetilde{X}) := (\tor_u(\widetilde C))_{C\in\pi_0(X)} \in \Wh^w(\Pi(X)) := \!\!\!\!\!\! \bigoplus_{C\in\pi_0(X)} \!\!\! \!\! \Wh^w(\pi_1(C))\]

A map $f\colon X\to Y$ of finite CW-complexes such that
\[\pi_1(f,x)\colon \pi_1(X,x)\to \pi_1(Y, f(x))\]
is injective for all $x\in X$ induces a homomorphism
\[ f_*\colon \Wh^w(\Pi(X)) \to \Wh^w(\Pi(Y))\]
by
\[ f_* := \big( (f|_C)_*\colon \Wh^w(\pi_1(C))\to \Wh^w(\pi_1(D))\big)_{C\in\pi_0(X)}\]
where $f(C)\subseteq D$.

\smallskip
The main properties of the universal $L^2$-torsion are collected in \cite[Theorem 2.5]{FriedlLueck2015b}, respectively \cite[Theorem 2.11]{FriedlLueck2015b}, of which we recall here the parts needed in this paper.

\begin{lem}\label{properties}
	\begin{enumerate}
		\item\label{homotopy invariance} Let $f\colon X\to Y$ be a $G$-homotopy equivalence of finite free $G$-CW-complexes. Suppose that $X$ or $Y$ is $L^2$-acyclic. Then both $X$ and $Y$ are $L^2$-acyclic and we get
		\[ \tor_u(X;\mathcal{N}(G)) - \tor_u(Y;\mathcal{N}(G)) = \zeta(\tau(f))\]
		where $\tau(f)\in\Wh(G)$ is the Whitehead torsion of $f$ and \[
\zeta\colon \Wh(G)\to\Wh^w(G)\] is the obvious homomorphism.
		\item\label{sum formula}	Let
		\[\xymatrix{
			X_0 \ar[r]\ar[d] \ar[dr]^{j_0} & X_1\ar[d]^{j_1}\\
			X_2\ar[r]^{j_2} & X
		}\]
		be a pushout of finite CW-complexes such that the top horizontal map is cellular, the left arrow is an inclusion of CW-complexes, and $X$ carries the CW-structure coming from the ones on $X_i$, $i=0,1,2$. Suppose that $X_i$ for $i= 0, 1,2$ is $L^2$-acyclic and that for any $x_i\in X_i$ the induced homomorphism $\pi_1(X_i,x_i)\to \pi_1(X,j_i(x_i))$ is injective. Then $X$ is $L^2$-acyclic and we have
		\[ \tor_u(\widetilde{X}) = (j_1)_*(\tor_u(\widetilde{X}_1)) + (j_2)_*(\tor_u(\widetilde{X}_2)) - (j_0)_*(\tor_u(\widetilde{X}_0)) \]
		\item \label{restriction} Let $p\colon X\to Y$ be a finite covering of finite connected CW-complexes. Let $p^*\colon \Wh^w(\pi_1(Y))\to\Wh^w(\pi_1(X))$ be the homomorphism induced by restriction with $\pi_1(p)\colon \pi_1(X)\to\pi_1(Y)$. Then $X$ is $L^2$-acyclic if and only if $Y$ is $L^2$-acyclic and in this case we have
		\[ \tor_u(\widetilde{X}) = p^*(\tor_u(\widetilde{Y}) )\]
	\end{enumerate}
\end{lem}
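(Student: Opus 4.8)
The plan is to read all three assertions off the general properties of the universal $L^2$-torsion established by Friedl--L\"uck in \cite[Theorems 2.5 and 2.11]{FriedlLueck2015b}, of which this Lemma is a deliberately abridged restatement; the only genuine work is to check that the hypotheses of the cited statements are met in our situation and that the abbreviations we have made are faithful. Throughout one uses that $\tor_u$ is a $\Wh^w$-valued torsion invariant which, via the comparison map $\zeta\colon\Wh(G)\to\Wh^w(G)$ and via the identification of $L^2$-acyclicity with acyclicity after $-\otimes_{\Z G}\D(G)$, obeys exactly the formal torsion calculus (additivity along short exact sequences of finite based free complexes, and compatibility with restriction to finite-index subgroups) that ordinary Whitehead torsion does.

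For the first statement, a $G$-homotopy equivalence $f\colon X\to Y$ induces a chain homotopy equivalence $C_*(f)\colon C_*(X)\to C_*(Y)$ of finite based free $\Z G$-chain complexes whose algebraic mapping cone is $\Z G$-chain contractible, hence $L^2$-acyclic, with Whitehead torsion $\tau(f)$. Additivity of torsion along $0\to C_*(Y)\to\operatorname{cone}(C_*(f))\to\Sigma C_*(X)\to 0$ then yields simultaneously that $C_*(X)$ is $L^2$-acyclic if and only if $C_*(Y)$ is, and that $\tor_u(X;\Nc(G))-\tor_u(Y;\Nc(G))=\zeta(\tau(f))$, once the suspension sign is tracked with the conventions of \cite{FriedlLueck2015b}. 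For the third statement, an $n$-sheeted covering $p\colon X\to Y$ exhibits $C_*(\widetilde X)$ as the restriction of $C_*(\widetilde Y)$ along the finite-index inclusion $\pi_1(p)\colon\pi_1(X)\into\pi_1(Y)$, compatibly with the preferred bases up to the group action; since $\D(\pi_1(X))\subseteq\D(\pi_1(Y))$ and restriction scales von Neumann dimension by $n$, $L^2$-acyclicity passes in both directions, and naturality of torsion under restriction gives $\tor_u(\widetilde X)=p^*(\tor_u(\widetilde Y))$.

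For the second statement, applying cellular chains to the pushout produces a short exact sequence of finite based free chain complexes over the several group rings involved, and the hypothesis that each $\pi_1(X_i,x_i)\to\pi_1(X,j_i(x_i))$ is injective is precisely what is needed, first so that the push-forward maps $(j_i)_*$ on $\Wh^w$ are defined, and second so that the three pieces remain $L^2$-acyclic after inducing up to $\pi_1(X)$ (equivalently, after $-\otimes\D(\pi_1(X))$); Mayer--Vietoris additivity of torsion then delivers both the $L^2$-acyclicity of $X$ and the alternating-sum formula. The step requiring the most care is exactly this last piece of bookkeeping --- keeping track of which group ring each chain complex lives over and verifying that injectivity of the $\pi_1$-maps makes all the transfer maps and acyclicity statements compatible --- but this is carried out in \cite[Theorem 2.11]{FriedlLueck2015b}, and for our purposes it suffices to invoke it.
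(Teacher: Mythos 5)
Your proposal is correct and matches the paper's treatment: the paper gives no proof of this lemma at all, explicitly presenting it as a restatement of the relevant parts of Friedl--L\"uck's results (\cite[Theorems 2.5 and 2.11]{FriedlLueck2015b}), which is exactly what you invoke, and your supplementary sketches (mapping-cone additivity for homotopy invariance, Mayer--Vietoris additivity with the $\pi_1$-injectivity hypothesis for the pushout, and restriction along a finite-index subgroup for coverings) are the standard arguments behind those cited theorems.
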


Next we apply this invariant to the groups we are interested in.

\begin{defin}\label{def: universal}
	Let $G$ be a group with a finite model for its classifying space $BG$, and let $g\colon G\to G$ be a monomorphism. Let $T$ be the mapping torus of the realisation $Bg\colon BG\to BG$. Given a factorisation $G *_g\tolabel{p}\Gamma\tolabel{q} \Z$ of the induced epimorphism, denote by $\bar{T}\to T$ the $\Gamma$-covering corresponding to $p$. Suppose that the classical Whitehead group $\Wh(\Gamma)$ of $\Gamma$ is trivial. Then $\bar{T}$ is $L^2$-acyclic \cite[Theorem 1.39]{Lueck2002}, and \cref{properties} (\ref{homotopy invariance}) implies that we get a well-defined invariant
	\[ \tor_u(G*_g,p) := \tor_u(\bar{T};\Nc(\Gamma)) \in \Wh^w(\Gamma) \]
	which only depends on $G, g$ and $p$, but not on the realisations. If $p= \id_G$, then we write $\tor_u(G*_g) = \tor_u(G*_g, \id_G)$.
\end{defin}
	A classical theorem of Waldhausen \cite[Theorem 19.4]{Waldhausen78} says that $\Wh(F_n*_g) = 0$, so that we may apply this in particular to the special case where $\Gamma = G\ast_g = F_n*_g$, and $p = \id$.

\subsection{The \texorpdfstring{$L^2$}{L\texttwosuperior}-torsion polytope}\label{sec: l2 torsion polytope}

Let $H$ be a finitely generated free-abelian group. An \emph{(integral) polytope} in $H\otimes_\Z \R$ is the convex hull of a non-empty finite set of points in $H$ (considered as a lattice inside $H\otimes_\Z \R$).

Given two polytopes $P_1$ and $P_2$ in $H\otimes_\Z \R$, their \emph{Minkowski sum} is defined as
\[ P_1 + P_2 := \{ x + y \in H\otimes_\Z \R\mid x\in P_1, y\in P_2\}\]
It is not hard to see that the Minkowski sum is \emph{cancellative} in the sense that $P_1 + Q = P_2 + Q$ implies $P_1 = P_2$. It turns the set of polytopes in $H\otimes_\Z \R$ into a commutative monoid with the one-point polytope $\{0\}$ as the identity. The \emph{(integral) polytope group of $H$}, denoted by  $\Pol(H)$, is defined as the Grothendieck completion of this monoid, so elements are formal differences of polytopes $P - Q$, subject to the relation
\[ P-Q = P'-Q'\Longleftrightarrow P+Q' = P' +Q\]
where on the right-hand side the symbol $+$ denotes the Minkowski sum. With motivation originating in low-dimensional topology, integral polytope groups have recently received increased attention, see \cite{ChaFriedlFunke2015, Funke2016}.

We define $\Pol_T(H)$ to be the cokernel of the homomorphism $H\to \Pol(H)$ which sends $h$ to the one-point polytope $\{h\}$. In other words, two polytopes become identified in $\Pol_T(H)$ \iff they are related by a translation with an element of $H$.

For a finite set $F\subseteq H$, we denote by $P(F)$ the convex hull of $F$ inside $H\otimes_\Z \R$.

\smallskip
Let $G$ be a torsion free group satisfying the Atiyah Conjecture. Then as before the integral group ring $\Z G$ embeds into the skew field $\D(G)$. Let $p\colon G\to H$ be an epimorphism onto a finitely generated free-abelian group $H$, and denote by $K$ the kernel of the projection $p$. Friedl-L\"uck \cite[Section 3.2]{FriedlLueck2015b} define a \emph{polytope homomorphism}
\begin{equation}\label{polytope homomorphism}
	\P\colon K_1^w(\Z G)\to \Pol(H)
\end{equation}
as the composition of the following maps: Firstly, apply the obvious map
\begin{equation}\label{poly1}
	K_1^w(\Z G)\to K_1(\D(G)), \;\; [f]\mapsto [\id_{\D(G)}\otimes_{\Z G} f]
\end{equation}
Since $\D(G)$ is a skew-field, the Dieudonn\'e determinant constructed in \cref{dieudonne} induces a map
\begin{equation}\label{poly2}
	\det_{D(G)}\colon K_1(\D(G)) \to \D(G)^\times/ [\D(G)^\times, \D(G)^\times]
\end{equation}
which is in fact an isomorphism (see Silvester~\cite[Corollary 4.3]{Silvester1981}). Finally, we use the isomorphism (\ref{ore localisation iso})
\begin{equation}\label{poly3}
	j\colon \D(G) \cong T^{-1}\left( \D(K)\ast H\right)
\end{equation}
For $x \in \D(K)\ast H$ we define $P(x) := P(\supp(x))\in \Pol(H)$. It is not hard to see that for two such elements $x_1,x_2$ we have $P(x_1x_2) = P(x_1) + P(x_2)$. We may therefore define a homomorphism
\begin{equation}\label{poly4}
	P\colon \big(T^{-1}\left( \D(K)\ast H\right)\big)^\times \to \Pol(H), \; t^{-1}s\mapsto P(s) - P(t)
\end{equation}

Since the target of $P$ is an abelian group, the composition $P\circ j|_{\D(G)^\times}$ factors through the abelianisation of $\D(G)^\times$. The polytope homomorphism announced in (\ref{polytope homomorphism}) is induced by the maps (\ref{poly1}), (\ref{poly2}), (\ref{poly3}) and (\ref{poly4}), and it does not depend on the choices used to construct the isomorphism (\ref{poly3}). We get an induced polytope homomorphism
\begin{equation}\label{poly_hom}
\P\colon \Wh^w(G) \to \Pol_T(H)
\end{equation}
If $x$ is an element in $\D(G)^\times$, we will henceforth use the isomorphism $j$ without mention and therefore denote the image of $x$ under $P\circ j|_{\D(G)^\times}$ simply by $P(x)$.

In the following definition we denote by $H_1(G)_f$ the free part of the abelianisation $H_1(G)$ of a group $G$.

\begin{defin}\label{def: l2 polytope}
	Let $X$ be a free finite $G$-CW-complex. We define the \emph{$L^2$-torsion polytope} $P_{L^2}(X;\Nc(G))$ of $X$ as the image of $-\tor_u(X;\Nc(G))$ under the polytope homomorphism (\ref{poly_hom}).
	
	Likewise, if $g\colon G\to G$ is a monomorphism of a group $G$ with a finite classifying space, and the obvious epimorphism $G*_g\to H_1(G*_g)_f$ factors through some $p\colon G*_g\to \Gamma$ such that $\Gamma$ satisfies the Atiyah Conjecture and $\Wh(\Gamma) = 0$, then the \emph{$L^2$-torsion polytope of $g$ relative to $p$}
	\[P_{L^2}(G*_g, p)\in \Pol_T(H_1(\Gamma)_f) = \Pol_T(H_1(G\ast_g)_f)\]
	is defined as the image of $-\tor_u(G*_g,p)$ under $\P\colon \Wh^w(\Gamma)\to \Pol_T(H_1(\Gamma)_f)$. If $p= \id_G$, then we just write $P_{L^2}(G*_g)$.
\end{defin}

We expect the $L^2$-torsion polytope to carry interesting information about the monomorphism $g$. Even for free groups we get an interesting invariant, which is new also for their automorphisms. On the other side of the universe of groups, the $L^2$-torsion polytope was shown to vanish if $X=EG$ is the finite classifying space of an amenable group $G$ that contains a non-abelian elementary amenable normal subgroup \cite{Funke2017}.

\subsection{The Alexander polytope}\label{Alex polytope}

The Alexander polynomial was first introduced by Alexander in~\cite{Alexander1928} as a knot invariant. Its definition was later extended by McMullen~\cite{McMullen2002} to all finitely generated groups in the following way.

Given a finite CW-complex $X$ with a basepoint $x$ and $\pi_1(X) = G$, consider the covering $\pi\colon\bar{X}\to X$ corresponding to the quotient map $p\colon G\to H_1(G)_f =: H$. The \emph{Alexander module} of $X$ is the $\Z H$-module
\[ A(X) = H_1(\bar{X},\bar{x},\Z)\]
where $\bar{x} = \pi^{-1}(x)$.

Now let $A$ be any finitely generated $\Z H$-module. Since $\Z H$ is Noetherian, we may pick a presentation
\[ \Z H^r \tolabel{M} \Z H^s \to A\to 0\]
The \emph{elementary ideal} $I(A)$  of $A$ is the ideal generated by all $(s-1)\times (s-1)$-minors of the matrix $M$. The \emph{Alexander ideal} of $X$ is $I(A(X))$, and the \emph{Alexander polynomial} $\Delta_X$ is defined as the greatest common divisor of the elements in $I(A(X))$. This invariant is well-defined up to multiplication by units in $\Z H$ and we will view it as an element in $\Wh^w(H)\cong T^{-1}(\Z H)/\{\pm h\mid h\in H\}$, where this isomorphism comes from Example \ref{ex: weak K1 of abelian}. Finally, the \emph{Alexander polytope} $P_A(X)$ is defined as the image of $\Delta_X$ under the polytope homomorphism
\[ \P\colon \Wh^w(H) \to \Pol_T(H)\]

The Alexander module and hence the Alexander polynomial depend only on the fundamental group, and we define $\Delta_G := \Delta_X$ and $P_A(G) := P_A(X)$ for any space with $\pi_1(X) = G$. This applies in particular to descending HNN extensions of finitely generated groups.

We emphasise that the Alexander polynomial is accessible from a finite presentation of $G$: We can take $X$ to be the presentation complex, so that the $\Z H$-chain complex of the pair $(\bar{X}, \bar{x})$ looks like
\[ 0 \to \Z H^r \tolabel{F} \Z H^s \to C_0(\bar{X})/C_0(\bar{x}) = 0\]
where $C_0$ denotes the group of zero chains and $F$ contains the Fox derivatives associated to the given presentation (see \cref{fox derivatives}). Thus $A(X)$ is the cokernel of the map $F$, which immediately gives a finite presentation of $A(X)$ as desired.

\subsection{Seminorms on the first cohomology}

Given a polytope $P\subseteq H\otimes_\Z\R$, we obtain a seminorm $\|\cdot\|_P$ on $\Hom(H, \R) \cong \Hom_\R(H\otimes_\Z\R, \R)$ by putting
\[\|\phi\|_P := \sup\{\phi(x)-\phi(y)\mid x,y\in P\}\]
It is clear that $\|\cdot\|_P$ remains unchanged when $P$ is translated within $H\otimes_\Z\R$. Moreover, if $Q$ is another such polytope, then we get for the Minkowski sum
\[ \|\phi\|_{P+Q} = \|\phi\|_P + \|\phi\|_Q\]
Thus we get a homomorphism of groups
\[ \norm\colon \Pol_T (H) \to \Map(\Hom(H,\R),\R),\; P-Q\mapsto \left( \phi\mapsto \|\phi\|_P - \|\phi\|_Q\right)\]
where $\Map(\Hom(H,\R), \R)$ denotes the group of continuous maps to $\R$ with the pointwise addition. In general, $\norm(P-Q)$ does not need to be a seminorm.

The following definition is due to McMullen \cite{McMullen2002}.

\begin{defin}
	If $G$ is a finitely generated group, then the \emph{Alexander norm}
	\[ \|\cdot\|_A\colon H^1(G;\R)\to\R\]
	is defined as the image of the Alexander polytope $P_A(G)$ under $\norm$.
\end{defin}

If $G$ is the fundamental group of a compact connected orientable $3$-manifold $M$, the first cohomology $H^1(M;\R) = H^1(G;\R)$ carries another well-known seminorm $\|\cdot\|_T$, called the \emph{Thurston seminorm}. It was first defined and examined by Thurston \cite{Thurston1986} and is closely related to the question of whether (and how) $M$ fibres over the circle. One of the main results of \cite[Theorem 3.27]{FriedlLueck2015b} is the following.

\begin{thm}\label{l2 torsion polytope and thurston}
	Let $M\neq S^1\times D^2$ be a compact connected aspherical $3$-manifold such that $\pi_1(M)$ satisfies the Atiyah Conjecture. Then the image of the $L^2$-torsion polytope $P_{L^2}(\widetilde{M};\pi_1(M))$ under $\norm$ is the Thurston seminorm $\|\cdot \|_T$.
\end{thm}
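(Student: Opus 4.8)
The plan is to reduce to primitive integral classes and then prove two matching inequalities. Since $\norm(P_{L^2}(\widetilde M;\pi_1(M)))$ and $\|\cdot\|_T$ are both seminorms on the finite-dimensional space $H^1(M;\R)$, hence continuous and determined by their values on rational points, it suffices to treat a fixed primitive class $\phi\in H^1(M;\Z)\s-\{0\}$ (if $b_1(M)=0$ there is nothing to prove). I would also reduce to $M$ irreducible, which is automatic here: an aspherical $3$-manifold has contractible universal cover, hence vanishing $\pi_2$, hence is irreducible by the sphere theorem. Writing $K=\ker(\phi\colon\pi_1(M)\to\Z)$, one has $\Z\pi_1(M)\cong(\Z K)_z[z^{\pm}]$ and, by \eqref{ore localisation iso}, $\D(\pi_1(M))\cong T^{-1}((\D K)_z[z^\pm])$. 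Unwinding the polytope homomorphism \eqref{polytope homomorphism} and the definition of $\norm$, the number $\norm(P_{L^2}(\widetilde M;\pi_1(M)))(\phi)$ is exactly the difference between the top and bottom $z$-degrees of a Dieudonn\'e-determinant representative $t^{-1}s$ (with $t,s\in(\D K)_z[z^\pm]$) of $\tor_u(\widetilde M;\pi_1(M))$ --- a ``$z$-degree of the universal $L^2$-torsion in the direction $\phi$''. The task is to identify this degree with $\|\phi\|_T$.

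For the upper bound I would start from an embedded oriented surface $S\subset M$ dual to $\phi$ that realises the Thurston norm, so that $\|\phi\|_T=-\chi(S)$ after discarding sphere and disk components (legitimate since $M$ is irreducible and $M\neq S^1\times D^2$). Cutting $M$ along $S$ produces a compact manifold $M_S$ from which $M$ is recovered by gluing two copies of $S$ in $\partial M_S$, and correspondingly $\pi_1(M)$ is an HNN extension --- or a finite graph of groups if $S$ is disconnected --- over the surface subgroups $\pi_1(S_i)$, with stable letter evaluating to $1$ under $\phi$. Applying the sum formula of \cref{properties} to this decomposition, but carried out over the ring $(\D K)_z[z^\pm]$ (this detour is forced because $S$ is not $L^2$-acyclic, whereas the infinite cyclic cover of $M$ does become acyclic over $\D K$), expresses $\tor_u(\widetilde M)$ through the pieces; the ``gluing'' term is an operator of the form $zA-B$ with $A,B$ automorphisms over $\D K$ acting on the cellular chains of $S$, whose Dieudonn\'e determinant is a Laurent polynomial in $z$ of $z$-width at most $\sum_i(-1)^{i+1}\cdot\#(i\text{-cells of }S)=-\chi(S)$. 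Hence $\norm(P_{L^2}(\widetilde M))(\phi)\leq\|\phi\|_T$.

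The hard part is the reverse inequality. Here my plan is to exploit that $\D(\pi_1(M))$, as a $\Q\pi_1(M)$-field, is compatible with twisted Reidemeister torsion: for any finite quotient $\pi_1(M)\twoheadrightarrow Q$ and any representation $Q\to\GL_k(\C)$ one gets a twisted Alexander polynomial $\Delta^\rho_{M,\phi}\in\C[z^{\pm}]$ (a specialisation of $\tor_u$ along an honest ring map into $k\times k$ matrices over twisted Laurent polynomials), and a comparison of torsions should show that the associated $z$-Newton polytope can only shrink relative to $P_{L^2}$, so that $\tfrac1k\cdot(\text{$z$-width of }\Delta^\rho_{M,\phi})\leq\norm(P_{L^2}(\widetilde M))(\phi)$ for every such $\rho$. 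Taking the supremum over all $\rho$ and invoking the theorem of Friedl--Vidussi that twisted Alexander polynomials detect the Thurston norm for the relevant class of $3$-manifolds then gives $\|\phi\|_T\leq\norm(P_{L^2}(\widetilde M))(\phi)$, which together with the upper bound completes the proof.

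I expect this lower bound to be the main obstacle. The upper bound is essentially a cell count plus formal properties of $\tor_u$, whereas the reverse inequality imports genuinely deep $3$-manifold input --- Gabai's sutured-manifold hierarchies behind the classical arguments of McMullen and Turaev, and the full strength of Agol and Wise through Friedl--Vidussi --- and the delicate point is to route this input through the \emph{single}, universal polytope $P_{L^2}$ rather than through individual twisted invariants; making the relevant specialisation/approximation statements precise (for instance via the formalism of \cref{subsec: specialising}, approximating $\D(\pi_1(M))$ by finite-dimensional representations) is where the real work lies. An alternative partial route --- McMullen's inequality $\|\cdot\|_A\leq\|\cdot\|_T$ with equality on fibered cones, combined with Agol's virtual fibering theorem and the behaviour of both sides under finite covers (the restriction formula of \cref{properties} and Gabai's multiplicativity of $\|\cdot\|_T$) --- settles fibered classes immediately but does not by itself reach the non-fibered ones, which is precisely the difficulty.
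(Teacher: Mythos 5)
You should first note that this paper does not prove the statement at all: it is imported verbatim as \cite[Theorem 3.27]{FriedlLueck2015b}, and every later use of it in the paper points back to Friedl--L\"uck's proof rather than reproducing one. So the only meaningful comparison is with their argument. Your outline has the same overall shape as the known proof: an ``easy'' inequality $\norm(P_{L^2}(\widetilde M))(\phi)\leq\|\phi\|_T$ obtained from a Thurston-norm-minimising surface by a Turaev-style chain-level count, and a ``hard'' reverse inequality that imports the post-Agol--Wise $3$-manifold technology. Friedl--L\"uck route the hard direction through the identification of the polytope seminorm with the twisted $L^2$-Euler characteristic of the induced infinite cyclic cover and then through virtual (quasi-)fibering and the behaviour under finite covers, rather than through Friedl--Vidussi's twisted-Alexander detection theorem as you propose, but the deep input is of the same nature in both routes.

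Judged as a proof, however, your proposal has a genuine gap, which you yourself flag: the decisive step of the lower bound --- that specialising $\tor_u(\widetilde M;\pi_1(M))$ along a finite-dimensional representation factoring through a finite quotient can only shrink the relevant $z$-width, so that $\tfrac1k(\text{degree of the twisted torsion})\leq\norm(P_{L^2}(\widetilde M))(\phi)$ --- is asserted (``a comparison of torsions should show''), not established. It cannot be extracted from the formalism of \cref{subsec: specialising}: \cref{spec criterion} and \cref{spec then good} compare epic $\Q G$-fields $\D(G)\to\D(\Gamma)$ for \emph{torsion-free elementary amenable} quotients $\Gamma$, whereas your target is a matrix ring over a commutative Laurent ring attached to a finite (hence torsion) quotient, so a genuinely different comparison (e.g.\ via Fuglede--Kadison determinants or an approximation argument) is required; one must also track the correction terms in the Friedl--Vidussi statement (the full twisted Reidemeister torsion, including the classes where it vanishes), not just the width of one twisted Alexander polynomial. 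A secondary issue: in the upper bound you ``apply the sum formula of \cref{properties}'' to the decomposition along $S$, but that formula requires the pieces to be $L^2$-acyclic, which $S$ is not; as you half-acknowledge, the computation must instead be carried out at chain level over $\D(K)_z[z^{\pm}]$, which is precisely the nontrivial Turaev/Friedl--L\"uck calculation rather than a formal consequence of \cref{properties}. So the outline has the right architecture, but both load-bearing steps remain open in your write-up.
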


Motivated by this result, we make the following definition.

\begin{defin}\label{def:Thurston for fbc}
	Let $G = F_n*_g$ for a monomorphism $g\colon F_n\to F_n$. We call the image of the $L^2$-torsion polytope $P_{L^2}(G)\in \Pol_T(H_1(G)_f)$ as defined in \cref{def: l2 polytope} under $\norm$ the \emph{Thurston seminorm on $G$} and denote it by
	\[\|\cdot \|_T\colon H^1(G;\R)\to \R\]
\end{defin}

In order for this definition to make sense, we need to argue that HNN extensions of free groups satisfy the Atiyah Conjecture.

To this end, observe that $G$ fits into the extension
\[ 0\to  \langle \! \langle F_n \rangle \! \rangle \to G \to \Z \to 0\]
By the work of Linnell (see~\cite[Theorem 10.19]{Lueck2002}), we know that the Atiyah conjecture holds for $F_n$, is stable under taking directed unions, and so holds for $\langle \! \langle F_n \rangle \! \rangle$, and is stable under taking extensions with elementary amenable quotients, and thus holds for $G$.

The proof that the terminology \emph{seminorm} in the above definition is justified needs to be postponed to \cref{indeed seminorm}.

\smallskip

In \cite{Harvey05} Harvey generalised McMullen's work and defined \emph{higher Alexander norms}
\[\delta_k\colon H^1(G;\R)\to\R\]
for any finitely presented group $G$, where $\delta_0 = \|\cdot\|_A$. While we do not need the precise definition of $\delta_k$, the following ingredient will be needed throughout the paper.

\begin{defin}\label{rational derived}
	The \emph{rational derived series}
	\[ G = G_r^{0} \supseteq G_r^{1} \supseteq G_r^{2} \supseteq \cdots \]
	of a group $G$ is inductively defined with $G_r^{k+1}$ being the kernel of the projection
	\[ G_r^k \to H_1(G_r^k)_f\]
\end{defin}

Note that the quotients $\Gamma_k := G/ G_r^{k+1}$ are torsion free and solvable, and so
\[\Wh(\Gamma_k) = 0\]
since solvable groups satisfy the $K$-theoretic Farrell--Jones Conjecture by a result of Wegner \cite{Wegner2013}. Moreover, $\Gamma_k$ satisfies the Atiyah Conjecture by the work of Linnell (see~\cite[Theorem 10.19]{Lueck2002}). Thus, given $G = F_n*_g$, \cref{def: universal} and \cref{def: l2 polytope} produce an $L^2$-torsion polytope $P_{L^2}(G, p_k)$ for the projections
\[ p_k\colon G\to \Gamma_k \]

The next result is not explicitly stated in \cite{FriedlLueck2015, FriedlLueck2015b}, but we will indicate how it directly follows from it.

\begin{thm}\label{higher Alex polytopes}
	Let $G = F_n*_g$ be a descending HNN extension and let
	\[ p_k\colon G\to \Gamma_k := G/ G_r^{k+1}\]
	be the obvious projection. Then the image of the $L^2$-torsion polytope $P_{L^2}(G, p_k)$ under $\norm$ is the higher Alexander norm $\delta_k$, unless $b_1(G) = 1$ and $k=0$.
\end{thm}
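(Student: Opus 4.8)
The plan is to reduce the statement to the corresponding result for $3$-manifolds via Harvey's own machinery, and to identify Harvey's higher Alexander norm $\delta_k$ with the seminorm coming from a suitable non-commutative Alexander polynomial over the skew-field $\D(\Gamma_k)$. Recall that Harvey defines $\delta_k(\psi)$ in terms of the degree of the (higher-order) Alexander polynomial obtained from the first homology of the cover of $BG$ associated to $p_k \colon G \to \Gamma_k$, localised so that it becomes a module over a skew-field. First I would make this localised Alexander module explicit: writing $\Gamma_k$ as an extension $1 \to K_k \to \Gamma_k \to H \to 1$ with $H = H_1(G)_f$ (for $k \geq 1$ one has $K_k = \Gamma_{k-1}'$, and for $k=0$ one has $K_0 = 1$, $\Gamma_0 = H$), the relevant module is $H_1$ of the $\Gamma_k$-cover of $BG$, base-changed along $\Z\Gamma_k \to \D(K_k) * H$, and $\delta_k(\psi)$ is read off as the difference of the maximum and minimum of $\psi$ on the Newton polytope (in $H$) of the order of this module. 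This is precisely the seminorm $\norm$ applied to the polytope of that torsion module.

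Next I would compute the universal $L^2$-torsion $\tor_u(G*_g, p_k)$ from the mapping torus description in \cref{def: universal}. Since $BG$ is a wedge of circles (for $G = F_n$) or more generally has a finite $1$-dimensional model, the $\Gamma_k$-cover $\bar T$ of the mapping torus $T$ of $Bg$ has a $2$-dimensional free $\Z\Gamma_k$-chain complex $0 \to \Z\Gamma_k^n \tolabel{\partial} \Z\Gamma_k^{n+1} \to 0$ (one extra $0$-cell and $1$-cell from the stable letter), and $\tor_u$ is the class of the maximal square submatrix of $\partial$ after deleting one column — exactly the same matrix whose minors define the Alexander ideal. Applying the polytope homomorphism $\P$ and then $\norm$, the $L^2$-torsion polytope $P_{L^2}(G, p_k)$ maps to $\norm$ of the Newton polytope of the Dieudonné determinant of that square submatrix over $\D(\Gamma_k)$; after the Ore isomorphism $\D(\Gamma_k) \cong T^{-1}(\D(K_k)*H)$ this Newton polytope is, up to translation, the Newton polytope of the order of the base-changed Alexander module. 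Hence $\norm\bigl(P_{L^2}(G,p_k)\bigr) = \delta_k$. The hypothesis $b_1(G) \geq 1$ (always true here) and the exclusion of the case $b_1(G)=1$, $k=0$ is needed precisely because in that degenerate case the relevant chain complex is $0 \to \Z\Gamma_0^n \to \Z\Gamma_0^{n+1} \to \Z \to 0$ with a surviving $H_0$, so the Alexander polynomial picks up an extra factor corresponding to $(t-1)$ and the degree computation is off by $|\psi(t)|$ — this is the source of the correction term already recorded in the Main Theorem.

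The main obstacle, and the step I would spend the most care on, is the bookkeeping that matches Harvey's localisation-and-order-of-module construction to the Dieudonné-determinant description of $\tor_u$: one must check that the "order" of a torsion module over $\D(K_k)*H$ used by Harvey coincides, as an element of $\Pol_T(H)$, with the image under $P$ of the first non-vanishing Fitting invariant, and that passing from $\Z\Gamma_k$-coefficients to $\D(K_k)*H$-coefficients does not change the Newton polytope of the determinant — this uses that $\D(K_k)$ is a skew-field (Atiyah Conjecture for the solvable group $\Gamma_k$, already invoked before the statement) so that $\D(K_k)*H$ is an Ore domain with a well-behaved degree function. A secondary point is to confirm that Harvey's $\delta_k$ is insensitive to the difference between the rational derived series quotient $\Gamma_k = G/G_r^{k+1}$ and whatever cover Harvey nominally uses, which is immediate from his definition. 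Once these identifications are in place the theorem follows by applying $\norm$ to both sides of the equality $P_{L^2}(G,p_k) = P_A$-type polytope, exactly paralleling the proof of \cref{l2 torsion polytope and thurston} in \cite{FriedlLueck2015b} with $\Gamma_k$ in place of $\pi_1(M)$.
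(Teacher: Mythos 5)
Your route is genuinely different from the paper's. The paper does not compare Harvey's module with the Dieudonn\'e determinant directly: it identifies \emph{both} sides with the twisted $L^2$-Euler characteristic, quoting the (HNN analogue of) \cite[Theorem 8.4]{FriedlLueck2015} for the equality $\delta_k(\phi) = -\chi^{(2)}(T;p_k,\phi\circ\nu_k)$, where $T$ is the mapping torus and $\nu_k\colon\Gamma_k\to H_1(G)_f$, and the argument from the proof of \cite[Theorem 3.27]{FriedlLueck2015b} (as in \cref{l2 torsion polytope and thurston}) for $\norm(P_{L^2}(G,p_k))(\phi) = -\chi^{(2)}(T;p_k,\phi\circ\nu_k)$. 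Your direct algebraic comparison via the explicit two-term chain complex is viable, but the step you call ``bookkeeping'' is not a side issue: it is the whole content of the theorem and is precisely what the citation of \cite[Theorem 8.4]{FriedlLueck2015} replaces. Concretely, you must show that Harvey's $\delta_k(\psi)$, which is defined as a dimension over the Ore skew-field of $\Z[\ker(\bar\psi\colon\Gamma_k\to\Z)]$ --- a coefficient field that depends on $\psi$, with $\ker\bar\psi$ in general strictly larger than $K_k=\ker(\Gamma_k\to H)$, $H=H_1(G)_f$ --- coincides with the thickness in direction $\psi$ of the Newton polytope of $\det_{\D(\Gamma_k)}(p_k(A(g;\S,s)))$ computed in $\D(K_k)\ast H$, minus $|\psi(s)|$; this needs both a Fox-calculus computation of Harvey's degrees and the compatibility of degrees under the two different splittings of $\Gamma_k$ (the analogue of \cite[Lemma 6.12]{FriedlLueck2015}, which the paper invokes elsewhere). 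Note also that your displayed conclusion omits the term $-P(p_k(s)-1)$ in $P_{L^2}(G,p_k)$ (cf.\ \cref{invariants for fbc}~(2)): the Newton polytope of the determinant of the square submatrix is not the polytope of the order of the Alexander-type module, but differs from it by $P(p_k(s)-1)$ (McMullen's relation $\det(p_0(A(g;\S,s)))=(p_0(s)-1)\Delta$ at $k=0$, $b_1\geq 2$); the two corrections cancel, and their failure to cancel is exactly the excluded case $b_1(G)=1$, $k=0$, which you do identify correctly. So your strategy can be carried out --- it amounts to reproving Friedl--L\"uck's Theorem 8.4 by hand --- but as written it asserts rather than establishes the central identification, whereas the paper buys it wholesale from \cite{FriedlLueck2015} at the cost of routing through the $\chi^{(2)}$ machinery.
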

\begin{proof}
	Let $\nu_k\colon \Gamma_k\to H_1(G)_f$ be the natural projection. There is an obvious analogue of \cite[Theorem 8.4]{FriedlLueck2015} for  HNN extensions of free groups which says that for $\phi\colon H_1(G)_f\to\Z$ we have an equality
	\[\delta_k(\phi) = -\chi^{(2)}(T; p_k, \phi\circ\nu_k)\]
	where $T$ denotes the mapping torus of a realisation of $g$. The right-hand side denotes the twisted $L^2$-Euler characteristic defined and examined in \cite{FriedlLueck2015}.
	
	On the other hand, a similar argument as in the proof \cref{l2 torsion polytope and thurston} (see the proof of \cite[Theorem 3.27]{FriedlLueck2015b}) shows that
	\[ \norm(P_{L^2}(G, p_k))(\phi) = \norm(\P(-\tor_u(G,p_k)))(\phi) = -\chi^{(2)} (T;p_k, \phi\circ\nu_k) \qedhere\]
\end{proof}

Motivated by this result, we introduce new terminology.

\begin{defin}
	Let $G = F_n*_g$ be a descending HNN extension and let
	\[ p_k\colon G\to \Gamma_k := G/ G_r^{k+1}\]
	be the obvious projection. Then we call  $P_{L^2}(G, p_k)$ the \emph{higher Alexander polytopes}.
\end{defin}

The Thurston and higher Alexander seminorms satisfy well-known inequalities for compact orientable $3$-manifolds by the work of McMullen and Harvey \cite{McMullen2002, Harvey05, Harvey06}. We use their characterisation in terms of polytopes to prove an analogue in the case of
descending HNN extensions of free groups.
This will be the main result of \cref{sec: inequalities}.

\subsection{The Bieri--Neumann--Strebel invariant \texorpdfstring{$\Sigma(G)$}{Sigma(G)}}\label{sec:bns}

We first recall one of the definitions of the BNS-invariant $\Sigma(G)$, see \cite[Chapter A2.1]{Strebel2012}.

\begin{dfn}[The BNS invariant]
	Let $G$ be a group with finite generating set $\S$. The positive reals $\R_{>0}$ act on $\Hom(G,\R)\setminus\{0\}$ by multiplication. The quotient will be denoted by
	\[ S(G) = \left(\Hom(G,\R)\setminus\{0\}\right)/\R_{>0}\]
	Given a class $[\phi]\in S(G)$, let $\Cay(G,S)_\phi$ denote the subgraph of the Cayley graph of $G$ with respect to $S$ that is induced by the vertex subset $\{g\in G\mid \phi(g)\geq 0\}$.
	The \emph{BNS invariant} or \emph{$\Sigma$-invariant} is the subset	
	\[\Sigma(G) = \{ [\phi]\in S(G)\mid \Cay(G,S)_\phi\text{ is connected} \}\]
\end{dfn}

Note that $S(G)$, with the quotient topology, is naturally homeomorphic to the unit sphere in $H^1(G;\R)$. The invariant $\Sigma(G)$ is an open subset thereof (see~\cite[Theorem A]{Bierietal1987}).

For rational points in $S(G)$ we have a more tangible characterisation.

\begin{thm}[{\cite[Proposition 4.3]{Bierietal1987}}]
	Let $\phi\colon G\to\Z$ be an epimorphism. Then $[-\phi]\in\Sigma(G)$ if and only if
	$G$ can be identified with a descending HNN-extension over a finitely generated subgroup, so that $\phi$ is the epimorphism induced by the HNN-extension.
\end{thm}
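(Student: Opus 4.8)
The plan is to prove the two implications separately, working from the combinatorial definition of $\Sigma(G)$ together with a Britton-style normal form for descending HNN extensions. Throughout fix $t_0 \in G$ with $\phi(t_0) = 1$; since $\Sigma(G)$ does not depend on the chosen finite generating set, I am free to pick a convenient one in each direction, and I write $K := \ker\phi$.

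For the implication from the HNN structure to $\Sigma$, suppose $G = \langle B, t \mid t^{-1}bt = \gamma(b),\ b\in B \rangle$ with $B$ finitely generated by a finite set $X$, with $\gamma$ an injective endomorphism of $B$, and with $\phi$ the induced epimorphism, so $\phi(t)=1$ and $\phi(B)=0$. I would take $\S = X \cup \{t\}$ and show that the half-space subgraph $\Gamma$ of $\Cay(G,\S)$ occurring in the definition of $\Sigma(G)$ is connected. Two ingredients drive this. First, the rewriting rules $bt = t\gamma(b)$ and $t^{-1}b = \gamma(b)t^{-1}$, valid because $\gamma(b)\in B$, bring every element of $G$ to the form $t^{a} w t^{-c}$ with $a, c \geq 0$ and $w \in B$; hence $\phi(t^{a}wt^{-c}) = a-c$, and $K = \bigcup_{n\geq 0} t^{n} B t^{-n}$ is an increasing union since $\gamma(B) \subseteq B$. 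Second, a vertex $g$ of $\Gamma$ can be written $t^{m}k$ with $m = \phi(g)$ and $k \in K$, and joining $g$ to the identity in $\Gamma$ reduces — via the $t$-ladder from $1$ to $t^{m}$ together with $t^{m}$-translates of paths — to the case $g \in K$. Finally, for $k = t^{n} w t^{-n} \in K$ one writes down the obvious path: up the $t$-ladder from $1$ to $t^{n}$, across at $\phi$-level $n$ while spelling out $w$ in the generators $X$, then down the $t^{-1}$-ladder to $t^{n}wt^{-n}$; every vertex met lies in $\Gamma$. This yields connectivity, hence the membership statement.

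For the converse, assume $\Gamma$ is connected. After replacing each $s \in \S$ with $\phi(s) = n$ by $s t_0^{-n} \in K$, I may take $\S = \{t_0\} \cup X$ with $X \subseteq K$ finite; put $B_0 = \langle X \rangle$. Reading a path inside $\Gamma$ from $1$ to a given $k \in K$ as a word in $\S^{\pm1}$ and grouping the $X$-letters by their $t_0$-height exhibits $k$ as a product of conjugates $t_0^{a_j} x_j t_0^{-a_j}$ with every $a_j$ in the range allowed by $\Gamma$; hence $K = \bigcup_{n\geq 0} t_0^{n} B_0 t_0^{-n}$. Each $t_0^{-1}xt_0$ (for $x \in X$) lies in $K$, hence in some $t_0^{N} B_0 t_0^{-N}$; choosing $N$ large enough to serve all of $X$ and setting $B = \langle \bigcup_{0 \leq j \leq N} t_0^{j} B_0 t_0^{-j} \rangle$, one checks $t_0^{-1} B t_0 \subseteq B$, so $\gamma(b) := t_0^{-1}bt_0$ is an injective endomorphism of the finitely generated group $B$; moreover $G = \langle B, t_0 \rangle$, $B \leq K$, and $K = \bigcup_n t_0^{n} B t_0^{-n}$ is again an increasing union. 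It then remains to check that the tautological epimorphism from the abstract descending HNN extension $\langle B, t_0 \mid t_0^{-1}bt_0 = \gamma(b) \rangle$ onto $G$ is an isomorphism: it embeds the base $B$ and each conjugate $t_0^{n} B t_0^{-n}$, these being honest subgroups of $G$, hence is injective on the kernel $\bigcup_n t_0^{n}Bt_0^{-n}$ of its own induced epimorphism, and therefore injective. This identifies $G$ with a descending HNN extension over the finitely generated subgroup $B$ with $\phi$ its induced epimorphism.

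I expect the converse to be the main obstacle: passing from the purely combinatorial connectivity of $\Gamma$ to the algebraic statement that $K$ is an ascending union of finitely generated $t_0$-conjugate towers takes care in tracking the heights of vertices along a path, and the concluding step — verifying that the reconstructed HNN extension has no relations beyond $t_0^{-1}bt_0 = \gamma(b)$, i.e.\ the injectivity of the natural map — is precisely where one uses that the subgroups involved are honestly embedded in $G$. An alternative is to let $G$ act on the Bass--Serre tree of the candidate splitting and recover the splitting from that action, but the direct argument above is cleaner in this setting.
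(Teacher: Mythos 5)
The paper gives no argument for this statement---it is quoted from Bieri--Neumann--Strebel---so a direct combinatorial proof like yours is an appropriate thing to supply, and both halves of your argument contain the right ideas: the ladder--across--ladder paths together with $\ker\phi=\bigcup_{n\geq 0}t^nBt^{-n}$ in one direction; in the other, reading a path as a word, grouping letters by height, saturating finitely to get a finitely generated $B$ with $t_0^{-1}Bt_0\subseteq B$, and checking injectivity of the tautological map from the abstract HNN extension on the ascending union forming its kernel. The genuine problem is the sign bookkeeping, which you never confront. The statement concerns the class $[-\phi]$, whose defining subgraph (with the paper's written definition and the usual Cayley graph with edges $\{g,gs\}$) is spanned by $\{g:\phi(g)\leq 0\}$; but every path you build---up the $t$-ladder, across at level $n\geq 0$, down again---lies in $\{g:\phi(g)\geq 0\}$, and in the converse you need the prefix-heights $a_j$ to be nonnegative, so throughout you are working with the half-space attached to $[\phi]$. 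This is not a cosmetic relabelling: the two conditions genuinely differ. For $G=\langle a,s\mid s^{-1}as=a^2\rangle\cong\Z[\tfrac12]\rtimes\Z$, a descending extension over $\langle a\rangle$ with stable letter $s$ and induced map $\phi$, your ladder argument shows the subgraph on $\{\phi\geq 0\}$ is connected, whereas $sas^{-1}$ (which has $\phi=0$) cannot be joined to $1$ inside $\{\phi\leq 0\}$: along any such path the $\Z[\tfrac12]$-coordinate changes only by integers, while $sas^{-1}$ has coordinate $\tfrac12$. So with the right-multiplication convention your argument proves the equivalence with $[\phi]$ in place of $[-\phi]$; to obtain the statement as printed you must adopt the opposite convention (left-multiplication edges, equivalently defining the subgraph by $\phi\leq 0$---the reading consistent with the Novikov-ring criterion the paper states next) and then rerun the bookkeeping, which amounts to replacing paths by their pointwise inverses. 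Either way the convention and the half-space must be pinned down explicitly; as written, your proof and the stated theorem do not match.

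A smaller point in the converse: a path from $1$ to $k\in K$ only shows that $k$ is a product of conjugates $t_0^{a_j}x_jt_0^{-a_j}$, hence that $K$ is \emph{generated by} $\bigcup_{n\geq 0}t_0^nB_0t_0^{-n}$; before any invariance is established this union is not a subgroup, so it is not equal to $K$, and accordingly $t_0^{-1}xt_0$ lies in the subgroup generated by finitely many of these conjugates rather than in a single $t_0^NB_0t_0^{-N}$ as you claim. The construction survives the correction verbatim: choose $N$ so that each $t_0^{-1}xt_0$, $x\in X$, is a word in conjugates $t_0^jB_0t_0^{-j}$ with $0\leq j\leq N$, define $B$ as you do, and the verifications $t_0^{-1}Bt_0\subseteq B$, $G=\langle B,t_0\rangle$ and $K=\bigcup_n t_0^nBt_0^{-n}$ (now an honest increasing union) go through, as does your final injectivity argument.
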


\begin{dfn}[Sikorav--Novikov completion]\label{def:sikorav-novikov}
	Let $G$ be a group and $\phi\in H^1(G;\R)$. Then the \emph{Sikorav--Novikov completion} $\widehat{\Z G}_\phi$ is defined as the set
	\[ \widehat{\Z G}_\phi := \left\{\sum_{g\in G} x_g\cdot g\mid \forall C\in\R:\; \left\{g \in G\mid \phi(g) < C\text{ and } x_g\neq 0\right\}\text{ is finite}\right\}\]
\end{dfn}


It is easy to verify that the usual convolution turns $\widehat{\Z G}_\phi$ into a ring which contains $\Z G$. The reason why we are interested in the Sikorav--Novikov completion is the following criterion to detect elements in the BNS-invariant.

\begin{thm}\label{bns criterion}
	Given a finitely generated group $G$, for a non-zero homomorphism $\phi \colon G \to \R$
	we have $[-\phi] \in \Sigma(G)$ \iff
	\[
	H_0(G; \widehat{\Z G}_\phi) =0 \textrm{ and } H_1(G; \widehat{\Z G}_\phi )=0
	\]
\end{thm}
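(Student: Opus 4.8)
The plan is to invoke the standard characterisation of the BNS invariant in terms of the finite generation of the monoid kernel (Bieri--Strebel), together with the Sikorav--Novikov interpretation of that condition, which in the topological setting is originally due to Sikorav and which in the algebraic/homological form we need is recorded e.g. in Bieri's lecture notes and in the work of Sikorav; in the present generality it is essentially \cite[Theorem 2.1]{Sikorav} reformulated. Concretely, recall that for a finitely generated group $G$ and a non-zero homomorphism $\phi$, the class $[-\phi]$ lies in $\Sigma(G)$ if and only if $\Z$ (with trivial $G$-action, or equivalently the augmentation module) is a finitely generated module over the monoid ring $\Z G_{\phi \geq 0}$, where $G_{\phi \geq 0} = \{g \in G \mid \phi(g) \geq 0\}$. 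One would then translate this into a statement about vanishing of homology with coefficients in $\widehat{\Z G}_\phi$: the completion $\widehat{\Z G}_\phi$ is flat over $\Z G$ (it is an Ore-type localisation/completion of the monoid ring, and flatness over $\Z G$ is checked directly from the definition of the completion), and the key algebraic fact is that $G_{\phi \geq 0}$ is of "type $FP_1$" over $\Z$ relative to $\phi$ precisely when $H_0$ and $H_1$ of $G$ with coefficients in the Novikov ring vanish.

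In more detail, the steps I would carry out are as follows. First, fix a finite generating set $\S$ and the associated $2$-complex (presentation complex, or just the Cayley graph for the degree-one statement); the condition that $\Cay(G,\S)_\phi$ is connected is, by \cite[Proposition 4.3]{Bierietal1987} or directly by a path-pushing argument, equivalent to the half-subgroup $G_{\phi \geq 0}$ being generated as a monoid together with finitely many relators controlling connectivity --- this is the classical "$\Sigma^1$" reformulation. Second, I would show that $H_0(G;\widehat{\Z G}_\phi) = 0$ is equivalent to the statement that the augmentation ideal becomes the whole ring after completion in the $\phi$-direction, which by a direct computation with the Novikov series (one exhibits $1$ as a $\widehat{\Z G}_\phi$-combination of $\{s - 1 : s \in \S\}$ by the usual geometric-series trick, using that $\phi$ is non-zero so some generator has positive $\phi$-value) holds if and only if one can "push to infinity"; this already captures surjectivity of the relevant chain map. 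Third, and this is the crux, $H_1(G;\widehat{\Z G}_\phi) = 0$ together with $H_0$-vanishing is equivalent, via the long exact sequence comparing $\Z G$-coefficients and $\widehat{\Z G}_\phi$-coefficients (using flatness of the completion) and the finiteness of the generating set, to the finite generation of $\ker(\Z G_{\phi\geq 0}^{|\S|} \to I)$ as a $\Z G_{\phi \geq 0}$-module, which is exactly $[-\phi] \in \Sigma(G)$.

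The main obstacle, and the step I expect to require the most care, is the third one: matching the homological vanishing over the Novikov ring with the Bieri--Strebel finiteness condition over the monoid ring. The subtlety is that $\widehat{\Z G}_\phi$ is not itself the monoid ring but a completion, so one must argue that a module over $\Z G_{\phi\geq 0}$ is finitely generated if and only if its image after $- \otimes_{\Z G_{\phi \geq 0}} \widehat{\Z G}_\phi$ is "finitely generated" in the appropriate sense, i.e. that no generators are lost or gained in the completion; this uses a Nakayama-type lemma for the completion (elements of large $\phi$-value are "topologically small"). For irrational $\phi$ the complex $\Cay(G,\S)_\phi$ still makes sense and the same argument goes through once one checks that flatness and the geometric-series computation do not use rationality of $\phi$ --- only that $\phi \neq 0$ and $\S$ is finite. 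This is why the statement can be asserted for all non-zero $\phi \in H^1(G;\R)$, not merely the rational ones, and indeed this theorem is the technical engine that lets the later sections detect $\Sigma(G)$ via the Newton polytopes of $\mathrm{tor}_u(G)$.
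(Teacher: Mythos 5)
The paper does not actually prove this statement: it is quoted as a theorem of Sikorav, with \cite{Sikorav1987} and \cite[Theorem 4.3]{FriedlTillmann2015} given as references, so a citation is all that is expected here. Your proposal instead tries to sketch a derivation, and as written the sketch contains genuine errors rather than just compressed details. The most serious one is the flatness claim. If $\widehat{\Z G}_\phi$ were flat over $\Z G$, then $H_1(G;\widehat{\Z G}_\phi)=\operatorname{Tor}_1^{\Z G}(\Z,\widehat{\Z G}_\phi)$ would vanish for \emph{every} finitely generated $G$ and every $\phi$; since $H_0(G;\widehat{\Z G}_\phi)=0$ holds automatically for every non-zero $\phi$ (your own geometric-series observation: $1-g$ is a unit in $\widehat{\Z G}_\phi$ whenever $\phi(g)>0$), your criterion would then place every non-zero class in $\Sigma(G)$, contradicting for instance $\Sigma(F_2)=\emptyset$. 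The content of the theorem is precisely that the Novikov completion is \emph{not} flat over $\Z G$ in general, and that its first Tor detects $\Sigma(G)$; any argument routed through a long exact sequence ``using flatness of the completion'' collapses.

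There are two further inaccuracies in the algebraic bridge you propose. The Bieri--Strebel/Bieri--Renz input is mis-stated: $\Z$ is cyclic, hence always finitely generated, over the monoid ring $\Z G_{\phi\geq 0}$, so ``$\Z$ is finitely generated over $\Z G_{\phi\geq 0}$'' is vacuous. The correct condition characterising membership in $\Sigma^1$ is that $\Z$ be of type $FP_1$ over $\Z G_{\phi\geq 0}$, equivalently that the augmentation ideal of the monoid ring be finitely generated as a module over it. Relatedly, the condition you match against at the end of your third step --- finite generation of the kernel of the map from the free $\Z G_{\phi\geq 0}$-module on the generators onto the augmentation ideal --- is a presentation-type ($FP_2$-flavoured) condition, one degree higher than what $\Sigma^1$ requires. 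The overall skeleton (a monoid-ring finiteness condition on one side, vanishing of Novikov $H_0$ and $H_1$ on the other, bridged by a Nakayama-type successive-approximation argument over $\Z G_{\phi\geq 0}$) is indeed how Sikorav's theorem is proved, and nothing in that route needs $\phi$ to be rational; but the bridge has to be built by direct chain-level approximation in the completion, not by flatness, and the finiteness conditions have to be stated in the correct degree. As it stands, the proposal would not assemble into a proof; citing \cite{Sikorav1987} (or reproducing the sketch in \cite[Theorem 4.3]{FriedlTillmann2015}) is the appropriate fix, exactly as the paper does.
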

\begin{proof}
	This is originally due to Sikorav \cite{Sikorav1987}, see also \cite[Theorem 4.3]{FriedlTillmann2015} for a sketch of the proof.
\end{proof}

\begin{rmk}
 In fact we are only discussing the \emph{first BNS invariant}
 \[\Sigma^1(G;\Z) = -\Sigma(G)\]
 It is easily deducible from the full result of Sikorav that for descending HNN extensions of free groups the higher BNS invariants $\Sigma^n(G;\Z)$ all coincide with $\Sigma^1(G;\Z)$.
\end{rmk}

\begin{dfn}\label{alpha}
	We define $\mu_\phi \colon \hat{\Z G}_\phi \to \Z G$ in the following way: Let
	\[x = \sum_{g\in G} x_g\cdot g\in \hat{\Z G}_\phi\]
	and let
	\[ S = \big\{g\in \supp(x)\mid \phi(g) = \min\{ \phi(\supp(x))\} \big\}\]
	Then we let
	\[ \mu_\phi(x) = \sum_{g\in S} x_g\cdot g\]
\end{dfn}

It is easy to see that $\mu_\phi$ respects the multiplication in $\hat{\Z G}_\phi$.

The following criterion to detect units in $\hat{\Z G}_\phi$ is well-known; we include a proof here for the sake of completeness. Note that the Sikorav-Novikov completion is a domain, so being left-invertible is equivalent to being right-invertible, and so is equivalent to being a unit.

\begin{dfn}
A group $G$ is called \emph{indicable} if it admits an epimorphism onto $\Z$. The group is \emph{locally indicable} if all of its finitely generated subgroups are indicable.
\end{dfn}

\begin{lem}\label{invertible}
	Let $G$ be a locally indicable group and $x \in \widehat{\Z G}_\phi$. Then $x$ is a unit in $\widehat{\Z G}_\phi$ \iff $\mu_\phi(x)$ is of the form $\pm h$ for some $h\in G$.
\end{lem}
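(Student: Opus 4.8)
The plan is to prove both implications separately, using the homomorphism $\mu_\phi$ as the main tool. For the easy direction, suppose $x$ is a unit with inverse $y$. Since $\mu_\phi$ respects multiplication and $\mu_\phi(1)=1$, we get $\mu_\phi(x)\mu_\phi(y)=1$ in $\Z G$. Now $\mu_\phi(x)$ and $\mu_\phi(y)$ are \emph{elements of $\Z G$} (not just the completion), so this says $\mu_\phi(x)$ is a unit in $\Z G$. At this point I would invoke the classical theorem of Higman that the group ring $\Z G$ of a locally indicable group has only trivial units, i.e.\ units of the form $\pm h$ with $h\in G$; hence $\mu_\phi(x)=\pm h$ as required. (Strictly this also uses that $\Z G$ has no zero divisors for locally indicable $G$, which is again Higman's theorem, so that $\mu_\phi(xy)=\mu_\phi(x)\mu_\phi(y)$ really holds with no cancellation in the minimal-$\phi$ terms.)

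For the converse, suppose $\mu_\phi(x)=\pm h$. After multiplying $x$ on the right by $\pm h^{-1}$ (a unit of $\Z G\subseteq\widehat{\Z G}_\phi$), I may assume $\mu_\phi(x)=1$; it then suffices to invert this normalised $x$. Write $x = 1 - u$ where $u=1-x$ has the property that every element of $\supp(u)$ has strictly positive $\phi$-value — this is exactly the statement that $\mu_\phi(x)=1$, once we note $1\notin\supp(u)$. The natural candidate for the inverse is the geometric series $\sum_{k\ge 0} u^k$. The key point is that this sum lies in $\widehat{\Z G}_\phi$: since $\phi(\supp(u))\subseteq\R_{>0}$, the minimum $\phi$-value occurring in $u^k$ is at least $k\cdot\epsilon$ for a fixed $\epsilon>0$ (here I would use that $\supp(u)$ is finite, so $\epsilon:=\min\phi(\supp(u))>0$), hence for any bound $C$ only finitely many $k$ contribute terms $g$ with $\phi(g)<C$; and for each such $k$ the support of $u^k$ is finite. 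Therefore the coefficient of each $g$ in $\sum_k u^k$ is a finite sum, and the whole expression defines an element of $\widehat{\Z G}_\phi$ satisfying the completeness condition in \cref{def:sikorav-novikov}. A formal computation $(1-u)\sum_{k\ge 0}u^k = 1$ (the partial sums telescope and the remainder $u^{N}$ tends to $0$ in the sense that its support eventually escapes any $\phi<C$ region) shows $x$ is a unit.

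The only genuine subtlety — and the step I would flag as the main obstacle — is making the convergence argument for the geometric series rigorous in the correct topology: one must check that rearranging the double sum $\sum_k u^k = \sum_k \sum_{g\in\supp(u^k)}(\cdots)g$ into $\sum_{g\in G}(\cdots)g$ is legitimate, which comes down to the two observations above (finiteness of $\supp(u^k)$ for each $k$, and the linear lower bound $k\epsilon$ on $\phi$-values), and that $(1-u)$ times this series really collapses to $1$. Once the bookkeeping is set up this is routine, but it is the heart of the lemma. Note that local indicability is only used in the forward direction (via Higman); the converse holds for any group and any $\phi$.
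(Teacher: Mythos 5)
Your proposal follows the same route as the paper's proof: the forward direction is exactly the paper's argument (multiplicativity of $\mu_\phi$ plus Higman's theorem that $\Z G$ has only trivial units), and your parenthetical remark that one also needs $\Z G$ to have no zero divisors for $\mu_\phi(xy)=\mu_\phi(x)\mu_\phi(y)$ to hold without cancellation is a correct precision; the converse is the paper's construction as well, namely normalise to $\mu_\phi(x)=1$ and invert $1-u$ by the geometric series (the paper phrases this as ``successively building a left-inverse'', which is the same Neumann-series argument). Your closing observation that local indicability is only used in the forward direction is also correct.

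One step, which you yourself flag as the heart of the matter, is justified from a false premise: you assume that $\supp(u)$, and then $\supp(u^k)$, are finite. Elements of $\widehat{\Z G}_\phi$ may have infinite support --- and the lemma is later applied to such elements, e.g.\ the entries involving $(y-1)^{-1}(x-1)$ in the proof of \cref{formula for BNS} --- so finiteness of the whole support is not available; \cref{def:sikorav-novikov} only gives that $\{g\in\supp(u)\mid \phi(g)<C\}$ is finite for every $C$. Fortunately everything you need survives with this weaker input. First, $\epsilon:=\inf\phi(\supp(u))>0$, because $\{g\in\supp(u)\mid\phi(g)<1\}$ is a finite set of elements with strictly positive $\phi$-value. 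Second, for a fixed $g$ with $\phi(g)<C$, any factorisation $g=g_1\cdots g_k$ with all $g_i\in\supp(u)$ forces $k\leqslant \phi(g)/\epsilon$ and $\phi(g_i)<C$ for every $i$, so all the $g_i$ range over the finite set $\{h\in\supp(u)\mid\phi(h)<C\}$; hence the coefficient of $g$ in $\sum_{k\geqslant 0}u^k$ is a finite sum, and the set of $g$ with $\phi(g)<C$ receiving a non-zero coefficient is finite, so the series lies in $\widehat{\Z G}_\phi$ and the telescoping identity $(1-u)\sum_k u^k=1$ makes sense term by term. With that substitution your argument is complete and coincides with the paper's.
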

\begin{proof}
	If $x$ has an inverse $y\in \widehat{\Z G}_\phi$, then
	\[ 1 = \mu_\phi(1) = \mu_\phi(x)\mu_\phi(y)\]
	The latter is an equation in $\Z G$, where the only units are of the form $\pm h$ since $G$ is locally indicable \cite[Theorem 13]{Higman1940}.
	
	Conversely, write $x = \sum_{g\in G} x_g\cdot g$ and write $G_k$ for the (finite) set of elements $g\in G$ with $g\in \supp(x)$ and $\phi(g) = k$. After multiplying with the unit $\mu_\phi(x)^{-1}$, we may assume without loss of generality that $G_k = \emptyset$ for $k<0$, $G_0 \neq\emptyset$, and $\mu_\phi(x) = 1$, so
	\[ x = 1 + \sum_{g\in G_1} x_g\cdot g + \sum_{g\in G_2} x_g\cdot g + \dots\]
	It is now easy to successively build a left-inverse beginning with
	\[ 1 - \sum_{g\in G_1} x_g\cdot g + \left(\sum_{g\in G_1} x_g\cdot g\right)^2 - \sum_{g\in G_2} x_g\cdot g +\dots \qedhere\]
\end{proof}

Finally we verify that the above characterisation of units in $\widehat{\Z G}_\phi$ is applicable for the groups of our interest.

\begin{lem}
\label{G is locally indicable}
Let $g \colon F_n \to F_n$ be a monomorphism. Then the associated descending HNN extension is locally indicable.
\end{lem}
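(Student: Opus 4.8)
The plan is to exhibit $G = F_n \ast_g$ as an extension of $\Z$ by a locally free (hence locally indicable) group, and then invoke closure of the class of locally indicable groups under extensions.

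First I would analyse the kernel $K$ of the epimorphism $\pi \colon G \to \Z$ induced by the HNN extension. The defining relation gives $t^{-1}F_n t = g(F_n) \subseteq F_n$, so conjugation by $t$ produces an ascending chain of subgroups
\[ F_n \subseteq tF_nt^{-1} \subseteq t^2F_nt^{-2} \subseteq \cdots \]
in which every term $t^mF_nt^{-m}$ is free of rank $n$. I claim $K = \bigcup_{m \geq 0} t^mF_nt^{-m}$. The inclusion $\supseteq$ is clear. For $\subseteq$, take $w \in K$ in Britton reduced form; since the associated subgroup of this descending HNN extension is all of $F_n$, reducedness prevents any $t^{-1}$ from being followed by a $t$, so in $w$ all occurrences of $t$ precede all occurrences of $t^{-1}$, and because $w \in K$ the two blocks have equal length, say $p$. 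Reinserting powers $t^{-i}t^i$ then rewrites $w$ as a product of elements of the subgroups $t^iF_nt^{-i}$ for $0 \leq i \leq p$; since $\langle t^iF_nt^{-i} : 0 \leq i \leq p \rangle = t^pF_nt^{-p}$, we get $w \in t^pF_nt^{-p}$. Hence $K$ is a directed union of free groups, so every finitely generated subgroup of $K$ is contained in some $t^mF_nt^{-m}$ and is therefore free; in particular $K$ is locally indicable.

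Finally I would conclude using the short exact sequence $1 \to K \to G \to \Z \to 1$, together with the fact that locally indicable groups are closed under extensions: if $H \leq G$ is finitely generated and nontrivial, then either $\pi(H) \neq 1$, in which case $\pi(H)$ is a nontrivial finitely generated subgroup of $\Z$ and so $H$ surjects onto $\Z$, or $\pi(H) = 1$, in which case $H$ is a nontrivial finitely generated subgroup of the locally indicable group $K$ and hence surjects onto $\Z$. Either way $H$ is indicable, so $G$ is locally indicable.

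The only step needing any care is the identification of $K$ as an ascending union of free groups, which is routine normal-form bookkeeping for descending HNN extensions; everything else is immediate. (Alternatively one could simply quote from the literature that ascending HNN extensions of free groups are locally indicable, but the argument above is self-contained.)
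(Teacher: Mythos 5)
Your proof is correct and follows essentially the same route as the paper: reduce to the kernel of the induced epimorphism $G \to \Z$, observe that this kernel is a directed union of conjugates $t^m F_n t^{-m}$ and hence locally free, and conclude via the extension with locally indicable quotient $\Z$. The only difference is that you spell out the Britton-reduction bookkeeping that the paper leaves implicit, which is fine.
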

\begin{proof}
Let $G = F_n \ast_g$ denote the descending HNN extension, and let $\psi$ be the induced epimorphism to $\Z$.

We start by noting that $G$ is locally indicable \iff the normal closure of $F_n$ inside $G$ is, since this normal closure is the kernel of $\psi$, and the image of $\psi$ is a free-abelian group, and thus locally indicable.
Now, since $G$ is a descending HNN extension, every finitely generated subgroup of $\ker \phi$ lies in a copy of $F_n$, which is locally indicable. Hence $G$ is locally indicable.
\end{proof}

\subsection{Fox calculus}
\label{fox derivatives}

In order to start computing, we introduce as a last tool Fox derivatives (defined by Fox in~\cite{Fox1953}).

\begin{dfn}
Let $F_n$ be a free group generated by $s_1, \dots, s_n$, and let $w$ be a word in the alphabet $\{s_1, \dots, s_n \}$. We define the \emph{Fox derivative} $\fox w {s_i} \in \Z G$ of $w$ with respect to $s_i$ inductively: we write $w = vt$ where $t$ is one of the generators or their inverses, and $v$ is strictly shorter than $w$, and set
\[ \fox w {s_i} = \left\{ \begin{array}{lcl}
                           \fox v {s_i} & & t \not \in \{s_i, s_i^{-1} \} \\
                           \fox v {s_i} + v & \textrm{ if }& t  = s_i  \\
                           \fox v {s_i}  - w & & t = s_i^{-1}  \\
                          \end{array} \right. \]
This definition readily extends first to elements $w \in F_n$, and then linearly to elements of $\Z F_n$, forming a map $\fox w {s_i}\colon \Z F_n \to \Z F_n$.
\end{dfn}

The following equation is known as the fundamental formula of Fox calculus \cite[Formula (2.3)]{Fox1953}.

\begin{prop}
\label{foxIntegration}
	Let $w \in F_n$ be any word, and let $s_1, \dots, s_n$ be a generating set of $F_n$. Then we have
	\[ \sum_{i=1}^{n} \fox{w}{s_i}\cdot (1-s_i) = 1-w \]
\end{prop}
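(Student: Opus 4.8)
The plan is to prove the identity by induction on the length of a word representing $w$, using the recursive definition of the Fox derivatives directly. Throughout, I work in $\Z F_n$ with $F_n = \langle s_1,\dots,s_n\rangle$, and I use the convention that $\partial 1/\partial s_i = 0$ (the empty decomposition), which is forced by the defining recursion since the empty word admits no splitting $w = vt$.

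\textbf{Base case.} For $w = 1$ the left-hand side is the empty sum $0$, and the right-hand side is $1 - 1 = 0$, so the formula holds. (If one prefers to start the induction at length $1$, one checks directly that for $w = s_j$ one has $\partial w/\partial s_i = \delta_{ij}$, giving $\sum_i \delta_{ij}(1-s_i) = 1 - s_j$, and for $w = s_j^{-1}$ one has $\partial w/\partial s_i = -\delta_{ij}s_j^{-1}$, giving $-s_j^{-1}(1-s_j) = 1 - s_j^{-1}$; but the length-$0$ base case already subsumes these once the inductive step is in place.)

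\textbf{Inductive step.} Suppose the formula holds for all words of length less than that of $w$, and write $w = vt$ with $t \in \{s_1,\dots,s_n\}^{\pm1}$ and $v$ strictly shorter. There are two cases according to the sign of $t$. If $t = s_k$, then by definition $\partial w/\partial s_i = \partial v/\partial s_i$ for $i \neq k$ and $\partial w/\partial s_k = \partial v/\partial s_k + v$, so
\[
\sum_{i=1}^n \fox{w}{s_i}(1-s_i) = \sum_{i=1}^n \fox{v}{s_i}(1-s_i) + v(1-s_k) = (1-v) + v - vs_k = 1 - vs_k = 1 - w,
\]
using the inductive hypothesis for $v$. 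If instead $t = s_k^{-1}$, then $\partial w/\partial s_i = \partial v/\partial s_i$ for $i \neq k$ and $\partial w/\partial s_k = \partial v/\partial s_k - w = \partial v/\partial s_k - vs_k^{-1}$, so
\[
\sum_{i=1}^n \fox{w}{s_i}(1-s_i) = \sum_{i=1}^n \fox{v}{s_i}(1-s_i) - vs_k^{-1}(1-s_k) = (1-v) - vs_k^{-1} + v = 1 - vs_k^{-1} = 1 - w.
\]
This completes the induction, and the formula holds for all words $w$; since both sides depend only on the element of $F_n$ represented by $w$, it holds for all $w \in F_n$.

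There is no real obstacle here: the argument is a direct induction, and the only point requiring the slightest care is bookkeeping the two sign cases in the inductive step and fixing the convention $\partial 1/\partial s_i = 0$ for the base case. Independence of the representing word is already built into the definition (the Fox derivative is defined on elements of $F_n$), so no separate well-definedness check is needed.
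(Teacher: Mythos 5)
Your induction is correct: both sign cases in the inductive step are handled exactly as the recursive definition of $\fox{w}{s_i}$ demands, the base case $w=1$ is right, and the concluding remark about passing from words to elements of $F_n$ is consistent with the paper's convention that the derivative "readily extends" to group elements. The paper itself gives no proof of this proposition — it simply cites Fox's original paper (Formula (2.3)) — so your argument is the standard self-contained verification of that cited formula, and nothing is missing.
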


\section{The invariants for descending HNN extensions of free groups}\label{sec: free-by-cyclic}
\label{consequences}

In this section we describe the Alexander polynomial and the universal $L^{2}$-torsion in more explicit terms for descending HNN extensions of finitely generated free groups. The computations in this chapter follow from the general properties of the invariants, but we thought it worthwhile to collect them here in order to emphasise that a close connection between the invariants should not come as a complete surprise.

Let us first observe the following.

\begin{lem}
 Let $G$ be a descending HNN extension $G = F_n\ast_g$. Pick a finite classifying space $B F_n$ for $F_n$, and a realisation $Bg\colon BF_n\to BF_n$. Then the mapping torus $T_{Bg}$ of $Bg$ is a classifying space for $G$.
\end{lem}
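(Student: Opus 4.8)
The plan is to construct a CW-model for the mapping torus of $Bg$ directly and identify its fundamental group with $G = F_n \ast_g$, then upgrade the $\pi_1$-computation to a genuine classifying-space statement by controlling the higher homotopy.

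First I would recall that the mapping torus $T_{Bg}$ of a map $f \colon X \to X$ is the pushout of $X \hookrightarrow X \times [0,1] \hookleftarrow X$ glued via $f$, equivalently $X \times [0,1] / (x,0) \sim (f(x),1)$. Applying van Kampen's theorem to this pushout (with $X = BF_n$, so $\pi_1(BF_n) = F_n$ generated by $s_1,\dots,s_n$, and $f = Bg$), one sees that $\pi_1(T_{Bg})$ is generated by $F_n$ together with a stable letter $t$ (the loop coming from the $[0,1]$-direction), subject to the relations $t^{-1} x t = (Bg)_*(x) = g(x)$ for all $x \in F_n$. This is exactly the presentation defining the descending HNN extension $F_n \ast_g$, so $\pi_1(T_{Bg}) \cong G$.

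Next I would show $T_{Bg}$ is aspherical, i.e. $\widetilde{T_{Bg}}$ is contractible. The key point is that the universal cover $\widetilde{T_{Bg}}$ fibers (up to homotopy) over $\widetilde{S^1} = \R$ — more precisely, composing the fibration $T_{Bg} \to S^1$ with the universal cover gives, after pulling back, an infinite mapping-telescope-like object: the $\ker(\psi)$-cover $\widehat{T_{Bg}}$ (where $\psi \colon G \to \Z$ is the induced epimorphism) is homotopy equivalent to the infinite mapping telescope $\cdots \to BF_n \to BF_n \to \cdots$ of copies of the lift of $Bg$ to universal covers, which are all contractible (being universal covers of $K(F_n,1)$'s — trees, homotopically). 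A telescope of contractible spaces along any maps is contractible, so $\widehat{T_{Bg}}$ is contractible, hence so is its further universal cover $\widetilde{T_{Bg}}$. Therefore $T_{Bg}$ is a $K(G,1)$. Since $BF_n$ is a finite CW-complex and the mapping torus construction only adds finitely many cells (one extra cell of dimension $d+1$ for each $d$-cell of a CW-model of $BF_n$, after making $Bg$ cellular), $T_{Bg}$ is a finite model for $BG$.

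The main obstacle is the asphericity argument: making precise that the $\ker\psi$-cover is the mapping telescope of the (contractible) universal covers of $BF_n$ under the lifts of $Bg$, and that such a telescope is contractible. One must be slightly careful that $Bg$ need only be a homotopy equivalence onto its image (not a homeomorphism), but since $g$ is a monomorphism of $F_n$ the induced map on universal covers is a $\pi_1$-injective map between trees and the telescope argument still goes through — alternatively one can cite the standard fact that the mapping torus of a $\pi_1$-injective self-map of an aspherical space is aspherical. Everything else (van Kampen, finiteness, cellular approximation) is routine.
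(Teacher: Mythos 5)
Your van Kampen computation of $\pi_1(T_{Bg})$ and the finiteness remark are fine and match what the paper treats as well known; the problem is in the asphericity step, and it is a genuine error rather than a presentational one. The cover $\widehat{T_{Bg}}$ of $T_{Bg}$ corresponding to $\ker\psi$ is homotopy equivalent to the infinite mapping telescope of copies of $BF_n$ \emph{itself}, glued along $Bg$ --- not of copies of $\widetilde{BF_n}$ glued along lifts of $Bg$. In particular $\widehat{T_{Bg}}$ is never contractible for $n\geq 1$: its fundamental group is $\ker\psi\cong\operatorname{colim}\bigl(F_n\xrightarrow{g}F_n\xrightarrow{g}\cdots\bigr)$, a non-trivial (for non-surjective $g$, infinitely generated locally free) group. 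The telescope of universal covers that you describe is indeed contractible, but it is not a cover of $T_{Bg}$ at all: already for $F_1=\Z$ and $g(x)=x^2$ the natural map from the telescope of lines to the telescope of circles fails to be a local homeomorphism along the gluing loci (three sheets meet downstairs, two upstairs), and in general the universal cover of $T_{Bg}$ is a tree of copies of $\widetilde{BF_n}\times[0,1]$ over the Bass--Serre tree of the HNN splitting, not a linear telescope. Consequently the sentence ``$\widehat{T_{Bg}}$ is contractible, hence so is its further universal cover'' does not make sense as stated.

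The argument is repairable with the ingredients you already have: since homotopy groups commute with the colimit defining the telescope, $\pi_k(\widehat{T_{Bg}})\cong\operatorname{colim}\pi_k(BF_n)=0$ for $k\geq 2$, and higher homotopy groups are unchanged under passing to covers, so $\pi_k(T_{Bg})=0$ for $k\geq 2$; combined with $\pi_1(T_{Bg})\cong G$ this gives the finite $K(G,1)$. Your fallback --- citing that the mapping torus of a $\pi_1$-injective self-map of an aspherical complex is aspherical --- is legitimate, but be aware that this citation is essentially the content of the lemma, so it should not carry the weight of a proof you present as self-contained. For comparison, the paper argues one cover higher and more directly: any map from a compact space into $\widetilde{T_{Bg}}$ can be homotoped into a single copy of $\widetilde{BF_n}$, which is contractible; this is the same compactness/colimit principle, applied in the universal cover where no identification of intermediate covers is needed.
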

\begin{proof}
It is well-known that $\pi_1(T_{Bg}) = G$. For the higher homotopy groups we observe that any map $C \to \widetilde{T_{Bg}}$ with compact domain $C$ can be homotoped to a map whose image lies in a copy of $\widetilde {BF_n}$, which is contractible.
\end{proof}


We will always view an $m\times n$-matrix $A$ over a ring $R$ as an $R$-homomorphism $R^m\to R^n$ by \emph{right}-multiplication since we prefer working with \emph{left}-modules.

For a monomorphism $g \colon F_n\to F_n$, let $G = F_n\ast_g$,  and let $s_1, \dots, s_n$ denote generators of $F_n$, and $t$ the stable letter of the HNN extension. The \emph{Fox matrix of $g$} is
\[F(g) = \left(\fox{g(s_i)}{s_j}\right)_{i,j = 1}^n\in {\Z F_n}^{n\times n}\]
 Put $\S = \{s_1,\dots,s_n,t\}$. We will often consider the matrix
\[ A(g; \S) =
\begin{pmatrix}
& & &   s_1 - 1 \\
& \operatorname{Id} - t\cdot F(g) &  &  \vdots  \\
&  &  & s_n-1
\end{pmatrix} \in \Z G^{n\times(n+1)}
\]
Given $s\in\S$, we let $A(g;\S, s)$ be the square matrix obtained from $A(g;\S)$ by removing the column which contains the Fox derivates with respect to $s$. Let $\Gamma_k = G/ G_r^{k+1}$, where $G_r^k$ are the subgroups of the rational derived series as introduced in \cref{rational derived}. Denote by $p_k\colon G\to \Gamma_k$ the projection and denote the ring homomorphisms $p_k\colon \Z G\to \Z\Gamma_k$ by the same letter. Notice that \[\Gamma_0 = H_1(G)_f =: H\]

The following theorem summarises the various invariants introduced in \cref{chapter: preliminaries} for descending HNN extensions of finitely generated free groups.

\begin{thm}\label{invariants for fbc}
	With the notation above, let $G = F_n\ast_g$ and $s\in\S$. Then:
	\begin{enumerate}
		\item\label{fbc:l2tor} For the universal $L^2$-torsion we have
			\[	\tor_u(G) =  -[\Z G^n\tolabel{A(g;\S, s)}\Z G^n] + [\Z G\tolabel{s-1}\Z G]\]
			and so
			\[  P_{L^2}(G) = P( \det_{\D(G)}(A(g;\S,s)) ) - P(s-1)\in \Pol_T(H)\]
		\item\label{fbc:higher alex} If $p_k(s)\neq 0$, then for the universal $L^2$-torsion relative to $p_k$ we have
			\[	\tor_u(G;p_k) =  -[\Z\Gamma_k^n\overset{p_k(A(g;\S,s))}{\longrightarrow} \Z\Gamma_k^n] + [\Z\Gamma_k\tolabel{p_k(s)-1}\Z\Gamma_k]\]
			and so
			\[P_{L^2}(G,p_k) = P( \det_{\D(\Gamma_k)}(p_k(A(g;\S,s)))) - P(p_k(s)-1)\in \Pol_T(H)\]
		\item\label{fbc:alex} In $\Wh^w(H) \cong (T^{-1} \Q H)^\times/\{ \pm h\mid h\in H\}$ we have
		\begin{equation*}
		\Delta_A(G) = \begin{cases}
			-\tor_u(G;p_0)  & \text{ if } b_1(G)\geq 2\\
			-\tor_u(G;p_0) \cdot (p_0(t)-1)  &\text{ if }  b_1(G)=1
		\end{cases}
		\end{equation*}
		\item\label{fbc:bns} Let $\phi\in\Hom(G,\R)$. If $\phi(s) \neq 0$, then $[-\phi]\in\Sigma(G)$ if and only if the map
		\[A(g;\S, s) \colon \widehat{\Z G}_\phi^n \to \widehat{\Z G}_\phi^n\]
		is surjective, or equivalently, bijective.
	\end{enumerate}
\end{thm}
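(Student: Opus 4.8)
The plan is to derive all four assertions from a single object, the cellular $\Z G$-chain complex $C_*$ of $\widetilde{T_{Bg}}$, where $T_{Bg}$ is the mapping torus of a realisation of $g$ on the one-vertex model of $BF_n$. First I would describe $T_{Bg}$: it has one $0$-cell, $1$-cells $s_1,\dots,s_n,t$, and $2$-cells attached along the HNN relators $r_i = s_i\,t\,g(s_i)^{-1}t^{-1}$. Computing Fox derivatives and using $r_i=1$ (equivalently $s_i t g(s_i)^{-1}=t$) in $\Z G$ gives $\fox{r_i}{s_j}=\delta_{ij}-t\,\fox{g(s_i)}{s_j}$ and $\fox{r_i}{t}=s_i-1$, so that, in the CW-bases, the second boundary map of $\widetilde{T_{Bg}}$ is exactly $A(g;\S)$ and the first is $e_i\mapsto s_i-1$ ($i\le n$), $e_{n+1}\mapsto t-1$. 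Since $T_{Bg}$ is a finite $K(G,1)$ and $G$ satisfies the Atiyah Conjecture, $C_*$ is $L^2$-acyclic.

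For (1) and (2), fix $s\in\S$. As $G$ (resp.\ $\Gamma_k$) is torsion-free, $s$ (resp.\ the nontrivial element $p_k(s)$) has infinite order, so $s-1$ (resp.\ $p_k(s)-1$) becomes invertible over the relevant skew-field. Hence the subcomplex $D_*\subseteq C_*$ spanned in degrees $1,0$ by the $s$-cell and the $0$-cell is $L^2$-acyclic with torsion $[\Z G\xrightarrow{s-1}\Z G]$, while the quotient complex is $0\to\Z G^n\xrightarrow{A(g;\S,s)}\Z G^n\to0$ in degrees $2,1$, with torsion $-[A(g;\S,s)]$. Additivity of the universal $L^2$-torsion over this short exact sequence of based complexes (a standard property of Whitehead-type torsion, see \cite[Theorem 2.5]{FriedlLueck2015b}) gives the stated formula for $\tor_u(G)$, and applying the polytope homomorphism $\P$, which sends $[M]$ to $P(\det_{\D(G)}(M))$, gives the polytope identity. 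Part (2) is the identical argument carried out after applying $p_k$ and working over $\D(\Gamma_k)$.

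For (3) I would take $X$ to be the presentation $2$-complex on $\S$ with relators $r_1,\dots,r_n$, so that the $\Z H$-chain complex of $(\bar X,\bar x)$ is $0\to\Z H^n\xrightarrow{p_0(A(g;\S))}\Z H^{n+1}\to0$; then the Alexander module is $\coker(p_0(A(g;\S)))$ and the Alexander ideal is generated by the $n\times n$ minors $\det p_0(A(g;\S,s))$, $s\in\S$. By \cref{foxIntegration}, $p_0(A(g;\S))$ annihilates the column vector $v$ whose entries are $p_0(s)-1$; over the UFD $\Z H$ the vector of signed maximal minors also spans the (rank-one) kernel, so $\det p_0(A(g;\S,s))=c\cdot(p_0(s)-1)$ for a fixed $c\in\Z H$, and $\Delta_A(G)$ equals, up to a unit, $c\cdot\gcd_{s\in\S}(p_0(s)-1)$. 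If $b_1(G)\ge2$ this gcd is a unit --- McMullen's observation, using that the augmentation ideal of $\Z[\Z^{b_1}]$ is non-principal for $b_1\ge2$ --- so $\Delta_A(G)=c=\det p_0(A(g;\S,s))/(p_0(s)-1)$, which is $-\tor_u(G;p_0)$ by (2). If $b_1(G)=1$ then $p_0(s_j)=1$ for $j\le n$ while $p_0(t)\neq1$, so $v$ has all entries zero except the last, forcing every maximal minor except $\det p_0(A(g;\S,t))$ to vanish; hence $\Delta_A(G)=\det p_0(A(g;\S,t))=-\tor_u(G;p_0)\cdot(p_0(t)-1)$.

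For (4), $H_*(G;\widehat{\Z G}_\phi)$ is computed by $\widehat{\Z G}_\phi\otimes_{\Z G}C_*$. Since $\phi(s)\neq0$, \cref{G is locally indicable} and \cref{invertible} show that $s-1$ is a unit of $\widehat{\Z G}_\phi$; therefore $\partial_1$ is surjective, and splitting off the contractible subcomplex on the $s$-cell and the $0$-cell exactly as in (1) gives $H_0(G;\widehat{\Z G}_\phi)=0$ and $H_1(G;\widehat{\Z G}_\phi)=\coker\!\big(A(g;\S,s)\colon\widehat{\Z G}_\phi^n\to\widehat{\Z G}_\phi^n\big)$. By \cref{bns criterion}, $[-\phi]\in\Sigma(G)$ iff both groups vanish, i.e.\ iff $A(g;\S,s)$ is surjective; surjectivity is equivalent to bijectivity because $\widehat{\Z G}_\phi$ is weakly finite for the locally indicable group $G$, so a surjective endomorphism of a finitely generated free module over it is injective. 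The routine parts here are the Fox-calculus identities and the sign bookkeeping in the torsion; I expect the genuine obstacles to be the gcd computation in (3) (classical but fiddly) and the two ring-theoretic inputs about $\widehat{\Z G}_\phi$ in (4) --- invertibility of $s-1$ and weak finiteness --- each of which relies on local indicability of $G$.
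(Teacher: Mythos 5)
Your proposal is correct and takes essentially the same route as the paper: the mapping-torus chain complex with Fox derivatives, splitting off the $(s-1)$-subcomplex and invoking additivity of $\tor_u$ for (1)--(2), the minor/gcd argument of McMullen (which the paper simply cites as Theorem~5.1 of \cite{McMullen2002}, and which you spell out) for (3), and Sikorav's criterion plus invertibility of $s-1$ and weak finiteness of $\widehat{\Z G}_\phi$ for (4). The only point of divergence is bookkeeping: the weak (stable) finiteness of $\widehat{\Z G}_\phi$ is not a formal consequence of local indicability and is instead quoted in the paper from Kochloukova's theorem \cite{Kochloukova2006}, so you should cite that result rather than attribute it to local indicability.
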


\begin{proof}
	(\ref{fbc:l2tor}) We write the relations defining the descending HNN extension $G = F_n\ast_g$ as
	\[R_i = s_itg(s_i)^{-1}t^{-1}\]
	If we let $BF_n$ be the wedge of $n$ circles, then the $\Z G$-chain complex of the mapping torus $T_{Bg}$ has the form
	\[ C_* = 0\to\Z G^n\tolabel{c_2}\Z G^{n+1}\tolabel{c_1} \Z G\to 0 \]
	where $c_1$ is given by the transpose of
	\[\begin{pmatrix}
	s_1 - 1 & s_2 -1 & \dots & s_n -1 & t-1
	\end{pmatrix}
	\]
	and $c_2$ is given by the $n\times(n+1)$ matrix containing the Fox derivatives $\fox{R_i}{s_j}$ and $\fox{R_i}{t}$. This is precisely the matrix $A(g;\S)$ since
	
	\begin{align*}
	\fox{R_i}{s_j} &= \delta_{ij} + s_it \left(\fox{g(s_i)^{-1}}{s_j} + g(s_i)^{-1}\cdot \fox{t^{-1}}{s_j}\right) \\
	&= \delta_{ij} -s_itg(s_i)^{-1}\cdot\fox{g(s_i)}{s_j} \\
	&= \delta_{ij} - t \cdot\fox{g(s_i)}{s_j}\\
	\fox{R_i}{t} &= s_i - s_i tg(s_j)^{-1}t^{-1} = s_i -1
	\end{align*}
	where $\delta_{ij}$ denotes the Kronecker delta.
	
	Consider the $\Z G$-chain complexes
	\[ \xymatrix{
	B_* = &  0 \ar[r] &  0\ar[r]                  & \Z G \ar[r]^{s-1} & \Z G \ar[r] & 0\\
	D_* = &  0\ar[r]  & \Z G^n \ar[r]^{A(g;\S,s)} & \Z G^n\ar[r]      & 0 \ar[r]    & 0
	} \]
	We obtain a short exact sequence of $\Z G$-chain complexes
	\[ 0 \to B_*\to C_*\to D_* \to 0\]
	Since $B_*$ is $L^2$-acyclic by \cite[Theorem 3.14 (6) on page 129 and (3.23) on page 136]{Lueck2002}, $D_*$ is also $L^2$-acyclic and we have the sum formula \cite[Lemma 1.9]{FriedlLueck2015b}
	\begin{align*}
	\tor_u(G) & = \tor_u(C_*) \\
	& = \tor_u(B_*) + \tor_u(D_*)\\
	& =  [\Z G\tolabel{s-1}\Z G]  -[\Z G^n\tolabel{A(g;\S,s)}\Z G^n]
	\end{align*}	
	
	The statement
	\[P_{L^2}(G) = P( \det_{\D(G)}(A(g;\S,s)) ) - P(s-1)\in \Pol_T(H)\]
	is obtained by applying the polytope homomorphism $\P\colon \Wh^w(G)\to \Pol_T(G)$.
		
	\smallskip\noindent
	(\ref{fbc:higher alex}) This follows exactly as	(\ref{fbc:l2tor}) since the chain complex used to define $\tor_u(G;p_k)$ is
		\[ 0\to\Z \Gamma_k^n\tolabel{p_k(c_2)}\Z\Gamma_k^{n+1}\tolabel{p_k(c_1)} \Z \Gamma_k\to 0\]
		
	\smallskip\noindent
	(\ref{fbc:alex}) A $\Z H$-presentation of the Alexander module $A(G)$ is given by
		\[ \Z H^n \tolabel{p_0(A(g;\S))}\Z H^{n+1}\to A(G)\to 0\]
		We now apply the same argument as in the proof of \cite[Theorem 5.1]{McMullen2002}: If $b_1(G)\geq 2$, then this yields
		\[ \det(p_0(A(g;\S, s))) = (p_0(s)-1)\cdot \Delta_A(G)\]
		for all $s\in\S$ such that $p_0(s) \neq 0$.
		
		If $b_1(G) =1$, then
		\[ \det(p_0(A(g;\S, t))) = \Delta_A(G)\]
		Since the isomorphism $\Wh^w(G)\cong T^{-1}(\Z H)$ is given by the determinant over $T^{-1}(\Z H)$, the claim follows from part (\ref{fbc:higher alex}) for $k=0$ (since $\Gamma_0 = H$).

	\smallskip\noindent
	(\ref{fbc:bns})	By \cref{bns criterion}, $[-\phi]\in\Sigma(G)$ if and only if \[H_0(G; \widehat{\Z G}_\phi) =0 \textrm{ and } H_1(G; \widehat{\Z G}_\phi )=0\]
	The chain complex computing these homology groups is
	\[  0\to\widehat{\Z G}_\phi^n\tolabel{c_2}\widehat{\Z G}_\phi^{n+1}\tolabel{c_1} \widehat{\Z G}_\phi\to 0\]
	We assume $\phi(s) \neq 0$ for a fixed $s\in\S$. Since $G$ is locally indicable (by \cref{G is locally indicable}), \cref{invertible} shows that $s-1$ is invertible in $\widehat{\Z G}_\phi$, which implies that $c_1$ is surjective, and therefore $H_0(G; \widehat{\Z G}_\phi) =0$ for any non-zero $\phi$.
	
	Assume without loss of generality that $s = s_1$. Then the kernel of $d_1$ is the set
	\[ K = \left\{(x_1,\dots, x_{n+1})\in \widehat{\Z G}_\phi^{n+1} \mid \sum_{k=2}^{n+1} x_k (s_k-1) (s_1-1)^{-1}= -x_1 \right\}\]
	By forgetting the first coordinate we see that $K$ is $\widehat{\Z G}_\phi$-isomorphic to $\widehat{\Z G}_\phi^{n}$, and
	\[H_1(G; \widehat{\Z G}_\phi )=0\]
	is equivalent to
	\[A(g;\S,s) \colon \widehat{\Z G}_\phi^{n} \to \widehat{\Z G}_\phi^{n}\]
	being surjective.
	
	Since $\hat{\Z G}_\phi$ is stably finite (this was shown by Kochloukova~\cite{Kochloukova2006}), an epimorphism $\hat{\Z G}_\phi^n \to \hat{\Z G}_\phi^n$ is necessarily an isomorphism.
\end{proof}

\begin{rmk}\label{non-degenerate}
		Note that the above proof shows (and uses) that $A(g;\S,s)$ (resp. $p_k(A(g;\S,s)$) is invertible over $\D(G)$ (resp. $\D(\Gamma_k)$). We will henceforth call a $\Z G$-square matrix with this property \emph{non-degenerate}.
\end{rmk}

\begin{ex}\label{example:l2tor}
	Using part (\ref{fbc:l2tor}) of the above theorem we compute the $L^2$-torsion polytope in a few examples. We use $a,b, c,\dots$ to denote some fixed generators of $F_n$.
	\begin{enumerate}
		\item For arbitrary $n$ and $g = \text{id}$ the polytope is just a line of length $n-1$ between $0$ and $t^{n-1}$.
		\item For $g\colon F_2\rightarrow F_2, \; x\mapsto a^kxa^{-k}$ for some $k\in \Z$, we get a tilted line between $0$ and $a^kt$.
		\item For $g\colon F_3\rightarrow F_3,\; a\mapsto b, \: b \mapsto c, \: c\mapsto a[b,c]$ we get a triangle as shown below.
		\begin{figure}[h]
			\begin{tikzpicture}[baseline=2.6mm, x=.8cm, y=.8cm, domain=-1:1]
			\draw[thick] (0,0) -- (0,1);
			\draw (0,0) node{$\bullet$} node[right]{$0$};
			\draw (0,1) node{$\bullet$} node[right]{$t^{n-1}$};
			\end{tikzpicture} \hspace{8mm}
			\begin{tikzpicture}[baseline=2.6mm, x=0.8cm, y=.8cm, domain=-1:1]
			\draw[thick] (0,0) -- (3,1);
			\draw (0,0) node{$\bullet$} node[left]{$0$};
			\draw (3,1) node{$\bullet$} node[right]{$a^kt$};
			\end{tikzpicture} \hspace{8mm}
			\begin{tikzpicture}[baseline=-1mm, x=.6cm, y=.6cm, domain=-1:1]
			\draw[thick, fill = purple] (0,1) -- (0,-1) -- (2,0) -- (0,1);
			\draw (0,1) node{$\bullet$} node[left]{$t^2$};
			\draw (0,-1) node{$\bullet$} node[left]{$0$};
			\draw (2,0) node{$\bullet$} node[right]{$a^2t$};
			\end{tikzpicture}
			\caption{The $L^2$-torsion polytopes in \cref{example:l2tor}}
		\end{figure}
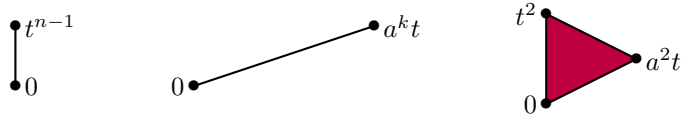
	\end{enumerate}
\end{ex}

More importantly, we can now show that the $L^2$-torsion polytope of free group HNN extensions induces indeed a seminorm on the first cohomology.

\begin{cor}\label{indeed seminorm}
	Let $G = F_n\ast_g$. Then the Thurston seminorm
	\[\|\cdot \|_T\colon H^1(G;\R)\to \R\]
	as defined in \cref{def:Thurston for fbc} is indeed a seminorm.
\end{cor}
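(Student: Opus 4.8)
The goal is to show that the function $\|\cdot\|_T$ defined via $\norm(P_{L^2}(G))$ is genuinely a seminorm, i.e. that it is finite-valued, non-negative, and satisfies the triangle inequality. The subtlety (as the paper already flags for general elements of $\Pol_T(H)$) is that $\norm$ applied to a \emph{difference} of polytopes need not be a seminorm — so the key is to exhibit $P_{L^2}(G)$ as represented by an \emph{honest single polytope}, not merely a formal difference. Once we know $P_{L^2}(G) = P(Q)$ for an actual integral polytope $Q$, the seminorm property is immediate: $\|\phi\|_{P_{L^2}(G)} = \|\phi\|_Q = \sup\{\phi(x) - \phi(y) \mid x,y \in Q\}$ is visibly finite, non-negative, homogeneous, and subadditive in $\phi$ (since the supremum of a sum is at most the sum of suprema).

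So the plan is to extract the single-polytope representative from \cref{invariants for fbc}(\ref{fbc:l2tor}). That formula gives
\[
P_{L^2}(G) = P(\det_{\D(G)}(A(g;\S,s))) - P(s-1) \in \Pol_T(H).
\]
Here $P(s-1)$ is the one-point polytope $\{0\}$ unless $p_0(s) \neq 0$, in which case it is the segment $[0, p_0(s)]$. Choosing $s = t$ the stable letter, $p_0(t)$ generates a $\Z$-summand of $H$, so $P(t-1)$ is a genuine edge. The term $P(\det_{\D(G)}(A(g;\S,t)))$ is, by the construction of the polytope homomorphism in \cref{sec: l2 torsion polytope}, of the form $P(a) - P(b)$ where $a^{-1}b$ (or rather $b^{-1}a$) is the image of $\det_{\D(G)}(A(g;\S,t))$ under the Ore-localisation isomorphism $j$; a priori this too is only a formal difference. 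The main work is therefore to argue that after cancellation the total expression $P(\det_{\D(G)}(A(g;\S,t))) - P(t-1)$ lies in the image of the monoid of genuine polytopes inside $\Pol_T(H)$.

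I expect this to follow from the interpretation of $-\tor_u(G)$ as the universal $L^2$-torsion of the mapping torus $T_{Bg}$, which for a \emph{two-term} based free chain complex (after the splitting $C_* \cong B_* \oplus D_*$ used in the proof of \cref{invariants for fbc}) is represented by the single element $A(g;\S,t)$ of $\D(G)^\times$ up to the unit $t-1$; and the key point is that for a descending HNN extension the relevant determinant is a genuine element of $\D(K) \ast H$ — i.e. a twisted Laurent polynomial over $\D(K)$, not a fraction — whose support gives an honest polytope. Concretely, one observes that $A(g;\S)$ has entries in $\Z G = \Z K \ast H$ that are polynomial in the $H$-direction (the Fox derivatives of $g(s_i)$ are words in $F_n \subseteq K$, multiplied by at most one power of $t$), so the Dieudonné determinant, computed via the canonical representative over the Ore field $T^{-1}(\D(K)\ast H)$, is a quotient $t'^{-1}s'$ with $s', t' \in \D(K) \ast H$; the $P(t-1)$ correction is exactly what cancels the denominator $t'$, and one checks $P(s') - P(t') - P(t-1) = P(s'') \geq 0$ for some $s'' \in \D(K)\ast H$.

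The main obstacle is precisely this last cancellation/positivity step: showing that the formal difference genuinely reduces to a single polytope. If a clean direct argument is elusive, the fallback is to invoke the general structural result (already cited in the excerpt, e.g.\ via \cite{FriedlLueck2015b} and the relationship of $P_{L^2}$ to $L^2$-Euler characteristics in \cref{higher Alex polytopes}) that the $L^2$-torsion polytope of a mapping torus of a free group endomorphism is a single polytope — equivalently, that $-\tor_u(G)$ has non-negative "polytope part" because $G$ is (locally) indicable and the chain complex has the special two-term shape after splitting off the acyclic piece $B_*$. Granting that $P_{L^2}(G) = P(Q)$ for an integral polytope $Q \subseteq H \otimes \R$, finiteness of $\|\phi\|_T = \|\phi\|_Q$ is clear, $\|\phi\|_Q \geq 0$ since one may take $x = y$, homogeneity $\|\lambda \phi\|_Q = |\lambda|\,\|\phi\|_Q$ is immediate, and $\|\phi + \psi\|_Q \leq \|\phi\|_Q + \|\psi\|_Q$ follows from $\sup(f+g) \leq \sup f + \sup g$; this completes the proof that $\|\cdot\|_T$ is a seminorm.
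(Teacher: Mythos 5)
Your deduction ``if $P_{L^2}(G)$ is represented by a single polytope $Q$, then $\norm(Q)$ is a seminorm'' is fine, but the whole weight of your argument rests on the single-polytope claim, and that is exactly where the gap is. Nothing in the paper or in the cited literature gives you this for a general descending HNN extension $F_n\ast_g$: the structural fact you propose as a fallback (that the $L^2$-torsion polytope of a mapping torus of a free group endomorphism is always an honest polytope) is not contained in \cite{FriedlLueck2015b} nor implied by \cref{higher Alex polytopes}; in this paper it is only established for UPG automorphisms (\cref{l2-torsion of upg}), as a nontrivial theorem. Your direct route also has two unproved steps. First, $P(\det_{\D(G)}(A(g;\S,t)))$ is itself a priori only a formal difference $P(v)-P(u)$ coming from a fraction $u^{-1}v$ in the Ore localisation $T^{-1}(\D(K)\ast H)$; the claim that the Dieudonn\'e determinant ``already lives in $\D(K)\ast H$'' is only available in the rank-one situation (a Euclid's algorithm argument over twisted Laurent polynomials in one variable), not over $H=H_1(G)_f$ of higher rank, where eliminating entries introduces genuine denominators. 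Second, even granting a polynomial representative, subtracting $P(t-1)$ leaves a genuine polytope only if the segment $P(t-1)$ is a Minkowski summand of $P(\det)$, which you assert (``one checks'') but do not prove; this cancellation is precisely the hard content, not a routine verification.

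The paper avoids the single-polytope question entirely, and you could too. Its proof fixes a rational class $\phi$ and chooses a basis $s_1,\dots,s_n$ of $F_n$ with $\phi(s_1)=0$; by \cref{invariants for fbc}(1) one may compute $\|\cdot\|_T$ using the matrix $A(g;\S,s_1)$, so that the subtracted term contributes $|{\cdot}(s_1)|$. By a theorem of Friedl--Harvey (applied with coefficient skew-field $\D(K)$), the polytope of $\det_{\D(G)}(A(g;\S,s_1))$ alone induces a seminorm $\|\cdot\|_{T'}$, even though it may be a formal difference; since $\phi(s_1)=0$ one gets $\|\phi\|_T=\|\phi\|_{T'}\geq 0$, and for any $\psi$ the identity $\|\psi\|_T=\|\psi\|_{T'}-|\psi(s_1)|$ together with $|(\phi+\psi)(s_1)|=|\psi(s_1)|$ yields the triangle inequality at $\phi$. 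Rational classes done, the real case follows by continuity of $\norm$. So either supply a genuine proof of the single-polytope property (which is much more than this corollary needs) or adopt the choice-of-generator trick plus the Friedl--Harvey seminorm input; as written, your argument does not close.
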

\begin{proof}
	As a difference of seminorms it is clear that $\|\cdot \|_T$ is $\R$-linear and continuous.
	
	First let $\phi\in H^1(G;\Q)$ be a rational class.
	We easily find a generating set $s_1, \dots , s_n$ of $F_n$ such that $\phi(s_1)=0$. We add a stable letter to this set, and form a generating set $\S$ for $G$.
	
	We get from the previous theorem
	\[      \tor_u(G) =  -[\Z G^n\tolabel{A(g;\S, s_1)}\Z G^n] + [\Z G\tolabel{s_1-1}\Z G]\]
	
	By \cite[Theorem 2.2]{FriedlHarvey2007} of Friedl--Harvey applied to $\mathbb{K} = \D(K)$, the polytope $P(\det_{\D(G)}(A(g;\S, s_1)))$ defines a seminorm on $H^1(G;\R)$ which we denote by $\|\cdot\|_{T'}$. Then, since $\phi(s_1) = 0$, we have
	\[	\|\phi\|_T = \|\phi\|_{T'} \geq 0\]
	and for any $\psi\in H^1(G;\R)$
	\begin{align*}
		\|\phi+\psi\|_T &=  \|\phi +\psi\|_{T'} - |(\phi +\psi)(s_1)| \\
									&\leq \|\phi\|_{T'} + \|\psi\|_{T'} - |\psi(s_1)|\\
									&= \|\phi\|_T + \|\psi\|_T
	\end{align*}
	This finishes the proof for rational classes.
	
	The general case directly follows by the continuity of $\|\cdot\|_T$.
\end{proof}

\section{Thurston, Alexander and higher Alexander norms}\label{sec: inequalities}

In this section we are going to extend the inequalities between the Alexander norm, the higher Alexander norms of Harvey, and the Thurston norm from the setting of $3$-manifolds to that of free-by-cyclic groups. Specifically, 
we will prove an analogue of McMullen's~\cite[Theorem 1.1]{McMullen2002} and Harvey's~\cite[Theorem 10.1]{Harvey05} for the newly defined Thurston norm of descending HNN extensions of free groups.

The key technical tool used is the notion of a $\Phi$-specialising group, introduced in \cref{subsec: specialising}.

\begin{prop}
\label{spec then good}
Let $R$ be a ring, and let $R_s[z^\pm]$ be a ring of twisted Laurent polynomials determined by an automorphism $s\colon R\to R$. Let $\mathcal D$ and $\mathcal D'$ be skew-fields and $t\colon \D\to\D, \: t'\colon \D'\to\D'$ automorphisms. Let $\beta\colon R \to \mathcal D$ and $\beta'\colon R \to \mathcal D'$ be two $R$-fields such that $\beta\circ s =  t \circ\beta$ and $\beta'\circ s =  t' \circ\beta'$. Suppose that there is a specialisation $(S,\sigma)$ from $\D$ to $\D'$, with $S$ preserved by $t$ and $t' \circ \sigma = \sigma \circ t$.
Then for any square matrix $M$ over $S_t[z]$, we have
\[
 \deg \det_{\D} M \otimes \D \geqslant \deg \det_{\D'} M\otimes \D'
\]
where $\deg$ denotes the degree of Laurent polynomials in $z$.
\end{prop}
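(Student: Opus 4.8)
The plan is to reduce the statement about degrees of Dieudonné determinants of polynomial matrices to a statement about invertibility of matrices over the relevant skew-fields, and then feed that into Cohn's specialisation criterion (\cref{spec criterion}), using that a specialisation exists by hypothesis.

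First I would recall the basic mechanism for extracting the degree of $\det_\D(M \otimes \D)$ for a square matrix $M$ over $S_t[z]$. Since $\D_t[z^\pm] := \D *_t \Z$ is an Ore domain (its Ore localisation being a skew-field $\D(z)$, say, because $\Z$ is amenable), one knows that for a square matrix $N$ over $\D_t[z^\pm]$ the degree of $\det_{\D(z)}(N)$, viewed as the span of the support in $z$, is computable ``column by column'' via the inductive construction of $\detc$: at each step one clears a column by subtracting left-multiples of the bottom row, and the top-degree and bottom-degree in $z$ of the resulting $(n-1)\times(n-1)$ block plus the pivot entry add up correctly. Concretely, I would phrase this as: for a square matrix $M$ over $S_t[z]$, the integer $\deg\det_\D(M\otimes\D)$ equals $\max\{ k : $ some derived minor condition holds $\}$ — the cleanest way is to say that $\deg\det_\D(M\otimes\D) \geq k$ if and only if the matrix $z^{-k}$-shifted/truncated object is still invertible over $\D$, or more precisely to use the standard fact that $\deg \det_\D(M\otimes\D)$ is the largest $k$ such that there is a choice of decomposition making the leading $z^k$-coefficient matrix of full rank after Gaussian elimination. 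I expect this bookkeeping to be the main technical obstacle: one must show that each degree inequality $\deg\det_\D(M\otimes\D) \geq k$ translates into the invertibility over $\D$ of a specific square matrix $M_k$ \emph{built from the entries of $M$ by the ring operations of $S_t[z]$} — equivalently a matrix over $S_t[z]$ — so that Cohn's criterion applies. The point is that the leading coefficient of a Dieudonné determinant of a polynomial matrix is itself expressible through a determinant-like expression over $\D$, and ``that expression is nonzero'' is exactly ``a certain matrix over $S_t[z]$ becomes invertible over $\D$''.

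Granting that translation, the argument runs as follows. Let $k = \deg\det_{\D'}(M\otimes\D')$. By the reformulation, there is a square matrix $M_k$ over $S_t[z]$ — obtained from $M$ purely by the arithmetic of the twisted Laurent ring, hence mapping correctly under any $S$-algebra homomorphism — such that $M_k \otimes \D'$ is invertible over $\D'$. I would check here that the compatibility hypotheses $t'\circ\sigma = \sigma\circ t$ and $S$ being $t$-invariant are exactly what is needed for the assignment $M \mapsto M_k$ to commute with the specialisation, i.e.\ so that $M_k\otimes\D'$ is the image of $M_k\otimes S$ under the map induced by $\sigma$, and the analogous statement over $\D$. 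Now apply \cref{spec criterion}: since the specialisation $(S,\sigma)$ from $\D$ to $\D'$ exists, and $M_k\otimes\D'$ is invertible over $\D'$, it follows that $M_k\otimes\D$ is invertible over $\D$. By the reformulation applied over $\D$, this gives $\deg\det_\D(M\otimes\D) \geq k = \deg\det_{\D'}(M\otimes\D')$, which is the claim.

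The one subtlety to handle carefully is that $M$ is a matrix over $S_t[z]$ (polynomials, not Laurent polynomials) and that $S$ is only a subring of $\D$, not all of $\D$; but since everything — the entries of $M$, the auxiliary matrices $M_k$, and the pivoting operations — can be carried out inside $S_t[z]$ as long as the pivots that appear are invertible in $\D$ (the defining property of a specialisation domain is precisely that elements of $S$ not killed by $\sigma$ are invertible in $S$), the elimination and the degree-extraction go through over $S_t[z]$ with denominators lying in $S^\times \subseteq \D^\times$. I would spell this out just enough to see that no element outside $S$ is ever inverted, so that $\sigma$ carries the whole computation over $\D$ to the corresponding computation over $\D'$, matching degrees term by term; the inequality (rather than equality) arises because a pivot that is a unit in $\D$ may be sent to $0$ by $\sigma$, collapsing the degree over $\D'$.
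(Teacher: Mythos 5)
There is a genuine gap, and it sits exactly where you defer the work. Your whole argument rests on the unproved ``reformulation'' that, for each $k$, the inequality $\deg \det_{\D}(M\otimes \D)\geqslant k$ is equivalent to the invertibility (over $\D$, i.e.\ over the Ore field of fractions of $\D_t[z^{\pm}]$) of some auxiliary square matrix $M_k$ obtained from $M$ by ring operations inside $S_t[z]$. No such construction is given, and it is doubtful that one exists in this functorial form: unlike the commutative case, the coefficients of the Dieudonn\'e determinant of a polynomial matrix are not polynomial expressions in the entries --- they are produced by Gaussian elimination and involve inverses of elements of $\D$ --- so ``the leading coefficient is a determinant-like expression over $S_t[z]$'' is not available. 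Relatedly, your claim that the elimination ``never inverts anything outside $S$'' is false in precisely the critical situation: a pivot in $S$ may be nonzero (hence invertible in $\D$) and yet be killed by $\sigma$; then its inverse does not lie in $S$, the computation leaves $S_t[z]$, and it cannot be transported termwise to $\D'$. This is the actual heart of the proposition, and it is what the paper's proof is designed to handle: a triple induction (size of $M$, number of nonzero entries in the first column, total degree of that column) in which one performs Euclid steps by elementary matrices over $S_t[z]$ only when the extreme coefficients survive $\sigma$, and otherwise splits $M$ as a determinantal sum $M'+M''$, isolating the offending term whose coefficient dies under $\sigma$, and tracks which powers of $z$ retain coefficients not annihilated by $\sigma$.

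Even granting the reformulation, the transfer step is misapplied. \cref{spec criterion} concerns matrices over the base ring $R$ for which $\D$ and $\D'$ are epic $R$-fields related by the given specialisation $(S,\sigma)$; your matrices $M_k$ have entries in $S_t[z]$, and ``invertible over $\D$'' for such a matrix means invertible over the Ore field of $\D_t[z^{\pm}]$, which is a different pair of epic fields over a different ring. To invoke Cohn's criterion there you would first have to produce a specialisation between these Ore fields of twisted Laurent series compatible with $z$ --- which is essentially the content of the proposition you are trying to prove (and is what the paper extracts, in degree-controlled form, by the inductive elimination argument). If your auxiliary matrices had entries in $S$ itself the transfer would be fine (the domain $S$ of a specialisation is a local ring with maximal ideal $\ker\sigma$, so a square matrix over $S$ whose image under $\sigma$ is invertible over $\D'$ is already invertible over $S$), but that observation does not reach matrices involving $z$. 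As it stands, both the encoding step and the transfer step are missing, so the proposal does not constitute a proof.
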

\begin{proof}
We are going to prove the desired inequality by a triple induction. Firstly we induct on the size of the matrix $M$; secondly, on the number of non-zero entries in the first column of $M = (m_{ij})$; thirdly on the
sum  $d$ of the degrees of the elements of the first column.

For $1 \times 1$ matrices the result follows trivially, since
\[
 \deg m_{11} \geqslant \deg \sigma(m_{11})
\]
as the support of the Laurent polynomial $\sigma(x)$ is contained in the support of the Laurent polynomial $x$ for any $x \in S_t[z]$.

Now suppose that $M$ is an $n \times n$ matrix with $n>1$.
If the first column of $M$ is trivial, then $\det_\D M\otimes \D = 0$ and $\det_{\D'} M\otimes \D' = 0$, and so the degrees are both equal.

When the first column is not trivial, we need to consider two cases.
Firstly, there might be only one non-zero entry in the first column of $M$. Then both determinants are products of determinants of the same smaller matrix (taken over $\D$ and $\D'$), and an element in $S_t[z]$. In this case we are done by the induction hypothesis.

Secondly, there might be more than one non-trivial entry in the leftmost column of $M$.
Again, we need to consider two situations. Suppose first that the lowest and highest terms appearing in any non-zero $m_{i1}$ are not trivialised by $\sigma$. Then we can perform the first step of Euclid's algorithm using an elementary matrix $E$ whose off-diagonal entry lies in $S_t[z]$ -- it is the product of the lowest term of one entry and the inverse of the lowest term of another entry in the first column. Therefore $E$ and $EM$ are matrices over $S_t[z]$, and we have 
\[\det_{\D} M \otimes \D = \det_{\D} (E \otimes \D \cdot  M \otimes \D)\]
and 
\[ \det_{\D'} M\otimes \D' = \det_{\D'} (E\otimes \D' \cdot M\otimes \D') \]
The sum of the degrees of the elements of the first column of $EM$ is lower than that of $M$, and the number of non-trivial entries was not increased.

The second possibility occurs when one of the entries $m_{i1}$ has a lowest or highest term with coefficient being mapped to $0$ by $\sigma$. Suppose that this term is $x z^k$. Without loss of generality we may assume that it occurs in $m_{11}$. Let $M'$ be obtained from $M$ by subtracting $x z^k$ from $m_{11}$, and $M''$ be obtained from $M$ by forcing the first column to be made of zeroes, except for the first entry which is made equal to $x z^k$. (In this case, Cohn calls $M$ the \emph{determinantal sum} of $M'$ and $M''$, for reasons which will become apparent below.) We have
\[
 \det_\D^c M \otimes \D = \det_\D^c  M'\otimes \D  + \det_\D^c M''\otimes \D 
 \]
and
\[
 \det^c_{\D'} M\otimes\D' = \det^c_{\D'} M'\otimes\D' + \det^c_{\D'} M''\otimes\D' =  \det^c_{\D'} M'\otimes\D'
\]
since $M''\otimes\D'$ has a column of zeroes.
By induction
\[\deg \det_\D M'\otimes\D \geqslant \deg \det_{\D'} M'\otimes\D'\]

The coefficients of $\det^c_\D M''\otimes \D$ are all mapped to $0$ by $\sigma$, since they are all multiples of $x$.
It is now clear that the set of powers of $z$ with a coefficient not being mapped to $0$ by $\sigma$ is the same in $\det_\D^c M$ and in $\det^c_{\D'} M'\otimes \D$; but this is precisely the set of powers which are still visible in $\det^c_{\D'} M\otimes \D'$. This proves the claim.
\end{proof}

We will use the above proposition in two ways: firstly, it will allow us to show that any descending HNN extension $G$ of a free group is $H^1(G;\Z)$-specialising; secondly, we will use it directly to prove the inequality between Thurston and higher Alexander norms for $G$.

\begin{rmk}
 Let $\Phi$ be a family of morphisms $G \to \R$, and let $H$ be a subgroup of $G$. Then $\Phi$ forms naturally a family of morphisms $H \to \R$. Moreover, if a morphism takes $G$ to $\Z$ then it also takes $H$ to $\Z$.
\end{rmk}

\begin{cor}
\label{extensions are spec}
Let $G$ be a group.
Let $\phi \colon G \to \Z$ be an epimorphism with kernel $K$, and let $\Phi$ be a collection of homomorphisms $G\to \R$. If $K$ is $\Phi$-specialising then $G$ is $(\Phi \cup \{\phi\})$-specialising.
\end{cor}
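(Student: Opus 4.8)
The plan is to verify the matrix criterion of \cref{spec criterion}: given an epimorphism $\alpha\colon G\to\Gamma$ onto a torsion-free elementary amenable group through which $\phi$ and every member of $\Phi$ factor, we must produce a specialisation from the $\Q G$-field $\D(G)$ to the $\Q G$-field $\D(\Gamma)$. The conditions ``$G$ torsion-free'' and ``$G$ satisfies the Atiyah Conjecture'' required by the definition of $\Phi$-specialising are immediate: $G/K\cong\Z$ is torsion-free and $K$ is (being $\Phi$-specialising), so any torsion of $G$ lies in $K$ and is trivial; and $G$ is an extension of $K$ by an elementary amenable group, so it inherits the Atiyah Conjecture from $K$. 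First I would descend to the kernels. Writing $\phi=\bar\phi\circ\alpha$, the homomorphism $\bar\phi\colon\Gamma\to\Z$ is onto, hence $\Lambda:=\ker\bar\phi$ is torsion-free and elementary amenable, $\alpha$ restricts to an epimorphism $\alpha|_K\colon K\to\Lambda$ (since $K=\alpha^{-1}(\Lambda)$), and every $\phi'\in\Phi$ restricts on $K$ to a homomorphism that factors through $\alpha|_K$. As $K$ is $\Phi$-specialising, this yields a specialisation from $\D(K)$ to $\D(\Lambda)$.

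Next I would set up the twisted Laurent polynomial picture. Choosing $g\in G$ with $\phi(g)=1$ splits $1\to K\to G\to\Z\to 1$, so by \cref{crossed product of extension} and the isomorphism \eqref{ore localisation iso} one has $\Q G=\Q K_t[z^{\pm}]$ and $\D(G)=T^{-1}\bigl(\D(K)_t[z^{\pm}]\bigr)$, with $t$ conjugation by $g$ (extended to $\D(K)$ using the Atiyah Conjecture) and $z=g$; likewise, with $\bar g=\alpha(g)$ (so $\bar\phi(\bar g)=1$), one has $\D(\Gamma)=T^{-1}\bigl(\D(\Lambda)_{t'}[z^{\pm}]\bigr)$, and $\alpha$ intertwines $t$ with $t'$ on $\Q K$. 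The step I expect to be the main obstacle is upgrading the specialisation from $\D(K)$ to $\D(\Lambda)$ to one whose domain is $t$-invariant and which intertwines $t$ with $t'$ — exactly the data needed to feed into \cref{spec then good}. Here I would use Cohn's theory: the specialisation is essentially unique as an equivalence class of subhomomorphisms (\cref{spec criterion}), and I would take its maximal subhomomorphism representative $(\mathcal O,\pi)$ (in the sense of \cite{Cohn2006}). Since $t,t'$ preserve the images of $\Q K$ and $\Q\Lambda$ and $\alpha$ intertwines them, whenever $(S,\sigma)$ is a subhomomorphism from $\D(K)$ to $\D(\Lambda)$ so is $\bigl(t(S),\,t'\circ\sigma\circ t^{-1}\bigr)$; applying this to $(\mathcal O,\pi)$ and to $t^{-1}(\mathcal O)$ and invoking maximality forces $t(\mathcal O)=\mathcal O$ and $\pi\circ t=t'\circ\pi$ on $\mathcal O$. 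In particular $\Q K_t[z]\subseteq\mathcal O_t[z]$.

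Finally I would conclude. Let $M$ be a square matrix over $\Q G$; multiplying by a suitable power of the unit $z$ (which changes neither invertibility over $\D(G)$ nor over $\D(\Gamma)$) we may assume $M$ has entries in $\Q K_t[z]\subseteq\mathcal O_t[z]$. Applying \cref{spec then good} with $R=\Q K$, $\D=\D(K)$, $\D'=\D(\Lambda)$, the automorphisms $t,t'$ and the specialisation $(\mathcal O,\pi)$, and identifying the fraction fields of $\D(K)_t[z^{\pm}]$ and $\D(\Lambda)_{t'}[z^{\pm}]$ with $\D(G)$ and $\D(\Gamma)$ via \eqref{ore localisation iso}, gives
\[
  \deg\det_{\D(G)}\bigl(M\otimes\D(G)\bigr)\;\geq\;\deg\det_{\D(\Gamma)}\bigl(M\otimes\D(\Gamma)\bigr),
\]
where $\deg$ denotes the $z$-degree. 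If $M\otimes\D(\Gamma)$ is invertible over $\D(\Gamma)$ then its Dieudonn\'e determinant is non-zero, so the right-hand side is a finite integer; hence the left-hand side is finite, i.e.\ $\det_{\D(G)}\bigl(M\otimes\D(G)\bigr)\neq 0$ and $M\otimes\D(G)$ is invertible. By \cref{spec criterion} there is then a specialisation from $\D(G)$ to $\D(\Gamma)$, which proves that $G$ is $(\Phi\cup\{\phi\})$-specialising.
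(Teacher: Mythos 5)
Your route is the paper's route: check torsion-freeness and the Atiyah Conjecture for $G$, pass to kernels (your $\Lambda=\ker\bar\phi$ equals $\alpha(K)$, which is what the paper uses) to obtain a specialisation from $\D(K)$ to $\D(\Lambda)$ from the hypothesis on $K$, write $\Q G$ and $\Q\Gamma$ as twisted Laurent rings in a variable $z$ with $\phi(z)=1$, and then feed an equivariant specialisation into \cref{spec then good} and conclude via the matrix criterion of \cref{spec criterion}. The final degree argument is exactly the paper's. The one place you diverge is also the one step that fails as written: the appeal to ``the maximal subhomomorphism representative $(\mathcal O,\pi)$'' of the specialisation. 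Cohn's uniqueness statement says only that any two subhomomorphisms between the same epic $R$-fields are equivalent (they agree on some smaller subhomomorphism); it does not say they agree on the intersection of their domains, and there is in general no maximum subhomomorphism. Already in the commutative case $R=k[x,y]$, $K=k(x,y)$, $L=k$, the maximal subhomomorphism domains are the valuation rings of the various places of $K$ centred at $(x,y)$ with residue field $k$, and these are pairwise incomparable. Moreover, even if Zorn gives you a maximal element, your deduction $\pi\circ t=t'\circ\pi$ needs the stronger property that every subhomomorphism's \emph{map} is a restriction of $\pi$; maximality of the domain alone does not give equivariance of the map.

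The gap is local and repairable without changing your structure. The paper arranges invariance by replacing the domain $S$ of the given specialisation by $\bigcap_{k\in\Z}z^k(S)$ and restricting $\sigma$; alternatively, and perhaps most transparently, one can use the specialisation constructed in Cohn's proof of \cref{spec criterion}, whose domain consists of the entries of inverses (over $\D(K)$) of square matrices over $\Q K$ that become invertible over $\D(\Lambda)$, the map sending such an entry to the corresponding entry of the inverse over $\D(\Lambda)$. Since $t$ and $t'$ are automorphisms extending the same automorphism of $\Q K$ (compatibly via $\alpha$), the $z$-action permutes the defining matrices, so this domain is $t$-invariant and the map is equivariant by construction. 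With either of these substitutions in place of the maximality argument, your proof coincides with the paper's.
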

\begin{proof}
We start by remarking that $G$ is torsion free and satisfies the Atiyah conjecture, since $K$ does and $\Z$ is torsion free and elementary amenable (see \cite[Theorem 10.19]{Lueck2002}).

Choose an element $z \in \phi^{-1}(1)$ in $G$.
 Let $R = \Q K$. Recall from \cref{crossed product of extension}  that $\Q G$ has the structure of a twisted Laurent polynomial ring over $R$ with variable $z$.

 Let $\alpha \colon G \to \Gamma$ be an epimorphism to a torsion-free elementary amenable group $\Gamma$ such that every  morphism $\psi \in \Phi \cup \{\phi\}$ factors through $\alpha$. Let $L = \alpha(K)$. Since $\phi$ factors through $\alpha$, the ring $\Q \Gamma$ has the structure of a twisted Laurent polynomial ring over $\Q L$ with variable $\alpha(z)$; we will abuse the notation and call this variable $z$ as well. This way $\alpha\vert_K$ is a $z$-equivariant map.

 Note that $L$ is torsion-free and elementary amenable, and that every element in $\Phi$ restricted to $K$  factors through $\alpha\vert_K$. Thus, by assumption on $K$, there exists a specialisation $(S,\sigma)$ from the epic $R$-field $\D(K)$ to the epic $R$-field $\D(L)$, where the map $R = \Q K \to \Q L \to \D(L)$ is induced by $\alpha\vert_K$. Note that the maps $\Q K \to D(K)$ and $\Q K \to D(L)$ are $z$-equivariant -- see \cite[Lemma 10.57]{Lueck2002}. We may also require that $S$ is preserved by the $z$-action -- e.g. we may replace $S$ by $\bigcap_{k\in \Z} z^k(S)$; it is immediate that $\sigma$ will be $z$-equivariant as well.

 We now claim that there exists a specialisation from $\D(G)$ to $\D(\Gamma)$. In view of \cref{spec criterion}, let $M$ be a square matrix over $\Q G$ such that $M\otimes \D(\Gamma)$ is invertible. We can view $M$ as a matrix over the Laurent polynomial ring $R_t[z]$, and $\alpha(M) = M\otimes \D(\Gamma)$ as a matrix over the polynomial ring $(\Z L)_t[z]$. Since $M\otimes \D(\Gamma)$ is invertible, we have
 \[
  \det_{\D(\Gamma)} M\otimes \D(\Gamma) \neq 0
 \]
and hence
\[
 \deg \det_{\D(\Gamma)} M\otimes \D(\Gamma) \geqslant 0
\]
Now we apply \cref{spec then good} and conclude that
\[
 \deg \Ddet M \geqslant 0
\]
which implies that $\Ddet M \neq 0$, and so $M\otimes \D(G)$ is invertible. This proves the claim.
\end{proof}

Recall that $b_1(G)$ denotes the (usual) first Betti number of $G$.

\begin{thm}
\label{main thm alex vs thurston}
 Let $G = F_n \ast_g$ be a descending HNN extension of $F_n$ with stable letter $t$, and let $\psi \in H^1(G;\R)$. Then
 \[
  \delta_1(\psi) \leqslant \delta_2(\psi) \leqslant \dots \leqslant \| \psi \|_T
 \]
If $b_1(G) \geqslant 2$, then also $\delta_0(\psi) \leqslant \delta_1(\psi)$.
If $b_1(G) = 1$, then $\delta_0(\psi) -| \psi(t) | \leqslant \delta_1(\psi)$.

When $\psi$ is fibred (that is $\ker \psi$ is finitely generated), then all the inequalities above become equalities.
\end{thm}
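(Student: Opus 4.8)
The plan is to rephrase the assertion as a chain of inequalities between the degrees of Dieudonn\'e determinants of a single matrix pushed down a tower of groups, and then to control those degrees by means of $\Phi$-specialising groups. Since $\|\cdot\|_T$ is a seminorm (\cref{indeed seminorm}) and the $\delta_k$ are $\R$-homogeneous and continuous, it is enough to treat a primitive class $\psi\in H^1(G;\Z)\s-\{0\}$. Put $\S=\{s_1,\dots,s_n,t\}$ and $A=A(g;\S,t)=\mathrm{Id}-t\,F(g)$; since $p_k(t)\neq 0$ for every $k$, \cref{invariants for fbc} and \cref{higher Alex polytopes} apply, and using \cite[Lemmata~6.12 and 6.16]{FriedlLueck2015} to identify $\norm$ of a Newton polytope with the degree of the corresponding Dieudonn\'e determinant, regarded as a twisted Laurent polynomial in a fixed preimage $z\in G$ of a generator of $\psi(G)$, one finds $\|\psi\|_T=\deg_z\det_{\D(G)}(A)-|\psi(t)|$ and, for $k\geq 1$ (and $k=0$ if $b_1(G)\geq 2$), $\delta_k(\psi)=\deg_z\det_{\D(\Gamma_k)}(p_kA)-|\psi(t)|$, while if $b_1(G)=1$ the extra factor $p_0(t)-1$ in \cref{invariants for fbc}(\ref{fbc:alex}) gives $\delta_0(\psi)=\deg_z\det_{\D(\Gamma_0)}(p_0A)$. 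So the whole theorem reduces to the single chain
\[ \deg_z\det_{\D(\Gamma_0)}(p_0A)\ \leq\ \deg_z\det_{\D(\Gamma_1)}(p_1A)\ \leq\ \dots\ \leq\ \deg_z\det_{\D(G)}(A), \]
the $b_1(G)=1$ form of the statement arising automatically because its $k=0$ entry is then $\delta_0(\psi)$ rather than $\delta_0(\psi)+|\psi(t)|$.

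To build the requisite specialisations I first observe that $G$ itself is $H^1(G;\Z)$-specialising: the kernel of the induced epimorphism $\phi\colon G\to\Z$ is locally free (every finitely generated subgroup of it lies in a copy of $F_n$, as in the proof of \cref{G is locally indicable}), hence specialising by \cref{loc free spec}, so by \cref{extensions are spec} (with $\Phi=\emptyset$) the group $G$ is $\{\phi\}$-specialising, a fortiori $H^1(G;\Z)$-specialising. I then claim each $\Gamma_k=G/G_r^{k+1}$ is $H^1(\Gamma_k;\Z)$-specialising. Its rational derived series is $(\Gamma_k)_r^i=G_r^i/G_r^{k+1}$, which terminates at the free-abelian group $G_r^k/G_r^{k+1}$, and the claim follows by induction on the rational derived length of such a group: the base case is that a free-abelian group is $\emptyset$-specialising, which is immediate from \cref{spec criterion} since the fraction field of a commutative domain admits a specialisation to the fraction field of any quotient of it; the inductive step assembles the group from the $\Z$-quotients of $\Gamma/\Gamma_r^1$ (free-abelian of finite rank when $\Gamma$ is finitely generated) by repeated application of \cref{extensions are spec} together with the inductive hypothesis for $\Gamma_r^1$, the possibly infinite-rank free-abelian layers of the series being absorbed by passing to directed unions (the specialising property is preserved under directed unions, by the argument behind $\D(\bigcup F_i)=\bigcup\D(F_i)$ in the proof of \cref{loc free spec}).

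Now, because $G$ is $H^1(G;\Z)$-specialising and $\psi$ factors through $p_k\colon G\to\Gamma_k$ (as $G_r^{k+1}\leq G_r^1$) there is a specialisation $\D(G)\to\D(\Gamma_k)$; because $\Gamma_k$ is $H^1(\Gamma_k;\Z)$-specialising and $\ker(\Gamma_k\to\Gamma_{k-1})=(\Gamma_k)_r^k\subseteq(\Gamma_k)_r^1$ there is a specialisation $\D(\Gamma_k)\to\D(\Gamma_{k-1})$. In each case, replacing the domain $S$ by $\bigcap_{m}z^m(S)$ as in the proof of \cref{extensions are spec} makes it $z$-invariant and the specialisation $z$-equivariant, so \cref{spec then good} applies to $A$, respectively to $p_kA$, viewed over the appropriate twisted Laurent ring over $\Z$ of the relevant kernel. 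Feeding \cref{spec then good} the tower $\D(G)\to\D(\Gamma_k)\to\dots\to\D(\Gamma_0)$ one step at a time yields exactly the chain of inequalities above, and hence the theorem for integral classes; general real classes then follow by continuity and homogeneity.

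For the fibred case, if $\ker\psi$ is finitely generated then it is free --- $G$ is the fundamental group of its $2$-dimensional aspherical mapping torus, so $\mathrm{cd}\,G\leq 2$, whence $\mathrm{cd}\,\ker\psi\leq 1$ and Stallings--Swan gives $\ker\psi\cong F_m$ --- and $G=F_m\rtimes_h\Z$ with $h\in\Aut(F_m)$ and $\psi$ the induced epimorphism. In the generating set adapted to this splitting the relevant matrix is $\mathrm{Id}-t\,F(h)$, a twisted polynomial in $z=t$ whose constant coefficient is $\det(\mathrm{Id})=1$ and whose top coefficient is a unit, because $F(h)$ is invertible over $\Z F_m$ for $h$ an automorphism (Fox's chain rule applied to $h\circ h^{-1}=\mathrm{id}$, together with stable finiteness of $\Z F_m$); the same applies to the induced automorphism of $F_m/G_r^{k+1}$, so every term of the chain has $z$-degree exactly $m$ and all the inequalities of the theorem become equalities. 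The main obstacle is the second half of the second paragraph: proving that the solvable rational-derived quotients $\Gamma_k$ are specialising, the delicate point being to control the possibly infinitely generated free-abelian layers of the rational derived series by passing to directed unions; the polytope-versus-degree translation and the bookkeeping in the case $b_1(G)=1$ are routine but require care.
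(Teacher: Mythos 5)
Your overall strategy (translate everything into degrees of Dieudonn\'e determinants of $A(g;\S,t)$ in the $z$-direction of $\psi$ and compare them via \cref{spec then good}) is the same as the paper's, but there are two genuine gaps. The decisive one is that \cref{spec then good} needs a $z$-equivariant specialisation between the \emph{coefficient} skew-fields of the twisted Laurent decompositions, i.e.\ from $\D(K)$ to $\D(L)$ with $K=\ker\psi\leqslant G$ and $L=\ker(\psi\colon\Gamma_k\to\Z)$; what you construct are specialisations $\D(G)\to\D(\Gamma_k)$ and $\D(\Gamma_k)\to\D(\Gamma_{k-1})$ at the level of the ambient groups. A specialisation between the ambient skew-fields does not automatically restrict to one between the coefficient subfields: you would have to show that its domain $S$ meets $\D(K)$ in a large enough subring and that $\sigma(S\cap\D(K))\subseteq\D(L)$, and nothing in your proposal addresses this. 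The way to close this gap is to work at the kernel level from the start: $\ker\psi$ is locally free when $\psi=\pm\phi$ ($\phi$ the induced epimorphism), and otherwise $\phi|_{\ker\psi}$ exhibits $\ker\psi$ as (locally free)-by-cyclic, so \cref{loc free spec} and \cref{extensions are spec} make $\ker\psi$ itself $\{\phi|_{\ker\psi}\}$-specialising, and since $\phi|_{\ker\psi}$ factors through $\ker\psi\to L$ one gets the specialisation $\D(K)\to\D(L)$ directly. This use of the structure of $\ker\psi$ is the heart of the argument and is absent from your write-up.

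The second gap is the part you yourself flag: your plan makes the intermediate inequalities $\delta_k\leqslant\delta_{k+1}$ (and the $\delta_0$ statements) depend on showing that the solvable quotients $\Gamma_k$ are suitably specialising, via an induction over the rational derived series with possibly infinitely generated free-abelian layers and an unproved claim that the specialising property passes to directed unions and iterated abelian extensions. This machinery is both incomplete as written and unnecessary: since $G$ is finitely presented of deficiency one, these inequalities are already available from Harvey \cite[Corollary 2.3]{Harvey06}, so only $\delta_i(\psi)\leqslant\|\psi\|_T$ for $i>0$ needs the specialisation argument. Finally, in the fibred case the step ``$\mathrm{cd}\,G\leqslant 2$, whence $\mathrm{cd}\,\ker\psi\leqslant 1$'' is not valid as stated (passing to a subgroup only preserves the bound $2$); freeness of a finitely generated $\ker\psi$ requires Bieri's theorem on finitely generated normal subgroups of groups of cohomological dimension two, or the result of Geoghegan--Mihalik--Sapir--Wise \cite{Geogheganetal2001} that the paper cites. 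Granting freeness, your leading/constant coefficient computation with the invertible Fox Jacobian is a reasonable alternative to the paper's route via $-\chi^{(2)}(\ker\psi)=b_1(\ker\psi)-1$ and McMullen's lower bound, but it only yields equalities once the main chain has been established, so it does not repair the gaps above.
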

\begin{proof}
We start by noting that it is enough to verify the statements for integral classes; once this is done, the statements for rational classes follow immediately, and for general classes follow from continuity of the norms.

Since $G$ is a finitely presented group of deficiency at least $1$, Harvey showed in \cite[Corollary 2.3]{Harvey06} that we have $\delta_i(\psi) \leqslant \delta_{i+1}(\psi)$ for every $i>0$. She also proved the inequalities involving $\delta_0$. Thus we need only show that $\delta_i(\psi) \leqslant \| \psi \|_T$ for $i>0$. To this end, pick such an $i$.

Let $\phi \colon G \to \Z$ denote the canonical epimorphism induced by the HNN extension.  Let $\psi \colon G \to \Z$ be a homomorphism.
Let $p_i \colon G \to \Gamma_i$ denote the map associated to $\delta_i$. Note that $\Gamma_i$ maps onto $\Gamma_0$, which is the free part of the abelianisation of $G$. Let $K = \ker \psi$. If $\psi = \pm \phi$, then $K$ is locally free (since $G$ is a descending HNN extension). By \cref{loc free spec}, $K$ is specialising.

If $\psi \neq \pm \phi$, then $\phi|_K$ is non-trivial. It is immediate that $\phi|_K \colon K \to \Z$ gives $K$ the structure of a (locally-free)-by-cyclic group. By \cref{loc free spec,extensions are spec}, $K$ is $\{\phi\vert_K\}$-specialising.

Recall that $\delta_i = \norm (P_{L^2}(G,p_i))$ and $\| \cdot \|_T = \norm (P_{L^2}(G))$. \cref{invariants for fbc}(\ref{fbc:higher alex}) tells us that
\[
 P_{L^2}(G,p_i) = P(\det_{\D(\Gamma_i)} (p_i(A(g;\S,s)))) - P(p_i(s) - 1)
\]
and
\[
 P_{L^2}(G) = P(\Ddet (A(g;\S,s))) - P(s - 1)
 \]
where $\S$ is a generating set of $G$, and $s \in \S$ is such that $p_i(s) \neq 0$.
Recall that $\Q G$ is naturally a twisted Laurent polynomial ring $\Q K_t[z^\pm]$. Let $L = \ker (\psi \colon \Gamma_i \to \Z)$, and consider the subrings $\D(K)_t[z^\pm]\subseteq \D(G)$ and $\D(L)_t[z^\pm]\subseteq \D(\Gamma_i)$. Since $K$ is $\{\phi|_K\}$-specialising and $\phi|_K$ factorises over $p_i|_K\colon K\to L$, there exists a specialisation from $\D(K)$ to $\D(L)$.
We now apply \cref{spec then good} with $R = \Q K,  \D = \D(K)$, and $\D' = \D(L)$ and obtain 

\begin{align*}
 \delta_i(\psi) &= \deg \det_{\D(\Gamma_i)} (p_i(A(g;\S,s))) - \deg (p_i(s) - 1) \\
  &= \deg \det_{\D(\Gamma_i)} (p_i(A(g;\S,s))) - | \psi(s) | \\
  &= \deg \det_{\D(\Gamma_i)} (p_i(A(g;\S,s))) - \deg (s - 1) \\
  &\leqslant \deg \Ddet (A(g;\S,s)) - \deg (s - 1) \\
  &= \| \psi \|_T
\end{align*}
where the degrees are taken of Laurent polynomials in $z$. Note that to use \cref{spec then good} we need to guarantee that the embedding $\Q K \into \D(K)$ and the map $\Q K \to \Q L \into \D(L)$, as well as the specialisation, are $z$-equivariant, but this follows from \cite[Lemma 10.57]{Lueck2002} and a discussion as before.

\smallskip
Now suppose that $\psi$ is fibred, that is that $K = \ker \psi$ is finitely generated. It follows from the work of Geoghegan--Mihalik--Sapir--Wise~\cite[Theorem 2.6 and Remark 2.7]{Geogheganetal2001} that $K$ is finitely generated free itself, say of rank $m$. Denote the inclusion by $i\colon K\to G$.

        By claim (3.26) made in the proof of \cite[Theorem 3.24]{FriedlLueck2015b}, we have
                \[ \|\psi\|_T = \norm(P_{L^2}(G))(\psi) = \norm(\P(-\tor_u(G)))(\psi) = -\chi^{(2)}(i^*\widetilde{T};\mathcal{N}(K))
                \]
            where $T$ is the mapping telescope of a realisation of $g$. Recall that the $K$-CW-complex $i^*\widetilde{T}$ is a model for $EK$ and that $K$ is finitely generated free, so
            \[ -\chi^{(2)}(i^*\widetilde{T};\mathcal{N}(K)) = -\chi^{(2)}(K) = b_1^{(2)}(K) = b_1(K) - 1\]
        
        McMullen showed in \cite[Theorem 4.1 and Theorem 5.1]{McMullen2002} that we have
        \[ b_1(\ker\psi)-1 \leqslant \| \psi \|_A \]
        irrespective of the fact that $\psi$ is fibered. (In fact, McMullen showed that this is an equality when $\psi$ lies in the cone over an open face of the unit ball of the Alexander norm.)

        Combining the above results  with \cref{main thm alex vs thurston} we obtain
        \[ b_1(\ker\psi)-1 \leqslant \| \psi \|_A \leqslant \|\psi\|_T = b_1(\ker\psi)-1 \qedhere\]
\end{proof}

\begin{rmk}
Dunfield in~\cite{Dunfield2001} constructed a hyperbolic $3$-manifold which fibres, and whose Thurston and Alexander norms do not agree. His example is actually a link complement, and thus a manifold with toroidal boundary. Since it does fibre, it must do so over a surface with a non-empty boundary. Thus the fundamental group of the $3$-manifold is a free-by-cyclic group, and hence noting that our definition of the Thurston norm coincides with the usual one for a $3$-manifold (as shown in \cite[Theorem 3.27]{FriedlLueck2015b}), we conclude that Dunfield's example shows that also in our setting the Alexander and Thurston norms are not equal in general.
\end{rmk}

\section{The \texorpdfstring{$L^2$}{L\texttwosuperior}-torsion polytope and the BNS-invariant}

	In this section we relate the $L^2$-torsion polytope of a descending HNN extension of $F_2$ with the BNS-invariant introduced in \cref{sec:bns}. This approach is motivated by the following results: If $M$ is a compact orientable $3$-manifold, the unit norm ball of the Thurston norm is a polytope, and there are certain maximal faces such that a cohomology class comes from a fibration over the circle if and only if it lies in the positive cone over these faces \cite{Thurston1986}. Bieri-Neumann-Strebel \cite[Theorem E]{Bierietal1987} showed that the BNS-invariant $\Sigma(\pi_1(M))$ is precisely the projection of these \emph{fibered faces} to the sphere $S(G) = (\Hom(G,\R)\s- \{0\})/\R_{>0}$. Since the $L^2$-torsion polytope induces the Thurston norm for descending HNN extensions of $F_n$, we expect a similar picture in this setting. The work of Friedl-Tillmann \cite[Theorem 1.1]{FriedlTillmann2015} provides further evidence for this expectation.

\begin{dfn}
Let $H$ be an abelian group with a total ordering $\leqslant$, which is invariant under multiplication. Let $R$ be a skew-field. We define
$R ( H, \leqslant)$ to be the set of functions $H \to R$ with well-ordered support, that is $f \colon H \to R$ belongs to $R ( H, \leqslant)$ if every subset of $H$ whose image under $f$ misses zero has a $\leqslant$-minimal element.
\end{dfn}

\begin{thm}[Malcev, Neumann~\cite{Malcev1948,Neumann1949}]
Convolution is well-defined on \[R ( H, \leqslant)\] and turns it into a skew-field.
\end{thm}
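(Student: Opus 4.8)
The plan is to treat the two assertions separately: first that convolution is a well-defined associative product making $R(H,\leqslant)$ a ring with unit, and then that every non-zero element of this ring is invertible.

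\textbf{Convolution is well-defined.} Everything here rests on the classical support lemma: if $A,B\subseteq H$ are well-ordered, then $AB=\{ab\mid a\in A,\ b\in B\}$ is well-ordered and, for each $h\in H$, the set $\{(a,b)\in A\times B\mid ab=h\}$ is finite. I would prove this from the observation that a subset of a totally ordered set is well-ordered \iff it has no infinite strictly descending sequence, equivalently \iff every sequence in it has a weakly increasing subsequence (the usual ``peak'' argument). For finiteness: an infinite family $(a_n,b_n)$ with $a_nb_n=h$ has the $a_n$ pairwise distinct, so one may extract a strictly increasing subsequence from $A$, and multiplication-invariance of $\leqslant$ turns it into a strictly decreasing sequence in $B$, a contradiction. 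Well-orderedness of $AB$ is similar: a hypothetical strictly descending sequence in $AB$ reduces, via the pigeonhole principle applied to the $A$-coordinates, to the case of distinct $A$-coordinates, which again produces a strictly descending sequence in $B$. Granting this, $(f\ast g)(h):=\sum_{ab=h}f(a)g(b)$ is a finite sum in $R$, and $\supp(f\ast g)\subseteq\supp(f)\,\supp(g)$ is well-ordered, so $f\ast g\in R(H,\leqslant)$. Associativity and distributivity are then formal, using associativity in $H$ together with the iterated form of the support lemma (to see that the triple sums that occur are finite); the unit is the function supported at $e\in H$ with value $1_R$.

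\textbf{Reduction of invertibility to a geometric series.} Let $0\neq f\in R(H,\leqslant)$ and put $h_0=\min\supp f$, which exists since $\supp f$ is non-empty and well-ordered. The function $v$ supported at $h_0$ with value $f(h_0)\in R^\times$ is a unit, with inverse the function supported at $h_0^{-1}$ with value $f(h_0)^{-1}$ (here we use that $R$ is a skew-field). Replacing $f$ by $v^{-1}\ast f$, we may assume $\min\supp f=e$ and $f(e)=1$, so $f=1-u$ with $\supp u$ well-ordered and contained in the positive cone $\{h\in H\mid h>e\}$. I would then show that $w:=\sum_{n\geqslant 0}u^{\ast n}$ defines an element of $R(H,\leqslant)$ and that $(1-u)\ast w=w-u\ast w=1$, giving $v^{-1}\ast f$, hence $f$, a right inverse; since in any non-zero ring in which every non-zero element has a right inverse every non-zero element is a two-sided unit, $R(H,\leqslant)$ is a skew-field.

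\textbf{The crux.} Showing that $w$ makes sense is precisely \emph{Neumann's Lemma}: if $S$ is a well-ordered subset of the positive cone of an ordered abelian group, then the submonoid $S^\ast=\bigcup_{n\geqslant 0}S^n$ is well-ordered and each of its elements lies in only finitely many $S^n$. Applying this to $S=\supp u$ and combining it with the iterated support lemma (which makes each coefficient $u^{\ast n}(h)$ a finite sum and bounds the number of length-$n$ factorisations of $h$), one gets that $w(h)=\sum_n u^{\ast n}(h)$ is a finite sum in $R$ and $\supp w\subseteq S^\ast$ is well-ordered; the telescoping identity $(1-u)\ast w=1$ then follows by rearranging finite sums coefficient by coefficient. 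I expect Neumann's Lemma to be the main obstacle: the two-set support lemma only controls products of boundedly many factors, whereas here one must bound, for a fixed $h\in S^\ast$, the lengths of \emph{all} factorisations $h=s_1\cdots s_n$ with $s_i\in S$ simultaneously. This requires a genuine combinatorial argument (see Neumann~\cite{Neumann1949} and the standard expositions of the Malcev--Neumann construction), of extraction/Ramsey type, in which the well-ordering of $S$ — not merely the existence of a least element — and the multiplication-invariance of $\leqslant$ are used in an essential, non-formal way. Everything else in the proof is bookkeeping.
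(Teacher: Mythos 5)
Your outline is essentially correct, and it is the standard Malcev--Neumann argument; note that the paper itself offers no proof of this statement at all -- it is quoted as a classical theorem with a citation to Malcev and Neumann -- so there is no internal proof to compare against, and your proposal simply supplies the expected textbook route. The two-set support lemma, the reduction of invertibility to inverting $1-u$ with $\supp u$ in the strictly positive cone (after multiplying by the inverse of the leading term, which uses that $R$ is a skew-field), and the geometric series $\sum_{n\geqslant 0}u^{\ast n}$ are exactly the classical steps, and your bookkeeping around them (finiteness of each coefficient, the telescoping identity, and the observation that a ring in which every non-zero element has a right inverse is a skew-field) is sound; your descending-sequence arguments for the support lemma are fine for the abelian, bi-invariantly ordered $H$ of the statement. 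The one substantive caveat is the one you yourself flag: Neumann's Lemma (well-orderedness of $\bigcup_n S^n$ for $S$ well-ordered and strictly positive, together with the bound on lengths of factorisations of a fixed element) is the actual content of the theorem, and you invoke it by citation rather than proving it. As it stands your text is a correct reduction of the theorem to Neumann's Lemma rather than a self-contained proof; since the paper treats the whole theorem as a black box, that level of detail is consistent with the source, but if you want a complete argument you must supply the extraction/Ramsey-type proof of that lemma (or the well-founded-induction proof on $\min$ of supports), as in Neumann's original paper or standard expositions of the Malcev--Neumann construction.
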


\begin{rmk}\label{Malcev Neumann cross}
 In fact, given structure maps $\phi \colon H \to \Aut(R)$ and $\mu \colon H\times H\to R^\times$ of a crossed product $R*H$, one can also define a crossed-product convolution on $R ( H, \leqslant)$ in a way completely analogous to the usual construction of crossed product rings (see \cref{def crossed product}). The resulting ring is still a skew-field, and we will denote it by $R \ast ( H, \leqslant)$ for emphasis.
\end{rmk}

\begin{rmk}
 In fact the Malcev--Neumann construction works for all biorderable groups, and not merely abelian ones.
\end{rmk}

In order to relate the $L^2$-torsion polytope to the BNS-invariant, we first need to put the skew-field $\D(G)$ and the Novikov-Sikorav completion $\widehat{\Z G}_\phi$ (introduced in \cref{def:sikorav-novikov}) under the same roof.

\begin{lem}
        Let $K = \ker(p_0\colon G\to \Gamma_0 = H_1(G)_f)$. Given $\phi\in\Hom(G,\R) \s- \{0\}$ with $L = \ker(\phi)$, let $\leq_\phi$ be a multiplication invariant total order on $H_1(G)_f$ such that $\phi$ is order-preserving (we endow $\R$ with the standard ordering $\leqslant$). We define
        \[ \mathfrak{F}(G,\phi) := \D(K) * (H_1(G)_f,\leq_\phi)\]
        in the sense of \cref{Malcev Neumann cross}. Then there is a commutative diagram of rings
        \[
        \xymatrix{
                \Z K * H_1(G)_f\ar[r] &   \D(K) * H_1(G)_f\ar[r] & \D( G) \ar[d]^{i_\phi}\\
                \Z G\ar[u]^\cong\ar[d]_\cong\ar@/_/[rru] \ar@/^/[rrd] &  &   \mathfrak{F}(G,\phi)\\
                \Z L *\im \phi \ar[r] & \widehat{\Z L *  \im \phi}_{\iota} \ar[r]^\cong & \hat{\Z G}_\phi \ar[u]_{j_\phi}
                }
        \]
        such that all maps are inclusions, where $\iota$ denotes the inclusion $\im \phi \into \R$, and
        $\widehat{\Z L *  \im \phi}_{\iota}$
        denotes the Sikorav--Novikov completion of $\Z L *  \im \phi$ with respect to $\iota \colon \im \phi \to \R$.
\end{lem}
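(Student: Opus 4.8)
The plan is to realise every ring in the diagram as a subring of the Malcev--Neumann skew-field $\mathfrak{F}(G,\phi)=\D(K)\ast(H,\leq_\phi)$ --- where I write $H:=H_1(G)_f$ --- so that all arrows become honest inclusions and commutativity comes essentially for free. Let $\bar\phi\colon H\to\R$ be the homomorphism induced by $\phi$ (it exists since $\R$ is torsion-free), and note $K\subseteq L$ because $\phi$ vanishes on $K$. First I would dispatch the left column and the top row: as $H$ and $\im\phi$ are finitely generated free abelian, the surjections $G\to H$ and $G\to\im\phi$ split, so \cref{crossed product of extension} gives ring isomorphisms $\Z G\cong\Z K\ast H$ and $\Z G\cong\Z L\ast\im\phi$ (with trivial cocycles), which are the two isomorphisms on the left. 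Since $K$, being a subgroup of $G$, also satisfies the Atiyah Conjecture, $\D(K)$ is a skew-field and the conjugation action of $H$ on $\Z K$ extends to $\D(K)$ (see \cite[Lemmas~10.57 and 10.58]{Lueck2002}); applying $\Z K\into\D(K)$ to coefficients thus gives a monomorphism $\Z K\ast H\into\D(K)\ast H$, and composing with $\D(K)\ast H\into T^{-1}(\D(K)\ast H)$ (injective, as $\D(K)\ast H$ is a domain) together with \eqref{ore localisation iso} completes the top row as far as $\D(G)$.

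For $i_\phi$ I would note that $\D(K)\ast H$ is the subring of $\mathfrak{F}(G,\phi)$ consisting of the finitely supported series, with identical crossed-product structure maps (finite subsets of $H$ being trivially $\leq_\phi$-well-ordered, and $\mu$ trivial). Since $\mathfrak{F}(G,\phi)$ is a skew-field (\cref{Malcev Neumann cross}), every non-zero element of $\D(K)\ast H$ is invertible in it, so the universal property of the Ore localisation extends the inclusion $\D(K)\ast H\into\mathfrak{F}(G,\phi)$ to a ring map $T^{-1}(\D(K)\ast H)\to\mathfrak{F}(G,\phi)$; composing with \eqref{ore localisation iso} gives $i_\phi\colon\D(G)\to\mathfrak{F}(G,\phi)$, injective because $\D(G)$ is a skew-field.

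The heart of the argument is the bottom row. The key observation is that $\widehat{\Z G}_\phi$ has three equivalent descriptions: as formal sums over $G$ as in \cref{def:sikorav-novikov}; as sums $\sum_{h\in H}a_h h$ with $a_h\in\Z K$; and as sums $\sum_{q\in\im\phi}b_q q$ with $b_q\in\Z L$. The passage between these is regrouping of terms along the cosets of $K$, resp.\ of $L$, using the splittings fixed above; the finiteness condition of \cref{def:sikorav-novikov} shows both that the coefficient of each coset is a \emph{finite} sum (hence lies in $\Z K$, resp.\ $\Z L$) and --- because $\phi$ is constant on each such coset --- that for every $C$ only finitely many cosets below $\phi$-level $C$ carry a non-zero coefficient. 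This identifies $\widehat{\Z G}_\phi$ with the Sikorav--Novikov completion of $\Z L\ast\im\phi$ with respect to $\iota$, under which $\Z L\ast\im\phi$ includes as its finitely supported elements, giving the bottom-left arrow and the isomorphism $\widehat{\Z L\ast\im\phi}_\iota\cong\widehat{\Z G}_\phi$. Passing instead to the middle description: the condition ``$\{h\in H:\bar\phi(h)<C,\ a_h\neq 0\}$ is finite for every $C$'' forces the support of $\sum_h a_h h$ to be $\leq_\phi$-well-ordered (given a non-empty subset $T$ of the support, fix $t_0\in T$; the support elements of $\bar\phi$-value $\leqslant\bar\phi(t_0)$ form a non-empty finite set, whose $\leq_\phi$-minimum is the minimum of $T$ since $\bar\phi$ is order-preserving); as $\Z K\subseteq\D(K)$ and the structure maps agree, $\widehat{\Z G}_\phi$ is thereby a subring of $\mathfrak{F}(G,\phi)$, which is the monomorphism $j_\phi$.

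Finally, for commutativity I would observe that, by construction, each map above restricts on the common subring $\Z G$ to the canonical inclusion into its target. For the triangle and the square involving $\D(G)$ this suffices, since $\D(G)$ is an epic $\Q G$-field, so two ring maps out of it agreeing on $\Q G$ coincide; the remaining (bottom) triangle commutes because the isomorphism $\widehat{\Z L\ast\im\phi}_\iota\cong\widehat{\Z G}_\phi$ is \emph{defined} as the extension of $\Z L\ast\im\phi\cong\Z G$. I expect the one genuinely fiddly step to be this bottom row: verifying that the three avatars of $\widehat{\Z G}_\phi$ really coincide (in particular that the stated coefficients land in $\Z K$, resp.\ $\Z L$) and that $j_\phi$ preserves multiplication, i.e.\ that the convolution product of two $\bar\phi$-bounded-below series, computed in $\widehat{\Z G}_\phi$, agrees with their Malcev--Neumann product in $\mathfrak{F}(G,\phi)$ --- which amounts to keeping the crossed-product structure maps straight across all the identifications.
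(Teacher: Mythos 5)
Your proposal is correct and follows essentially the same route as the paper: $i_\phi$ is obtained from the universal property of the Ore localisation applied to the inclusion $\D(K) \ast H_1(G)_f \into \mathfrak{F}(G,\phi)$, and $j_\phi$ is the rewriting of Novikov series as series with $\Z K$-coefficients indexed by $H_1(G)_f$, whose supports are $\leq_\phi$-well-ordered precisely because $\phi$ is order-preserving; the paper merely leaves the regrouping, multiplicativity and commutativity checks as ``clear''. One cosmetic slip: in your well-ordering argument you should take the $\leq_\phi$-minimum of the finite set of elements \emph{of $T$} (rather than of the whole support) with $\bar\phi$-value at most $\bar\phi(t_0)$, so that the resulting minimum actually lies in $T$.
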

\begin{proof}
        All maps apart from $i_\phi$ and $j_\phi$ are either obvious or have already been explained. The commutativity of the upper and lower triangle is clear.

        Since $\mathfrak{F}(G,\phi)$ is a skew-field, the universal property of the Ore localisation allows us to define
        \[i_\phi\colon \D(G) \cong T^{-1}(\D(K) * H_1(G)_f)\to \mathfrak{F}(G,\phi)\]
        as the localisation of the obvious inclusion
        \[ \D(K) * H_1(G)_f\to \mathfrak{F}(G,\phi)\]

        The definition of
        \[j_\phi\colon \hat{\Z G}_\phi\cong \widehat{\Z L * \im \phi}_{\iota} \to \mathfrak{F}(G,\phi)\]
        uses the same formulae as the composition
        \[ \Z L *\im \phi \overset{\cong}{\longrightarrow} \Z G \overset{\cong}{\longrightarrow}      \Z K * H_1(G)_f\]
        and we need to verify that this indeed maps to formal sums with well-ordered support with respect to $\leq_\phi$. But this follows directly from the fact that
        \[\phi\colon H_1(G)_f \to \R \]
        is order-preserving. The commutativity of the right-hand triangle follows immediately.
\end{proof}

\begin{dfn}
        Given $\phi\in\Hom(G,\R)$ and
        \[ x= \!\!\!\!\!\! \sum_{h\in H_1(G)_f} \!\!\!\!\!\! x_h\cdot h\in \D(K) * (H_1(G)_f, \leq_\phi)\]
        we set
        \[ S_\phi(x) = \minsupp_\phi(x) = \big\{h\in\supp(x)\mid \phi(h) = \min\{ \phi(\supp(x))\}\big\}\]
        and define $\mu_\phi\colon \mathfrak{F}(G,\phi)^\times\to \mathfrak{F}(G,\phi)^\times$ by
        \[
                \mu_\phi\big( \!\!\!\!\!\! \sum_{h\in H_1(G)_f} \!\!\!\!\!\! x_h\cdot h  \big) = \sum_{h\in S_\phi(x)} x_h\cdot h
        \]
\end{dfn}

We record the following properties.

\begin{lem}\label{mu}
        Let $\phi\in\Hom(G,\R)$.
        \begin{enumerate}
                \item\label{mu:homo} The map $\mu_\phi$ is a group homomorphism.
                \item\label{mu:restrict} It restricts to maps (denoted by the same name)
                \begin{gather*}
		                \mu_\phi\colon \D(G)^\times \to\D(G)^\times\\
                        \mu_\phi\colon \hat{\Z G}_\phi^\times\to \Z G \s- \{0\}
                \end{gather*}
                and the latter map agrees with $\mu_\phi\colon \hat{\Z G}_\phi^\times\to \Z G \s- \{0\}$ from \cref{alpha}.
        \end{enumerate}
\end{lem}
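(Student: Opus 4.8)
The plan is to prove the first statement directly and then read off the second almost for free. The organising idea is that $\phi$ induces a valuation-type function on $\mathfrak{F}(G,\phi)=\D(K)\ast(H_1(G)_f,\leq_\phi)$: for $0\ne x=\sum_{h\in H_1(G)_f}x_h\cdot h$ set $v(x):=\min\phi(\supp(x))$. One first records that this minimum exists: since $\supp(x)$ is well-ordered by $\leq_\phi$ and $\phi$ is $\leq_\phi$-order-preserving, $\phi(\supp(x))$ admits no infinite strictly decreasing sequence, hence is a well-ordered subset of $\R$. With this notation $S_\phi(x)=\minsupp_\phi(x)$ is precisely $\{h\in\supp(x)\mid\phi(h)=v(x)\}$, and $\mu_\phi(x)\ne 0$ because distinct elements of $H_1(G)_f$ are $\D(K)$-independent in $\mathfrak{F}(G,\phi)$; thus $\mu_\phi$ genuinely maps $\mathfrak{F}(G,\phi)^\times$ to itself and $v(\mu_\phi(x))=v(x)$.

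For the homomorphism property, let $x,y\in\mathfrak{F}(G,\phi)^\times$ and write $x=\mu_\phi(x)+x'$, $y=\mu_\phi(y)+y'$, where $x'=x-\mu_\phi(x)$ is supported on $\supp(x)\s-S_\phi(x)$, so every element of $\supp(x')$ has $\phi$-value $>v(x)$ (similarly for $y'$; either of $x',y'$ may be zero). Expanding,
\[
 xy=\mu_\phi(x)\mu_\phi(y)+\mu_\phi(x)y'+x'\mu_\phi(y)+x'y'.
\]
Because $\phi$ descends to a homomorphism $H_1(G)_f\to\R$ and the crossed-product multiplication of $\mathfrak{F}(G,\phi)$ respects the $H_1(G)_f$-grading (a term with $H_1(G)_f$-component $h$ times one with component $k$ has component $hk$), every term of a product $ab$ has $\phi$-value $\phi(h)+\phi(k)$ for suitable $h\in\supp(a)$, $k\in\supp(b)$; hence $\phi(\supp(ab))\subseteq\{\phi(h)+\phi(k)\mid h\in\supp(a),\,k\in\supp(b)\}$. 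Consequently every element of $\supp(\mu_\phi(x)\mu_\phi(y))$ has $\phi$-value exactly $v(x)+v(y)$, while the three remaining summands above are supported at $\phi$-values $>v(x)+v(y)$. Since $\mathfrak{F}(G,\phi)$ is a skew-field, $\mu_\phi(x)\mu_\phi(y)\ne 0$; comparing $\phi$-values therefore yields $v(xy)=v(x)+v(y)$ and $\mu_\phi(xy)=\mu_\phi(x)\mu_\phi(y)$. Together with $\mu_\phi(1)=1$ this shows $\mu_\phi$ is a group homomorphism, so in particular $\mu_\phi(x^{-1})=\mu_\phi(x)^{-1}$.

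For the second statement I would argue as follows. Identifying $\D(G)$ with its image under $i_\phi$ and using $\D(G)\cong T^{-1}(\D(K)\ast H_1(G)_f)$, write $x\in\D(G)^\times$ as $x=s^{-1}r$ with $r,s\in(\D(K)\ast H_1(G)_f)\s-\{0\}$. By the first statement $\mu_\phi(x)=\mu_\phi(s)^{-1}\mu_\phi(r)$; now $\mu_\phi(r)$ and $\mu_\phi(s)$ have support contained in the finite sets $\supp(r)$, $\supp(s)$, hence lie in $\D(K)\ast H_1(G)_f\subseteq\D(G)$ with $\mu_\phi(s)\ne 0$, so $\mu_\phi(x)\in\D(G)^\times$ (here we use that $i_\phi$ is a ring map carrying the inverse of a unit of $\D(G)$ to its inverse in $\mathfrak{F}(G,\phi)$). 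For $\hat{\Z G}_\phi^\times$, identify $\hat{\Z G}_\phi$ with its image under $j_\phi$; since $\phi$ vanishes on $K$, a well-defined $\phi$-value attaches to each $h\in H_1(G)_f$, and the Sikorav--Novikov condition forces $\{h\in\supp(x)\mid\phi(h)<v(x)+1\}$, hence $S_\phi(x)$, to be finite, with the $h$-coefficient of $\mu_\phi(x)$ a finite $\Z$-combination of elements of $K$; thus $\mu_\phi(x)\in\Z G\s-\{0\}$. Finally, the map of \cref{alpha} returns the part of $x$ supported on the group elements of minimal $\phi$-value; as $\phi(g)=\phi(p_0(g))$ for $g\in G$ and the isomorphism $\Z G\cong\Z K\ast H_1(G)_f$ matches $g\in G$ with $(gs(h)^{-1})\cdot h$ for $h=p_0(g)$ (with no cancellation within a coset), the two descriptions of the ``minimal $\phi$-support part'' correspond, so the two maps agree.

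The only delicate point is the valuation behaviour invoked in the second paragraph: one needs that $\mu_\phi(x)\mu_\phi(y)$ is nonzero and has support entirely at $\phi$-value $v(x)+v(y)$, so that it is not absorbed by the higher-order terms. Both facts are immediate here — nonvanishing because $\mathfrak{F}(G,\phi)$ is a domain, and $\phi$-homogeneity because $\phi$ is additive and the crossed product is $H_1(G)_f$-graded — so I do not expect a genuine obstacle; the remaining work is routine support bookkeeping, the mild technical input being that a $\leq_\phi$-well-ordered support has well-ordered image under $\phi$.
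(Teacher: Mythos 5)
Your proposal is correct, and it is exactly the routine verification the authors had in mind: the paper's own proof of this lemma is literally ``This is obvious,'' and your leading-term/valuation argument in the Malcev--Neumann ring (multiplicativity via the $H_1(G)_f$-grading and the domain property, plus the support bookkeeping for the two restrictions and the comparison with the definition in \cref{alpha}) is the standard way to make that explicit.
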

\begin{proof}
	This is obvious.
\end{proof}

We now give a practical method for calculating the BNS invariant for descending HNN-extensions of $F_2$.

\begin{thm}
\label{formula for BNS}
Let $G$ be a descending HNN extension of $F_2$. Let \[\phi \in\Hom(G,\R) \s- \{0\}\] Suppose that $x,y$ are generators of $F_2$ for which $\phi(x), \phi(y) > 0$, and let $g \colon F_2\to F_2$ be a monomorphism such that $G = F_2*_g$, and such that $g(x),g(y)$ have no common prefix.
Then $[-\phi] \in \Sigma(G)$ \iff \[\mu_\phi(1 + t \fox{g(x)}{y} - t\fox{g(y)}{y}) = \pm z\] for some $z \in G$.
\end{thm}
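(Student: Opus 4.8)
The plan is to reduce the statement to \cref{invariants for fbc}(\ref{fbc:bns}) applied with the generator $s = x$. Since $\phi(x) > 0 \neq 0$, that part of the theorem says $[-\phi] \in \Sigma(G)$ if and only if the matrix $A(g;\S,x)$ acts surjectively (equivalently bijectively) on $\widehat{\Z G}_\phi^2$. So the first step is to pin down $A(g;\S,x)$: from the proof of \cref{invariants for fbc}(\ref{fbc:l2tor}) the matrix $A(g;\S)$ has rows indexed by $x,y$ with entries $\operatorname{Id} - t\cdot F(g)$ in the $s_1,s_2$-columns and $s_i - 1$ in the $t$-column; deleting the $x$-column leaves the $2\times 2$ matrix whose first column contains the Fox derivatives with respect to $y$ of the relators, i.e.
\[
A(g;\S,x) = \begin{pmatrix} -t\,\fox{g(x)}{y} & x - 1 \\ 1 - t\,\fox{g(y)}{y} & y - 1 \end{pmatrix},
\]
matching the display for $A(g;\S,x)$ already computed in the (currently commented-out) \cref{well-behaved}.

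The second step is to transfer the question into the Malcev--Neumann skew-field $\mathfrak{F}(G,\phi)$, using the lemma that embeds $\widehat{\Z G}_\phi$ into it, and to use the criterion for units in $\widehat{\Z G}_\phi$ supplied by \cref{invertible} (valid because $G$ is locally indicable by \cref{G is locally indicable}): an element is a unit in $\widehat{\Z G}_\phi$ precisely when $\mu_\phi$ of it is $\pm h$ for some $h \in G$. The key manipulation is to row-reduce $A(g;\S,x)$ over $\widehat{\Z G}_\phi$. Since $\phi(x) > 0$, the element $x - 1 \in \widehat{\Z G}_\phi$ has $\mu_\phi(x-1) = -1 = \pm 1$, hence is a unit in $\widehat{\Z G}_\phi$; likewise $y-1$ is a unit. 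So I may clear the first column: multiply on the right by an elementary (hence invertible over $\widehat{\Z G}_\phi$) matrix to kill the $(2,1)$-entry using the unit in the $(1,1)$-slot, or more cleanly perform a column operation turning $A(g;\S,x)$ into an upper-triangular matrix. Surjectivity (equivalently invertibility) of $A(g;\S,x)$ over $\widehat{\Z G}_\phi$ is then equivalent to invertibility over $\widehat{\Z G}_\phi$ of the single scalar entry obtained by the reduction; tracking the arithmetic, that entry is (up to left/right multiplication by the units $x-1$, $y-1$ and by $t$, all of which have $\mu_\phi$ equal to a single group element) the element
\[
1 + t\,\fox{g(x)}{y} - t\,\fox{g(y)}{y}.
\]
Applying \cref{invertible} once more, invertibility of this element in $\widehat{\Z G}_\phi$ is equivalent to $\mu_\phi\bigl(1 + t\,\fox{g(x)}{y} - t\,\fox{g(y)}{y}\bigr) = \pm z$ for some $z \in G$, which is the claim.

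The third step is bookkeeping to make sure the reduction is legitimate and that the ``no common prefix'' hypothesis is used where needed. The reduction above is exactly the passage, already carried out symbolically in the commented-out Lemma, from $A(g;\S,t)\cdot \operatorname{diag}(1-x,1-y)\cdot(\text{elementary})$ to $A(g;\S,x)$ times a very-well-behaved factor; here I run it in $\widehat{\Z G}_\phi$ rather than in $\D(G)$. The point of $\phi(x),\phi(y) > 0$ is precisely that $1-x$ and $1-y$ are units in $\widehat{\Z G}_\phi$, so these factors do not affect invertibility. The ``no common prefix of $g(x), g(y)$'' hypothesis guarantees non-degeneracy: it ensures $A(g;\S,x)$ is genuinely invertible over $\D(G)$ (see \cref{non-degenerate}, and the base-case argument in the commented-out Lemma showing $\fox{g(x)}{y}$ and $\fox{g(y)}{y}$ cannot have equal $\mu$), so that the scalar entry we isolate is nonzero in $\D(G)$ and the equivalence ``surjective $\Leftrightarrow$ bijective'' from \cref{invariants for fbc}(\ref{fbc:bns}) applies verbatim. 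The main obstacle I anticipate is the last one: carefully justifying that reducing a $2\times 2$ matrix over the non-commutative ring $\widehat{\Z G}_\phi$ — using that one column entry is a unit — yields an invertibility criterion for precisely the claimed scalar, keeping track of the noncommutative corrections (the $d^{-1}cd$ type terms visible in the $2\times 2$ Dieudonné formula) and checking they are absorbed by $\mu_\phi$; once the algebra is organized so that every factor stripped off is a $\mu_\phi$-monomial, the statement drops out.
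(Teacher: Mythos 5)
Your overall route is the paper's: invoke \cref{invariants for fbc}\,(\ref{fbc:bns}) with $s = x$, row-reduce $A(g;\S,x)$ over $\widehat{\Z G}_\phi$ using that $y-1$ is a unit there, and convert invertibility of the remaining diagonal entry into a $\mu_\phi$-statement via \cref{invertible}. But the step you defer as ``tracking the arithmetic'' is exactly where the content lies, and as you state it, it is false. The entry produced by the reduction is
\[
-t\fox{g(x)}{y} \;-\; \bigl(1-t\fox{g(y)}{y}\bigr)(y-1)^{-1}(x-1),
\]
and this is \emph{not} equal to $\pm\bigl(1 + t\fox{g(x)}{y} - t\fox{g(y)}{y}\bigr)$ up to left/right multiplication by units of the form $\pm$(group element): the correction factor $(y-1)^{-1}(x-1)$ multiplies only the second summand, so no global monomial factor can be stripped off. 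What is true, and what the intended argument establishes, is only that the two elements have the same image under $\mu_\phi$. Since $\phi(x),\phi(y)>0$ one has $\mu_\phi\bigl((y-1)^{-1}(x-1)\bigr)=1$, so $\mu_\phi$ of the second summand equals $\mu_\phi\bigl(-(1-t\fox{g(y)}{y})\bigr)$; and because $g(x)$ and $g(y)$ have no common prefix, the supports of $t\fox{g(x)}{y}$ and $1-t\fox{g(y)}{y}$ intersect trivially (the only candidate common element is $t$, which would force both $g(x)$ and $g(y)$ to begin with $y$), so the minimal parts of the two summands cannot cancel and $\mu_\phi$ of the sum agrees with $\mu_\phi\bigl(-1 - t\fox{g(x)}{y} + t\fox{g(y)}{y}\bigr)$. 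This support-disjointness computation is the one genuinely new point of the proof, and it is missing from your proposal.

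Relatedly, you misplace the role of the no-common-prefix hypothesis: it is not needed for non-degeneracy. By \cref{non-degenerate}, the matrices $A(g;\S,s)$ are invertible over $\D(G)$ for \emph{every} monomorphism $g$, and the equivalence of surjectivity and bijectivity over $\widehat{\Z G}_\phi$ in \cref{invariants for fbc}\,(\ref{fbc:bns}) comes from stable finiteness of $\widehat{\Z G}_\phi$, not from any condition on prefixes. The hypothesis enters solely in the $\mu_\phi$-computation described above; without it, cancellation between the minimal parts of $t\fox{g(x)}{y}$ and $1-t\fox{g(y)}{y}$ could occur, and the displayed criterion would no longer detect invertibility of the reduced diagonal entry. (Appealing to the commented-out lemma for this is also not a substitute: that argument concerns a different non-vanishing statement and, in any case, is not part of the paper.)
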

\begin{proof}
By \cref{invariants for fbc}, (\ref{fbc:bns}), we have $-\phi\in\Sigma(G)$ if and only if the map
\[A \colon {\hat{\Z G_\phi}}^2 \to {\hat{\Z G_\phi}}^2\]
        is an isomorphism, where
        \[ A = A(g;\S,x) = \left( \begin{array}{cc}
        -t \fox{g(x)}{y} & x-1 \\
        1-t\fox{g(y)}{y} & y-1
        \end{array} \right)  \]

         Since $\phi(y) \neq 0$, the element $y-1$ is invertible in $\hat{\Z G_\phi}$, and thus we may perform an elementary row operation over $\hat{\Z G_\phi}$ to obtain a triangular $\hat{\Z G_\phi}$-matrix
    \[ B = \left( \begin{array}{cc}
        -t \fox{g(x)}{y} - (1-t\fox{g(y)}{y})(y-1)^{-1}(x-1) & 0 \\
        1-t\fox{g(y)}{y} & y-1
        \end{array} \right) \]
        Note that $A$ is invertible over $\hat{\Z G_\phi}$ if and only if the diagonal entries of $B$ are invertible in $\hat{\Z G_\phi}$.
        One of the diagonal entries is $y-1$, which we already know to be invertible. The other one is invertible \iff
        \[ \mu_\phi\big(-t \fox{g(x)}{y} - (1-t\fox{g(y)}{y})(y-1)^{-1}(x-1)\big) = \pm z\]
        for some $z\in G$, thanks to \cref{invertible}. But
        \[ \mu_\phi\big((1-t\fox{g(y)}{y})(y-1)^{-1}(x-1)\big) = \mu_\phi(1-t\fox{g(y)}{y})\]
        and the supports of $1-t\fox{g(y)}{y}$ and $t \fox{g(x)}{y}$ have a trivial intersection: the lack of common prefixes of $g(x)$ and $g(y)$ implies that the only element in $G$ which could lie in both supports is $t$, but then we would need to have both $g(x)$ and $g(y)$ starting with $y$, which would yield a non-trivial common prefix.

        This implies
        \begin{align*}
         & \phantom{{}={}} \mu_\phi\big(-t \fox{g(x)}{y} - (1-t\fox{g(y)}{y})(y-1)^{-1}(x-1)\big)\\
         &= \mu_\phi\big(-t \fox{g(x)}{y} - 1+t\fox{g(y)}{y}\big) \qedhere
        \end{align*}
\end{proof}

\begin{rmk}
The above theorem does not apply to $\phi \in H^1(G;\R) \s- \{0\}$ which have $F_2 \leqslant \ker \phi$. There are however only two such cohomology classes (up to scaling): $\psi$, the class induced by the HNN-extension $G = F_2 \ast_g$, which lies in $\Sigma(G)$ \iff $g \colon F_2 \to F_2$ is an isomorphism, and $-\psi$, which always lies in $\Sigma(G)$.

For every other $\phi \in H^1(G;\R) \s- \{0\}$ one easily finds appropriate generators $x$ and $y$, and then any monomorphism $F_2 \to F_2$ inducing $G$ can be made into the desired form by postcomposing it with a conjugation of $F_2$. Such a postcomposition does not alter the isomorphism type of $G$.
\end{rmk}

Next we are going to relate the $L^2$-torsion polytope $P_{L^2}(G)$ to the BNS invariant for $G = F_2*_g$. For this we need some more preparations.

\begin{dfn}\label{face map}
	Let $H$ be a finitely generated free-abelian group. Let $P\subseteq H\otimes_\Z\R$ be a polytope and take $\phi\in\Hom(H,\R)$. We define the \emph{minimal face} of $P$ for $\phi$ to be
	\[F_\phi(P) = \{p\in P\mid \phi(p) = \min\{ \phi(q)\mid q\in P\} \}\]
	It is easy to see that $F_\phi$ respects Minkowski sums and hence induces group homomorphisms
	\begin{gather*}
		F_\phi\colon \Pol(H) \to\Pol(H)\\
		F_\phi\colon \Pol_T(H) \to\Pol_T(H)\\
	\end{gather*}
\end{dfn}

\begin{dfn}
\label{polytope equivalent}
	Let $K = \ker (p_0\colon G\to H_1(G)_f =: H)$, and let $x \in \D(G) = T^{-1}(\D(K) \ast H)$ and $\phi,\psi\in\Hom(G,\R) = \Hom(H,\R)$. We call $\phi$ and $\psi$ \emph{$x$-equivalent} if we can write $x = u^{-1}v$ with $u,v \in \D(K) \ast H$ in such a way that
	\[F_\phi(P(u)) = F_\psi(P(u)) \text{ and } F_\phi(P(v)) = F_\psi(P(v))\]

\end{dfn}

\smallskip

We are aiming at proving that the universal $L^2$-torsion determines the BNS-invariant for descending HNN extensions of free groups. In this process the following lemma is crucial in order to extract algebraic information about Dieudonné determinants from geometric properties of their polytopes.

\begin{lem}\label{polytope and minimal}
	Let $x\in\D(G)^{\times}$ and $\phi, \psi\in\Hom(G,\R)$.
	If $\phi$ and $\psi$ are $x$-equivalent, then
	\[      \mu_\phi(x) = \mu_\psi(x)\]
\end{lem}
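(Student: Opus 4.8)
The plan is to reduce the statement to a single combinatorial identity relating $\mu_\chi$ to the face map $F_\chi$ on the crossed product $\D(K)\ast H$, where $H = H_1(G)_f$. By the definition of $x$-equivalence (\cref{polytope equivalent}) we may fix a decomposition $x = u^{-1}v$ with $u,v\in\D(K)\ast H$ such that
\[ F_\phi(P(u)) = F_\psi(P(u)) \qquad\text{and}\qquad F_\phi(P(v)) = F_\psi(P(v)). \]
Since $x\in\D(G)^\times$ and $\D(G) = T^{-1}(\D(K)\ast H)$ with $T$ the set of non-zero elements, both $u$ and $v$ are automatically non-zero, hence lie in $T$ and are therefore units in $\D(G)$; this is what makes the manipulations below legitimate.

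The heart of the matter is the following elementary observation, which I would record first: for any non-zero $w = \sum_{h\in H} w_h\cdot h\in\D(K)\ast H$ and any $\chi\in\Hom(G,\R) = \Hom(H,\R)$ one has
\[ \mu_\chi(w) = \sum_{h\in\supp(w)\cap F_\chi(P(w))} w_h\cdot h. \]
Indeed, $P(w)$ is by definition the convex hull of the finite set $\supp(w)$, so the linear functional $\chi$ attains its minimum over $P(w)$ at a vertex, and this minimum equals $\min\{\chi(h)\mid h\in\supp(w)\}$. Hence a lattice point $h\in\supp(w)$ lies on the face $F_\chi(P(w))$ precisely when $\chi(h)$ realises this minimum, which is exactly the defining condition $h\in S_\chi(w)$ entering the definition of $\mu_\chi$. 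Applying this with $w = u$ and with $w = v$, the two face equalities above immediately give $\mu_\phi(u) = \mu_\psi(u)$ and $\mu_\phi(v) = \mu_\psi(v)$.

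It then remains only to combine these using multiplicativity. By \cref{mu}, $\mu_\phi$ restricts to a group homomorphism $\D(G)^\times\to\D(G)^\times$ (it is a group homomorphism on the Malcev--Neumann skew-field $\mathfrak{F}(G,\phi)^\times$, into which $\D(G)^\times$ embeds), and likewise for $\mu_\psi$; moreover $\mu_\phi(u)$ is a non-zero element of $\D(K)\ast H$, hence lies in $T\subseteq\D(G)^\times$, so $\mu_\phi(u)^{-1} = \mu_\phi(u^{-1})$ makes sense in $\D(G)$. Therefore
\[ \mu_\phi(x) = \mu_\phi(u)^{-1}\mu_\phi(v) = \mu_\psi(u)^{-1}\mu_\psi(v) = \mu_\psi(x), \]
which is the claim.

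I do not anticipate a serious obstacle. The only two points that need a word of care are (i) that $u$, $v$ and the elements $\mu_\phi(u)$, $\mu_\phi(v)$ are genuine units of $\D(G)$, which follows from $\D(G)$ being the Ore localisation of $\D(K)\ast H$ at its non-zero elements; and (ii) that the value of $\mu_\chi$ on an element of $\D(K)\ast H$ is the same whether it is computed inside $\D(K)\ast H$ or inside $\mathfrak{F}(G,\chi)$, which is immediate because elements of $\D(K)\ast H$ have finite — hence well-ordered — support. The substance of the proof is the displayed dictionary between $\mu_\chi$ and $F_\chi$.
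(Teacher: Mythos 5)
Your proof is correct and follows essentially the same route as the paper: decompose $x = u^{-1}v$ as in the definition of $x$-equivalence, observe that equality of the minimal faces $F_\phi(P(u)) = F_\psi(P(u))$ (and likewise for $v$) forces $\minsupp_\phi = \minsupp_\psi$ and hence $\mu_\phi = \mu_\psi$ on $u$ and $v$, then conclude via the multiplicativity of $\mu_\phi$ from \cref{mu}. The only difference is that you spell out the dictionary between $S_\chi(w)$ and $\supp(w)\cap F_\chi(P(w))$ and the unit/Ore-localisation bookkeeping, which the paper leaves implicit.
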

\begin{proof}
	Write $x = u^{-1}v$ with $u,v\in \D(K)* H_1(G)_f$, so that by assumption we have
	\[F_\phi(P(u)) = F_\psi(P(u)) \text{ and } F_\phi(P(v)) = F_\psi(P(v))\]
	
	But $F_\phi(P(u)) = F_\psi(P(u))$ implies
	\[ \minsupp_\phi(u) = \minsupp_\psi(u)\]
	and so
	\[\mu_\phi(u) = \mu_\psi(u)\]
	The same argument applies to $v$ and so the claim follows from
	\begin{equation*}\label{xts}
	\mu_\phi(x) = \mu_\phi(u)^{-1}\cdot \mu_\phi(v) \qedhere
	\end{equation*}
\end{proof}

The following is similar to \cite[Theorem 1.1]{FriedlTillmann2015}; although we do not provide markings on the polytopes which fully detect the BNS-invariant, \cref{formula for BNS} makes up for this lack. The crucial point now is that the BNS invariant is locally determined by a polytope.

\begin{thm}
\label{main thm BNS}
        Let $g \colon F_2\to F_2$ be a monomorphism and let $G = F_2*_g$ be the associated descending HNN extension. Given $\phi \in\Hom(G,\R) \s- \{0\}$ such that $-\phi$ is not the epimorphism induced by $F_2 \ast_g$, there exists an open neighbourhood $U$ of $[\phi]$ in $S(G)$ and an element $d\in\D(G)^\times$ such that:
        \begin{enumerate}
        	\item The image of $d$ under the quotient maps
		        \begin{equation*}
		        	\D(G)^\times \to \D(G)^\times / [\D(G)^\times,\D(G)^\times] \cong K_1^w(\Z G) \to \Wh^w(G)
	        	\end{equation*}
	        	is $-\tor_u(G)$. In particular $P_{L^2}(G) = P(d)$ in $\Pol_T(H_1(G)_f)$.
	        \item For every $\psi, \psi' \in\Hom(G,\R) \s- \{0\}$ which satisfy $[\psi], [\psi'] \in U$ and are $d$-equivalent, we have $[-\psi] \in \Sigma(G)$ if and only if $[-\psi'] \in \Sigma(G)$.
		 \end{enumerate}
\end{thm}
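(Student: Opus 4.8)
The plan is to extract $d$ from the presentation matrix $A(g;\S,s)$ of \cref{invariants for fbc}, to obtain $U$ by a continuity argument, and to rewrite the condition $[-\psi]\in\Sigma(G)$ as ``$\mu_\psi(d)=\pm h$ for some $h\in G$'', which by \cref{polytope and minimal} depends on $\psi$ only through $d$-equivalence.

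Let $\theta\colon G\to\Z$ be the epimorphism induced by $F_2\ast_g$. Suppose first that $[\phi]=[\theta]$. Then $[-\phi]=[-\theta]\in\Sigma(G)$, since $F_2\ast_g$ exhibits $G$ as a descending HNN extension over the finitely generated subgroup $F_2$ with $\theta$ induced, so \cite[Proposition 4.3]{Bierietal1987} applies; as $\Sigma(G)$ is open, I may choose $U\ni[\phi]$ with $[-\psi]\in\Sigma(G)$ for all $[\psi]\in U$, which makes (2) vacuous, and for (1) I put $s=t$. In the remaining case $[\phi]\notin\{[\theta],[-\theta]\}$ (the second possibility excluded by hypothesis), $\phi$ restricts non-trivially to $F_2$, so I choose a free basis $x,y$ of $F_2$ with $\phi(x)>0$ and $\phi(y)>0$ (possible for any non-zero functional on $F_2^{\mathrm{ab}}\cong\Z^2$), put $s=x$, and shrink $U$ around $[\phi]$ so that $\psi(x),\psi(y)>0$ for all $[\psi]\in U$ -- possible, these being open conditions. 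In both cases set $A=A(g;\S,s)$, a non-degenerate $2\times2$ matrix over $\Z G$ (\cref{non-degenerate}), and $d=\Ddetc(A)\cdot(s-1)^{-1}\in\D(G)^\times$; by \cref{invariants for fbc}(\ref{fbc:l2tor}) the image of $d$ in $\Wh^w(G)$ is $[A]-[s-1]=-\tor_u(G)$ and $P(d)=P(\det_{\D(G)}(A))-P(s-1)=P_{L^2}(G)$, which is (1).

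It remains to prove (2), so we are in the second case. Fix $[\psi]\in U$. By \cref{invariants for fbc}(\ref{fbc:bns}), with $s=x$ (and $\psi(x)\neq0$), $[-\psi]\in\Sigma(G)$ iff $A\colon\widehat{\Z G}_\psi^{2}\to\widehat{\Z G}_\psi^{2}$ is bijective. Recall that, for $F_2=\langle x,y\rangle$, the second column of $A=A(g;\S,x)$ is $(x-1,\,y-1)^{\mathsf T}$. Since $\psi(y)>0$, the element $y-1$ is invertible in $\widehat{\Z G}_\psi$ (by \cref{invertible}, valid because $G$ is locally indicable by \cref{G is locally indicable}), so one elementary row operation over $\widehat{\Z G}_\psi$ brings $A$ to lower-triangular form; let $e$ be the resulting $(1,1)$-entry. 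Then $e\in\widehat{\Z G}_\psi$, and $A$ is invertible over $\widehat{\Z G}_\psi$ iff $e$ is, the other diagonal entry being the unit $y-1$. The formula for the Dieudonn\'e determinant of a $2\times 2$ matrix gives $\Ddetc(A)=e\cdot(y-1)$, hence $e=\Ddetc(A)(y-1)^{-1}\in\D(G)^\times$. By \cref{invertible}, $e$ is a unit in $\widehat{\Z G}_\psi$ iff $\mu_\psi(e)=\pm h$ for some $h\in G$; using \cref{mu} to identify $\mu_\psi$ on $\widehat{\Z G}_\psi^\times$ with $\mu_\psi$ on $\D(G)^\times$, together with the fact that $\mu_\psi$ is a homomorphism and $\mu_\psi(x-1)=\mu_\psi(y-1)=-1$ (as $\psi(x),\psi(y)>0$), I obtain $\mu_\psi(d)=\mu_\psi(\Ddetc A)\,\mu_\psi(x-1)^{-1}=\mu_\psi(\Ddetc A)\,\mu_\psi(y-1)^{-1}=\mu_\psi(e)$. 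Hence, for every $[\psi]\in U$, $[-\psi]\in\Sigma(G)$ iff $\mu_\psi(d)$ has the form $\pm h$ with $h\in G$. Finally, if $[\psi],[\psi']\in U$ are $d$-equivalent, then $\mu_\psi(d)=\mu_{\psi'}(d)$ by \cref{polytope and minimal}, so $[-\psi]\in\Sigma(G)\Leftrightarrow[-\psi']\in\Sigma(G)$, which is (2).

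I expect the main obstacle to be the coherence needed to carry out the computation: one must view $\Ddetc(A)$, the Novikov--Sikorav ring $\widehat{\Z G}_\psi$, and the truncation $\mu_\psi$ all inside one skew-field simultaneously, which is precisely what the Malcev--Neumann ring $\mathfrak F(G,\psi)$ and \cref{mu} are for; relatedly, the reduction of $A$ to triangular form must be legitimate over $\widehat{\Z G}_\psi$ and not merely over $\D(G)$, which is exactly why one needs the basis $x,y$ adapted to $\phi$ and a small enough $U$ on which $\psi(x),\psi(y)>0$.
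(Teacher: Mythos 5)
Your proposal is correct and follows essentially the same route as the paper's proof: the same element $d=\Ddetc(A(g;\S,s))\cdot(s-1)^{-1}$, the same neighbourhood $U$ (defined by $\psi(x),\psi(y)>0$ in the main case), and the same chain of reductions via \cref{invariants for fbc}(\ref{fbc:bns}), triangularisation over $\widehat{\Z G}_\psi$, \cref{invertible} and \cref{mu}, and finally \cref{polytope and minimal}. The only deviations are cosmetic: you identify the corner entry $e$ through the explicit $2\times 2$ formula $\Ddetc(A)=\bigl(a-bd^{-1}c\bigr)d$ rather than through invariance of $\det^c$ under row operations, and in the case $[\phi]=[\theta]$ you invoke openness of $\Sigma(G)$ together with $[-\theta]\in\Sigma(G)$ -- a shortcut the paper itself records in the remark following its proof.
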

\begin{proof}
	Suppose that $\ker \phi \neq F_2$. We easily find generators $x,y$ of $F_2$ for which $\phi(x), \phi(y)>0$.
	Set
	\[ U = \{ [\psi] \mid \psi(x)>0 \textrm{ and } \psi(y)>0 \} \subseteq S(G)\]
	This is clearly an open neighbourhood of $[\phi]$.	
	Suppose that $[\psi], [\psi'] \in U$.
	
	Let $A=A(g;\S,x)$, as in the proof of \cref{formula for BNS}. Since $\phi(y) \neq 0$, we can still form the matrix $B$ from \cref{formula for BNS}, and $[-\phi] \in \Sigma(G)$ \iff $A$ is invertible over
	$\hat{\Z G_{\phi}}$ \iff $B$ is invertible over
	$\hat{\Z G_{\phi}}$.
	
	Since $B$ is obtained from $A$ by an elementary row operation over $\mathfrak{F}(G, \phi)$ in which we add a multiple of the last row to another row, and such operations do not affect the canonical representative of the Dieudonn\'e determinant, we have
	\[ i_\phi(\det^c_{\D(G)}(A)) = \det^c_{\mathfrak{F}(G, \phi)}(A) = \det^c_{\mathfrak{F}(G, \phi)}(B) \]
	which is the product of the diagonal entries of $B$. Note that $B$ is invertible over $\hat{\Z G_\phi}$ if and only if the diagonal entries are invertible in $\hat{\Z G_\phi}$, which is the case if and only if their product is invertible in $\hat{\Z G_\phi}$ since $\hat{\Z G_\phi}$ is a domain. Thus, by \cref{mu}, $[-\phi]\in\Sigma(G)$ if and only if $\mu_\phi(\det^c_{\mathfrak{F}(G, \phi)}(B)) = \mu_\phi(i_\phi(\det^c_{\D(G)}(A))$ is of the form $\pm z$ for some $z\in G$.
	
	The same arguments apply to $\psi$ and $\psi'$ since $\psi(y) \neq 0 \neq \psi'(y)$. By \cref{mu}, it therefore suffices to prove
	\[ \mu_\psi(i_\psi(\det^c_{\D(G)}(A))) = \mu_{\psi'}(i_{\psi'}(\det^c_{\D(G)}(A)))\]
	
	If we put $d := \det^c_{\D(G)}(A)\cdot (x-1)^{-1}$, then this is equivalent to
	\[\mu_\psi(i_\phi(d)) = \mu_{\psi'}(i_\psi(d))\]
	since $\psi(x), \psi'(x) > 0$. But this is true by \cref{polytope and minimal} if we assume that $\psi$ and $\psi'$ are $d$-equivalent.
	
	\cref{invariants for fbc} (\ref{fbc:l2tor}) says that $d$ is mapped under the quotient maps
	\begin{equation*}
		\D(G)^\times \to \D(G)^\times / [\D(G)^\times,\D(G)^\times] \cong K_1^w(\Z G) \to \Wh^w(G)
	\end{equation*}
	to $-\tor_u(G)$, as desired. This finishes the proof in the case that $\ker \phi\neq F_2$.
	\smallskip
	
	Now suppose that $F_2 \leqslant \ker \phi$. Since $-\phi$ is not induced by the HNN extension, we must have $\phi(t) > 0$.
	
	Let us choose a generating set $x,y$ for $F_2$, and set
	\[ U = \big\{ [\psi] \mid \psi(t)> \vert \psi(z) \vert, z \in \supp \fox{g(y)}{y}  \big\} \]
	Again, this is an open neighbourhood of $[\phi]$.
	
	We proceed similarly to the previous case. Observing that $1-t$ is invertible over $\hat{\Z G_\psi}$ and $\hat{\Z G_{\psi'}}$ reduces the problem to verifying whether
	the matrix $A(g, \S, t)$ is invertible over $\hat{\Z G_\psi}$ and $\hat{\Z G_{\psi'}}$.
	The bottom-right entry of $A(g, \S, t)$ is $1 - t \fox{g(y)}y$, which is invertible for every $[\rho] \in U$ by construction. If $\psi$ and $\psi'$ are additionally $d$-equivalent for $d := \det^c_{\D(G)}(A(g, \S, t))\cdot (t-1)^{-1}$, we now continue in precisely the same way as before.
\end{proof}

\begin{rmk}
	Note that the result in the latter case also follows from the observation that $\Sigma(G)$ is open, since $[-\phi] \in \Sigma(G)$.
	
	Note also that our neighbourhood $U$ is very explicit, and rather large, especially when $\ker \phi \neq F_2$.
\end{rmk}

\begin{cor}
\label{finite bns}
Let $G = F_2*_g$ be a descending HNN extension. Then the Bieri--Neumann--Strebel invariant $\Sigma(G)$ has finitely many connected components.
\end{cor}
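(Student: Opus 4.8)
The plan is to cover the compact space $S(G)$ by finitely many open sets on each of which the $\Sigma$-invariant, up to the antipodal symmetry of $S(G)$, breaks into finitely many connected pieces, and then to use the elementary fact that a union of finitely many connected subsets of a space has at most that many connected components. First dispose of the case $b_1(G)=1$, where $S(G)$ consists of two points and there is nothing to prove; so assume $b_1(G)\geq 2$, so that $S(G)$ is a sphere and in particular compact. Since $[\psi]\mapsto[-\psi]$ is a self-homeomorphism of $S(G)$, it suffices to bound the number of connected components of $\Sigma^-(G):=\{[\psi]\in S(G)\mid[-\psi]\in\Sigma(G)\}$. Write $\phi_0$ for the epimorphism induced by $F_2\ast_g$, and put $H:=H_1(G)_f$ and $K:=\ker(G\to H)$.

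For each $[\phi]\in S(G)$ with $[\phi]\neq[-\phi_0]$, \cref{main thm BNS} supplies an open neighbourhood $U_{[\phi]}$ of $[\phi]$ and an element $d_{[\phi]}=u^{-1}v\in\D(G)^\times$ with $u,v\in\D(K)\ast H$ such that any two $d_{[\phi]}$-equivalent classes inside $U_{[\phi]}$ lie in $\Sigma^-(G)$ simultaneously. I would exploit that, by \cref{polytope equivalent}, two classes are $d_{[\phi]}$-equivalent as soon as they select the same minimal face (in the sense of \cref{face map}) of $P(u)$ and the same minimal face of $P(v)$; as $P(u)$ and $P(v)$ have only finitely many faces, this relation partitions $S(G)$ into finitely many pieces. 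For a fixed polytope $P$ and face $F$ the set $\{\psi\in\Hom(H,\R)\mid F_\psi(P)=F\}$ is a convex cone, and, reading off the proof of \cref{main thm BNS}, the lift of $U_{[\phi]}$ to $\Hom(H,\R)\setminus\{0\}$ is itself a convex cone, cut out by finitely many strict linear inequalities and missing the origin. Hence each piece of the partition, intersected with $U_{[\phi]}$, is the image in $S(G)$ of a convex set not containing $0$, so it is connected; and since all classes in such a piece are pairwise $d_{[\phi]}$-equivalent, \cref{main thm BNS} puts it wholly inside or wholly outside $\Sigma^-(G)$. Thus $\Sigma^-(G)\cap U_{[\phi]}$ is a finite union of connected sets.

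The only class not covered by \cref{main thm BNS} is $[-\phi_0]$, and I expect this to be the one genuinely delicate point. If $[-\phi_0]\in\Sigma^-(G)$ it is handled by openness of the $\Sigma$-invariant, which yields an open ball $V\ni[-\phi_0]$ contained in $\Sigma^-(G)$. In general one can still analyse a neighbourhood of $[-\phi_0]$ directly: for $[\psi]$ close to $[-\phi_0]$ one has $\psi(t)\neq0$, so $t-1$ is invertible in $\widehat{\Z G}_\psi$ and \cref{invariants for fbc}(\ref{fbc:bns}) (applied with $s=t$) reduces membership of $[\psi]$ in $\Sigma^-(G)$ to invertibility of $A(g;\S,t)$ over $\widehat{\Z G}_\psi$; re-running the argument of \cref{main thm BNS} with $A(g;\S,t)$ in place of $A(g;\S,x)$ — triangularising and applying \cref{invertible}, at the cost of also recording the minimal face of $P\big(\fox{g(y)}{y}\big)$ — exhibits $\Sigma^-(G)$ near $[-\phi_0]$ as a finite union of connected cells as well. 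Either way I obtain an open cover of the compact space $S(G)$ by $V$ together with the $U_{[\phi]}$ for $[\phi]\neq[-\phi_0]$, and I would pass to a finite subcover $V,U_1,\dots,U_m$.

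Finally, $\Sigma^-(G)$ is the union of $\Sigma^-(G)\cap V$ and the sets $\Sigma^-(G)\cap U_i$, each of which is either connected or a finite union of connected sets by the above; hence $\Sigma^-(G)$, and therefore $\Sigma(G)$, is a union of finitely many connected sets and so has finitely many connected components. The hard part, as noted, is the single class $[-\phi_0]$, which \cref{main thm BNS} excludes and which must be handled either by openness of $\Sigma(G)$ or by re-running the proof of \cref{main thm BNS} there essentially verbatim; the remainder — convexity of the normal cones of the relevant polytope faces, the explicit convex-cone shape of the neighbourhoods $U_{[\phi]}$, compactness of $S(G)$, and the topological bookkeeping — is routine.
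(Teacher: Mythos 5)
Your overall strategy is exactly the paper's: cover the compact sphere $S(G)$ by the neighbourhoods supplied by \cref{main thm BNS} together with one extra open set around the single class that theorem excludes, pass to a finite subcover, and bound the number of components locally using the finitely many $d$-equivalence (face-selection) classes. Your additional observation that each such class is an intersection of relatively open normal cones of $P(u)$ and $P(v)$, hence convex, so that its intersection with the conical lift of $U$ is connected, is correct and fills in a step the paper leaves implicit. Away from the excluded class the proposal is fine.

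The gap is at the one place you depart from the paper, namely the class $[-\phi_0]$, where $\phi_0$ denotes the epimorphism induced by $F_2\ast_g$. The paper disposes of this class in one line: since $G$ is itself a descending HNN extension with induced map $\phi_0$, one has $[-\phi_0]\in\Sigma(G)$ unconditionally, and openness of $\Sigma(G)$ gives a ball around $[-\phi_0]$ inside $\Sigma(G)$; no analysis of $A(g;\S,t)$ is attempted there. In your bookkeeping the set being decomposed is $-\Sigma(G)$, and $[-\phi_0]\in-\Sigma(G)$ holds if and only if $[\phi_0]\in\Sigma(G)$, which (as noted in the remark following \cref{formula for BNS}) happens exactly when $g$ is onto; so when $g$ is not surjective your first branch is unavailable and everything rests on the second branch, and that branch does not work as sketched. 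The case of \cref{main thm BNS} you want to imitate needs a fixed entry of $A(g;\S,t)$ that is a unit of $\widehat{\Z G}_\psi$ for \emph{every} $[\psi]$ in the chosen neighbourhood; there this is arranged by the condition $\psi(t)>|\psi(z)|$, which forces $\mu_\psi\bigl(1-t\fox{g(y)}{y}\bigr)=1$. Near $[-\phi_0]$ one has $\psi(t)<0$ dominating, so $\mu_\psi\bigl(1-t\fox{g(y)}{y}\bigr)$ is $-t$ times the $\psi$-minimal part of $\fox{g(y)}{y}$, and by \cref{invertible} (with \cref{G is locally indicable}) this is a unit only where that minimal part is a single monomial. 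Since $\supp\fox{g(y)}{y}\subseteq F_2=\ker\phi_0$, the locus where two support elements tie in $\psi$-value passes through $-\phi_0$ itself, hence meets every neighbourhood of $[-\phi_0]$ (and if two support elements have equal image in $H_1(G)_f$ the tie holds identically); on that locus the proposed pivot is not invertible in $\widehat{\Z G}_\psi$, the same obstruction applies to the other entries of $A(g;\S,t)$, and switching to $A(g;\S,x)$ or $A(g;\S,y)$ requires $\psi(x)\neq0$ resp.\ $\psi(y)\neq0$, so cannot cover an open neighbourhood of $[-\phi_0]$. Thus ``recording the minimal face of $P\bigl(\fox{g(y)}{y}\bigr)$'' does not decide invertibility of $A(g;\S,t)$ over $\widehat{\Z G}_\psi$ there, and your cover is not controlled at this class. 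To complete the argument you must either give a genuine analysis of $-\Sigma(G)$ on a whole neighbourhood of $[-\phi_0]$ in the non-surjective case, or explain how the paper's one-line openness argument (which concerns $\Sigma(G)$ at $[-\phi_0]$, not $-\Sigma(G)$) can be made to serve in your set-up; as written, neither is done.
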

\begin{proof}
 Let $\phi \colon G \to \Z$ be the induced map. We know that $[-\phi] \in \Sigma(G)$, and so there exists an open set $U_\phi$ in $S(G)$ around $[-\phi]$ which lies entirely in $\Sigma(G)$. For all other non-trivial morphisms $\psi \colon G \to \R$ we obtain open sets $U_\psi$ as in the previous theorem. Since $S(G)$ is compact, we only need to look at finitely many open sets $U_{\psi_1}, \dots, U_{\psi_m}$. Thus it is enough to show that each such open set contains finitely many connected components of $\Sigma(G)$. This is clear for $U_\phi$, so let us assume that we are looking at $U_\psi$ with $[\psi] \neq [-\phi]$.
 
 The theorem above tells us that within $U_\psi$, lying inside of $\Sigma(G)$ is well-defined on the equivalence classes of the relation of being $d$-equivalent. Since there are only finitely many $d$-equivalence classes, the result follows.
\end{proof}

\section{UPG automorphisms}

In this section we will strengthen \cref{main thm alex vs thurston} and \cref{main thm BNS} for a class of free group automorphisms.

\begin{defin}[Polynomially growing and UPG automorphism]\label{def:upg} An automorphism $f:F_n\to F_n$ is \emph{polynomially growing} if the quantity $d(1,f^n(g))$ grows at most polynomially in $n$ for every $g\in F_n$, where $1$ denotes the identity in $G$ and $d$ is some word metric on $F_n$. If, additionally, the image $\bar{f}$ of $f$ under the obvious map $\Aut(F_n) \to\GL(n,\Z)$ is unipotent, i.e. $\id - \bar{f}$ is nilpotent, then $f$ will be called \emph{UPG}.
\end{defin}

The main result of Cashen-Levitt \cite[Theorem 1.1]{CashenLevitt2014} reads as follows.

\begin{thm}\label{thm CashenLevitt}
	Let $G = F_n\rtimes_g \Z$ with $n\geq 2$ and $g$ polynomially growing. Then there are elements $t_1, ..., t_{n-1}\in G\s- F_n$ such that
	\[ \Sigma(G) = -\Sigma(G) = \{[\phi]\in S(G)\mid \phi(t_i)\neq 0\text{ for all } 1\leq i\leq n-1\}\]
\end{thm}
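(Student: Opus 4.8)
The statement to be proven is Theorem~\ref{thm CashenLevitt} (Cashen--Levitt), which I would approach as an application of the machinery developed in the previous sections, particularly \cref{main thm BNS} (and its UPG strengthening \cref{main corollary}), combined with an explicit understanding of the $L^2$-torsion polytope for UPG automorphisms.

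\textbf{Plan of proof.} The first observation is that $\Sigma(G) = -\Sigma(G)$: for a polynomially growing automorphism $g$, every power $g^k$ is also polynomially growing, and more importantly $g^{-1}$ is polynomially growing with unipotent image whenever $g$ is UPG; since $G = F_n \rtimes_g \Z \cong F_n \rtimes_{g^{-1}} \Z$, the symmetry $\Sigma(G) = -\Sigma(G)$ follows once we know that $[-\phi] \in \Sigma(G)$ implies $[\phi] \in \Sigma(G)$ for the induced class $\phi$. For this one uses that $F_n \rtimes_g \Z$ is a \emph{descending} HNN extension of $F_n$ with respect to both the stable letter $t$ and its inverse $t^{-1}$ (because $g$ is an automorphism), so the induced class $\phi$ and $-\phi$ both lie in $\Sigma(G)$ by \cref{main thm BNS}'s remark, or directly by \cite[Proposition 4.3]{Bierietal1987}. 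For non-induced classes one reduces to the polytope description below, which is manifestly symmetric under $\phi \mapsto -\phi$.

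\textbf{Reduction to the polytope.} By \cref{main corollary} (the UPG case of the main BNS theorem), for UPG $g$ we have $[\phi] \in \Sigma(G)$ if and only if $F_\phi(P_{L^2}(G)) = 0$ in $\Pol_T(H_1(G)_f)$, i.e.\ if and only if $\phi$ restricted to the minimal face $F_\phi(P_{L^2}(G))$ is constant --- equivalently $\|\phi\|_{P_{L^2}(G)} > 0$ unless $\phi$ is degenerate on the polytope. So the whole statement reduces to computing $P_{L^2}(G)$ explicitly for UPG automorphisms and checking that its vertex set has the right combinatorial structure. The key structural input is that for a UPG automorphism, using the train-track / relative train-track description (Bestvina--Handel, Bestvina--Feighn--Handel), $F_n$ admits a filtration by $g$-invariant free factors $F_n = K_{n-1} \trianglerighteq K_{n-2} \trianglerighteq \cdots$, and correspondingly $G = F_n \rtimes_g \Z$ is built up as an iterated ascending HNN extension / mapping torus where each stage adds one generator $x_i$ with $g(x_i) = x_i u_i$ for $u_i$ a word in the previous generators. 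The Fox matrix $\mathrm{Id} - t F(g)$ is then (after reordering) lower triangular with diagonal entries $1 - t \cdot(\text{something mapping to }1\text{ in }\Z G$ modulo lower terms$)$, and the Dieudonn\'e determinant factors as a product $\prod_{i=1}^{n-1}(\text{term})$ up to a unit, giving $P_{L^2}(G) = \sum_{i=1}^{n-1} P(\text{factor}_i) - P(s-1)$ by multiplicativity of $P$. Each factor contributes a segment, and identifying the endpoints of these segments with elements $t_i \in G \smallsetminus F_n$ (arising from the $t x_i$ terms) yields that $\|\phi\|_{P_{L^2}(G)} = \sum_i |\phi(t_i)|$ up to the contribution of $s-1$; hence $F_\phi(P_{L^2}(G)) = 0$ iff $\phi(t_i) \neq 0$ for all $i$. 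Combined with \cref{main corollary} this is exactly the claimed description of $\Sigma(G)$.

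\textbf{Main obstacle.} The hard part is the explicit computation of $P_{L^2}(G)$ for UPG automorphisms --- specifically, showing that the Dieudonn\'e determinant of $A(g;\S,s)$, which a priori involves non-commutative division and additions in $\D(G)$ that can destroy polytope control, actually simplifies to a product of linear (segment) factors because of the triangular/unipotent structure. One must carefully set up the $g$-invariant filtration of $F_n$, use the sum formula (\cref{properties}(\ref{sum formula})) and restriction (\cref{properties}(\ref{restriction})) for the universal $L^2$-torsion to reduce the mapping-torus computation to the rank-one pieces, and track how the segments assemble; the unipotence of $\bar g$ is exactly what guarantees that no eigenvalue cancellation or higher-dimensional polytope appears. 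A secondary subtlety is matching the $t_i$ we produce from the algebra with those in Cashen--Levitt's topological statement, but since the theorem only asserts \emph{existence} of such $t_i$, it suffices to exhibit ours. Once $P_{L^2}(G)$ is known to be a Minkowski sum of $n-1$ segments in independent enough directions, the face-map criterion of \cref{main corollary} finishes the proof immediately, including the symmetry $\Sigma(G) = -\Sigma(G)$ since a Minkowski sum of segments is centrally symmetric.
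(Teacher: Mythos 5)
Your argument is circular within the logical structure of this paper, and it also misidentifies what needs to be proved. \cref{thm CashenLevitt} is not proved in the paper at all: it is quoted verbatim from Cashen--Levitt \cite[Theorem 1.1]{CashenLevitt2014} and used as an external input. The paper's own contribution in this section runs in the opposite direction: \cref{l2-torsion of upg} computes $\tor_u(G;p_k)$ for UPG automorphisms (its proof even borrows the elements $t_i$ and the inductive scheme from the proof of \cref{thm CashenLevitt} via the splitting lemma \cref{splitting for upg}), and then \cref{main corollary} is \emph{deduced from} \cref{thm CashenLevitt} together with \cref{l2-torsion of upg}. Your plan invokes \cref{main corollary} as the ``reduction to the polytope'' step, so you are assuming a statement whose only justification in this paper is the very theorem you are trying to prove. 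Nor can you substitute the paper's independent BNS machinery: \cref{main thm BNS} and \cref{formula for BNS} are proved only for descending HNN extensions of $F_2$ (the Dieudonn\'e determinant manipulations there are genuinely $2\times 2$ arguments), so they give no criterion relating $P_{L^2}(G)$ to $\Sigma(G)$ for $F_n\rtimes_g\Z$ with $n\geq 3$. A correct proof of \cref{thm CashenLevitt} requires an independent computation of $\Sigma(G)$, which Cashen--Levitt carry out by entirely different (non-$L^2$) methods.

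Two further points would remain even if the circularity were repaired. First, the theorem concerns all polynomially growing automorphisms, whereas your argument only addresses the UPG case; passing to a UPG power $g^k$ replaces $G$ by a proper finite-index subgroup $F_n\rtimes_{g^k}\Z$, and neither the BNS invariant nor the universal $L^2$-torsion transfers back formally --- the paper explicitly flags this difficulty in \cref{upg vs pol}. Second, your symmetry argument conflates two different things: $F_n\rtimes_{g^{-1}}\Z\cong G$ gives an automorphism of $G$ inverting $t$ but acting trivially on characters vanishing on $F_n$ only up to sign in the $t$-coordinate, so $\Sigma(G)=-\Sigma(G)$ does not follow from that isomorphism alone; in the intended chain of reasoning the symmetry is a \emph{consequence} of the explicit description by the $t_i$ (or of the central symmetry of a Minkowski sum of segments), which again presupposes the result.
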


Motivated by this, we prove

\begin{thm}\label{l2-torsion of upg}
	Let $G = F_n\rtimes_g \Z$ with $n\geq 1$ and $g$ a UPG automorphism. Denote by $p_k\colon G\to \Gamma_k = G/G_r^{k+1}$ the projection, where $G_r^{k}$ denotes the $k$-th subgroup of the rational derived series. For simplicity write $\Gamma_\infty$ for $G$ and $p_\infty$ for $\id_G$.
	
	Then there are elements $t_1, ..., t_{n-1}\in G\s- F_n$ which can be chosen to coincide with those of \cref{thm CashenLevitt} such that for $k\in\N\cup \{\infty\}$
	\begin{equation}\label{tor of upg} 
		\tor_u(G;p_k) = - \sum_{i=1}^{n-1} \; [\Z \Gamma_k\tolabel{p_k(1-t_i)} \Z \Gamma_k]
	\end{equation}
	In particular,
	\[ P_{L^2}(G;p_k) = \sum_{i=1}^{n-1} \; P(1-t_i) \in \Pol(H_1(G)_f)\]
	is a polytope (and not merely a difference of polytopes) which is independent of $k\in\N\cup \{\infty\}$.
\end{thm}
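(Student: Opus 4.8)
The plan is to prove the formula \eqref{tor of upg} by induction on $n$, exploiting the structure theory of UPG automorphisms, which says that an UPG automorphism of $F_n$ is conjugate to one which preserves a filtration $1 = F^{(0)} < F^{(1)} < \dots < F^{(n)} = F_n$ by free factors with $\operatorname{rk} F^{(i)} = i$, and which acts on each quotient $F^{(i)}/F^{(i-1)}$ (a copy of $\Z$) trivially; concretely, after choosing a compatible free basis $s_1, \dots, s_n$, we have $g(s_i) = s_i u_i$ where $u_i \in \langle s_1, \dots, s_{i-1}\rangle$. The base case $n = 1$ gives $g = \id$ on $F_1 = \Z$, so $G = \Z^2$ and $\tor_u(G)$ is trivial by a direct Fox-calculus computation (the matrix $A(g;\S,s)$ is $1\times 1$ equal to $1 - t$, cancelling against $[\Z G \xrightarrow{s-1} \Z G]$), matching the empty sum on the right.

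For the inductive step I would realise $G = F_n \rtimes_g \Z$ as a descending HNN extension of $F_n$ with stable letter $t$, and then re-present it as an HNN extension over $F_{n-1} = \langle s_1,\dots,s_{n-1}\rangle$. Writing $t_i = $ suitable elements built from $t$ and the $u_j$'s (this is where I match up with Cashen--Levitt's $t_i$), one gets a decomposition of the classifying space $T_{Bg}$: the subgroup $H = \langle F_{n-1}, t\rangle$ is itself an $F_{n-1}$-by-$\Z$ group via the restriction $g|_{F_{n-1}}$, which is again UPG, and $G$ is an HNN extension of $H$ with stable letter $s_n$, conjugation by which acts on the abelianised $\langle s_n\rangle$ factor trivially up to the correction term $u_n$. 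I would then apply the sum formula \cref{properties}(\ref{sum formula}) (equivalently, the short exact sequence of chain complexes argument used in the proof of \cref{invariants for fbc}(\ref{fbc:l2tor})) to split off one extra generator of the form $[\Z\Gamma_k \xrightarrow{p_k(1-t_{n-1})}\Z\Gamma_k]$, reducing to the computation for $H$. Concretely: the Fox matrix $F(g)$ of $g$ in the basis $s_1,\dots,s_n$ is block lower-triangular (since $g(s_i) \in \langle s_1,\dots,s_i\rangle$) with diagonal entries related to $1$; hence $A(g;\S, s_n)$, after elementary row/column operations over $\D(\Gamma_k)$ which do not change the Dieudonné determinant, becomes block triangular, one diagonal block being the analogous matrix for $g|_{F_{n-1}}$ and the other being a $1\times 1$ block $1 - t\,\partial(g(s_n))/\partial(s_n) = 1-t$, whose polytope is $P(1-t)$ — and conjugating $t$ appropriately inside $\D(\Gamma_k)$ turns this into $P(1-t_{n-1})$. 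Multiplicativity of $\det_{\D(\Gamma_k)}$ then gives the product formula, and applying the polytope homomorphism $\P$ and additivity $P(xy) = P(x)+P(y)$ yields the Minkowski-sum statement.

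The independence of $k$ is then automatic from \eqref{tor of upg}: the right-hand side involves only the classes $[\Z\Gamma_k \xrightarrow{p_k(1-t_i)}\Z\Gamma_k]$, and after applying the polytope homomorphism $\P\colon \Wh^w(\Gamma_k)\to\Pol_T(H_1(G)_f)$ each such term maps to $P(p_0(1-t_i))$, i.e. the image depends only on the abelianisation $H_1(G)_f = \Gamma_0$ and not on $\Gamma_k$. The statement that $P_{L^2}(G;p_k)$ is an honest polytope rather than a formal difference follows because $P(1-t_i) = P(\{0, \bar t_i\})$ is a genuine (one-dimensional) polytope and the Minkowski sum of polytopes is a polytope — there is no denominator to subtract, which is the payoff of the triangular structure killing the $s-1$ term against a diagonal block.

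The main obstacle I anticipate is the bookkeeping needed to identify the $t_i$ produced by this inductive splitting with the specific elements appearing in Cashen--Levitt's \cref{thm CashenLevitt} — the HNN re-presentation over $F_{n-1}$ introduces conjugates like $t\, u_n\, t^{-1}$ and one must track carefully how the stable letters transform under the change of presentation (as in \cref{prop} of the preliminaries), and verify that the resulting $t_i \in G \smallsetminus F_n$ are exactly those governing the BNS invariant. A secondary technical point is justifying the elementary operations bringing $A(g;\S,s_n)$ to block-triangular form over $\D(\Gamma_k)$ while controlling that the off-diagonal blocks do not contribute to the Dieudonné determinant — this is routine given the triangular shape of the Fox matrix, but requires care that the operations are performed over the right ring so that $\det^c_{\D(\Gamma_k)}$ is genuinely unchanged, invoking the same reasoning as in the proof of \cref{main thm BNS}.
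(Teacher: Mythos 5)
The Fox-calculus half of your argument is essentially the computation the paper also performs, but the structural input you feed into the induction is not available. There is no theorem asserting that a UPG automorphism is conjugate (even only in $\Out(F_n)$) to one preserving a complete flag of free factors $F^{(1)} < \dots < F^{(n)}$ with $g(s_i) = s_i u_i$ and $u_i \in \langle s_1, \dots, s_{i-1}\rangle$. What the Bestvina--Feighn--Handel train-track theory gives, and what the paper actually uses via \cref{splitting for upg}, is only a dichotomy: some representative $h$ of the outer class either splits as $h_1 \ast h_2$ over an invariant free-product decomposition $F_n = B_1 \ast B_2$, or preserves a corank-one free factor $B_1$ and sends a complementary generator $x$ to $xu$ with $u \in B_1$. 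Your claimed normal form amounts to saying that the second alternative can always be arranged and iterated down a flag; this breaks precisely in the first alternative, because the inner automorphisms needed to triangularise $h_1$ and $h_2$ separately live in $B_1$ and $B_2$ and cannot in general be realised by a single inner automorphism of $F_n$. Concretely, the UPG automorphism $\phi$ of $F_3 = \langle a,b,c\rangle$ given by $a \mapsto a$, $b \mapsto b$, $c \mapsto \gamma c \gamma^{-1}$ with $\gamma = [a,b]$ admits no triangular representative with respect to any basis: since $\phi$ is trivial on homology, a triangular representative would have to fix a rank-two free factor $Q$ pointwise; analysing centralisers in the mapping torus (a graph manifold group) forces $Q$ to be conjugate to $\langle a,b\rangle$, and then a normal-form computation in $\langle a,b\rangle \ast \langle c\rangle$ shows that no complementary basis element $s_3$ can satisfy $\phi(s_3) \in s_3\langle a,b\rangle$. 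So the induction as you set it up never even applies to such automorphisms.

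The repair is to run the induction along the actual dichotomy of \cref{splitting for upg}, which is what the paper does. Your computation is (a version of) its Case (2): there the Fox matrix is block lower-triangular with bottom-right entry $1$, and one gets $\tor_u(G;p_k) = \tor_u(G_1;p_k^1) - [p_k(1-t)]$. You are missing the Case (1) branch: for an invariant splitting $F_n = B_1 \ast B_2$ the Fox matrix is block diagonal, and comparing with \cref{invariants for fbc} applied to the two sub-mapping tori yields $\tor_u(G;p_k) = (j_1)_*\tor_u(G_1;p_k^1) + (j_2)_*\tor_u(G_2;p_k^2) - [p_k(t-1)]$, so here the stable letter $t$ itself becomes one of the $t_i$. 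This two-case bookkeeping is also exactly how the elements $t_i$ get matched with those of \cref{thm CashenLevitt}, since Cashen--Levitt's induction runs through the same lemma; with your (unavailable) flag normal form that matching would not come for free. Your closing remarks --- independence of $k$ because each term maps to $P(p_0(1-t_i))$, and $P_{L^2}(G;p_k)$ being an honest polytope --- are fine once the formula \eqref{tor of upg} is established.
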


Combining the previous two results, we see that the BNS-invariant of UPG automorphisms is easily determined by their $L^2$-torsion polytope. More precisely, we have the following analogue of \cite[Theorem 1.1]{FriedlTillmann2015}.

\begin{cor}\label{main corollary}
	Let $G = F_n\rtimes_g \Z$ with $n\geq 2$ and $g$ a UPG automorphism. Let $\phi\in H^1(G;\R)$. Then $[\phi]\in \Sigma(G)$ if and only if $F_\phi(P_{L^2}(G))=0$ in $\Pol_T(H_1(G)_f)$.
\end{cor}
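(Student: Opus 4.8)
The proof will combine Cashen--Levitt's description of $\Sigma(G)$ (\cref{thm CashenLevitt}) with the explicit computation of the $L^2$-torsion polytope of a UPG automorphism (\cref{l2-torsion of upg}). By \cref{l2-torsion of upg} we have $P_{L^2}(G) = \sum_{i=1}^{n-1} P(1-t_i)$ in $\Pol_T(H_1(G)_f)$, where $t_1,\dots,t_{n-1}$ are the same elements appearing in \cref{thm CashenLevitt}. Each summand $P(1-t_i)$ is the segment joining $0$ to $\bar t_i$, where $\bar t_i$ denotes the image of $t_i$ in $H := H_1(G)_f$. Since $F_\phi$ is a group homomorphism on $\Pol_T(H)$ and respects Minkowski sums, $F_\phi(P_{L^2}(G)) = \sum_{i=1}^{n-1} F_\phi(P(1-t_i))$, and the terms on the right are all (classes of) polytopes, not formal differences; by cancellativity of the Minkowski sum the total vanishes in $\Pol_T(H)$ if and only if each $F_\phi(P(1-t_i))$ is a point.

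\textbf{Key steps.} First I would record that for $\phi \in H^1(G;\R) = \Hom(H,\R)$ and the segment $S_i := P(1-t_i) = \mathrm{conv}\{0,\bar t_i\}$, the minimal face $F_\phi(S_i)$ is a single point precisely when $\phi$ does not annihilate $\bar t_i$, i.e. when $\phi(t_i)\neq 0$; if $\phi(t_i)=0$ then $F_\phi(S_i)=S_i$ has positive length (here one uses that $\bar t_i \neq 0$ in $H$, which holds because $t_i \notin F_n$ means $t_i$ has nonzero image under the induced map $G \to \Z$, hence nonzero image in $H$). Second, I would assemble: $F_\phi(P_{L^2}(G)) = 0$ in $\Pol_T(H)$ $\iff$ each $F_\phi(S_i)$ is a point $\iff$ $\phi(t_i)\neq 0$ for all $1\le i\le n-1$. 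Third, I would invoke \cref{thm CashenLevitt}, which states exactly that $[\phi]\in\Sigma(G)=-\Sigma(G)$ if and only if $\phi(t_i)\neq 0$ for all $i$, and conclude. One should also note the degenerate case $\phi = 0$ is excluded since $[\phi]$ is only defined for $\phi\neq 0$ (if $n=2$ and $\phi\neq 0$ one still needs $\phi(t_1)\neq 0$, consistent with the statement).

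\textbf{Main obstacle.} The only real content is matching the combinatorics of minimal faces of a sum of segments with the hyperplane-avoidance condition $\phi(t_i)\neq 0$; this is elementary once \cref{l2-torsion of upg} guarantees that $P_{L^2}(G)$ is genuinely the Minkowski sum $\sum P(1-t_i)$ (rather than a formal difference) and that the $t_i$ may be taken to be Cashen--Levitt's elements. The one point requiring a little care is the use of cancellativity to deduce that a sum of polytope-classes is trivial in $\Pol_T(H)$ only when each is a point-class; this follows because $\Pol_T(H)$ injects into $\Pol_T(H)$-valued data by the cancellative Minkowski structure, or more concretely because a sum of segments is a point iff every segment is a point. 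So I expect the proof to be short.

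\begin{proof}
By \cref{l2-torsion of upg} we may choose the elements $t_1,\dots,t_{n-1}\in G\smallsetminus F_n$ to be those of \cref{thm CashenLevitt}, and we have
\[
 P_{L^2}(G) = \sum_{i=1}^{n-1} P(1-t_i) \in \Pol_T(H)
\]
where $H := H_1(G)_f$ and $P(1-t_i)$ is the segment in $H\otimes_\Z\R$ joining $0$ to $\bar t_i$, the image of $t_i$ in $H$. Since $t_i\notin F_n$, its image under the induced epimorphism $G\to\Z$ is non-zero, hence $\bar t_i\neq 0$ in $H$ and $P(1-t_i)$ is a non-degenerate segment.

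The face map $F_\phi\colon \Pol_T(H)\to\Pol_T(H)$ is a homomorphism and respects Minkowski sums, so
\[
 F_\phi(P_{L^2}(G)) = \sum_{i=1}^{n-1} F_\phi(P(1-t_i))
\]
For a single segment $S_i = P(1-t_i)$ with endpoints $0$ and $\bar t_i$, the minimal face $F_\phi(S_i)$ equals the whole of $S_i$ if $\phi(\bar t_i)=0$, and is a single point (one of the two endpoints) if $\phi(\bar t_i)\neq 0$. Note $\phi(\bar t_i)=\phi(t_i)$ under the identification $\Hom(G,\R)=\Hom(H,\R)$.

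Now $F_\phi(P_{L^2}(G))$ is a sum of segments in $\Pol_T(H)$; such a sum is the class of a point (i.e. is $0$ in $\Pol_T(H)$) if and only if every summand is the class of a point, by cancellativity of the Minkowski sum. Hence
\[
 F_\phi(P_{L^2}(G)) = 0 \ \Longleftrightarrow\ \phi(t_i)\neq 0 \text{ for all } 1\leq i\leq n-1.
\]
By \cref{thm CashenLevitt}, the right-hand condition is exactly the statement that $[\phi]\in\Sigma(G)$. This completes the proof.
\end{proof}
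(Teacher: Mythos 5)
Your proof is correct and follows essentially the same route as the paper: both combine \cref{l2-torsion of upg} with \cref{thm CashenLevitt} and reduce the statement to the elementary observation that $F_\phi\bigl(\sum_i P(1-t_i)\bigr)$ is a point precisely when $\phi(t_i)\neq 0$ for all $i$. Your phrasing via additivity of $F_\phi$ under Minkowski sums (rather than the paper's remark that every one-dimensional face of the sum contains a translate of some $P(1-t_i)$) is only a cosmetic variant, and your explicit check that $\bar t_i\neq 0$ in $H_1(G)_f$ is a welcome extra detail.
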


\begin{proof}
	Any one-dimensional face of
	\[P_{L^2}(G) = \sum_{i=1}^{n-1} \; P(1-t_i)\]
	contains a translate of $P(1-t_i)$ for some $1\leq i\leq n-1$.
	
	\smallskip
	Now $F_\phi(P_{L^2}(G))\neq 0$ \iff $F_\phi(P_{L^2}(G))$ contains a one-dimensional face, i.e. a translate of $P(1-t_i)$ for some $i$. This is equivalent to $\phi(t_i) = 0$ for some $i$, which by \cref{thm CashenLevitt} is equivalent to $[\phi]\notin \Sigma(G)$.
\end{proof}

\begin{rmk}\label{upg vs pol}
	We suspect \cref{l2-torsion of upg} to hold as well for polynomially growing automorphisms. It is well-known that any polynomially growing automorphism has a power that is UPG, see Bestvina--Feighn--Handel's \cite[Corollary 5.7.6]{Bestvinaetal2000}. Thus, in order to reduce \cref{l2-torsion of upg} for polynomially growing automorphisms to the case of UPG automorphisms, one needs a better understanding of the restriction homomorphism
	\[i^*\colon \Wh^w(F_n\rtimes_{g}\Z) \to \Wh^w(F_n\rtimes_{g^k}\Z)\]
	(induced by the obvious inclusion $i\colon F_n\rtimes_{g^k}\Z\to F_n\rtimes_{g}\Z\:$) since it maps $\tor_u(F_n\rtimes_g\Z)$ to $\tor_u(F_n\rtimes_{g^k}\Z)$ (see \cref{properties} (\ref{restriction})).
\end{rmk}

We also obtain

\begin{cor}\label{main corollary 2}
	Let $G = F_n\rtimes_g \Z$ with $n\geq 2$ and $g$ a UPG automorphism. Let $\phi\in H^1(G;\R)$. Then for all $k\in \N\cup \{\infty\}$ we have
	\[ \|\phi\|_A = \delta_k(\phi) = \|\phi\|_T.\]
\end{cor}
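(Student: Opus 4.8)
The plan is to read this off almost immediately from \cref{l2-torsion of upg}. That theorem says that the higher Alexander polytopes $P_{L^2}(G,p_k)$ for $k\in\N$ and the $L^2$-torsion polytope $P_{L^2}(G)=P_{L^2}(G,p_\infty)$ are \emph{all the same} single polytope $\sum_{i=1}^{n-1}P(1-t_i)\in\Pol(H_1(G)_f)$. Since $\norm$ is a group homomorphism (in particular well defined on $\Pol_T(H_1(G)_f)$), it sends this common polytope to a single seminorm on $H^1(G;\R)$, and all that remains is to identify that seminorm with each of $\delta_k$, $\|\cdot\|_A$ and $\|\cdot\|_T$ via the dictionary already established in the paper.

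First I would verify that $b_1(G)\geq 2$; this is the only place the hypothesis that $g$ is UPG (rather than merely polynomially growing) enters visibly in this corollary. Indeed, the image $\bar g\in\GL(n,\Z)$ of $g$ is unipotent, so $\id-\bar g$ is nilpotent and therefore singular (as $n\geq 1$), whence the coinvariants $H_1(F_n)_{\bar g}=\Z^n/(\id-\bar g)\Z^n$ have positive rank; adding the free $\Z$-quotient supplied by the extension $1\to F_n\to G\to\Z\to 1$ gives $b_1(G)\geq 2$. The purpose of this step is to exclude the exceptional case ``$b_1(G)=1$ and $k=0$'' in \cref{higher Alex polytopes} (and the accompanying correction term in \cref{invariants for fbc}\,(\ref{fbc:alex})), so that for every $k\in\N$ one has cleanly $\delta_k=\norm(P_{L^2}(G,p_k))$.

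Then I would chain the identifications together. For $k\in\N$, \cref{higher Alex polytopes} together with \cref{l2-torsion of upg} gives
\[
\delta_k(\phi)=\norm\bigl(P_{L^2}(G,p_k)\bigr)(\phi)=\norm\Bigl(\sum_{i=1}^{n-1}P(1-t_i)\Bigr)(\phi)=\norm\bigl(P_{L^2}(G)\bigr)(\phi)=\|\phi\|_T,
\]
the last equality being \cref{def:Thurston for fbc}. Taking $k=0$ and using $\delta_0=\|\cdot\|_A$ yields $\|\phi\|_A=\|\phi\|_T$, and for $k=\infty$ the asserted equality is just the convention $p_\infty=\id_G$, $\Gamma_\infty=G$. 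I do not expect a genuine obstacle here: all the substance lies in \cref{l2-torsion of upg}, and the one small point to be careful about is the $b_1(G)$ computation above. (Alternatively, one could get $\delta_0\leq\delta_1\leq\cdots\leq\|\cdot\|_T$ for free from \cref{main thm alex vs thurston} and use \cref{l2-torsion of upg} only for the reverse estimate $\|\cdot\|_T\leq\delta_0$, but since $b_1(G)\geq 2$ holds regardless, the direct argument is the tidiest.)
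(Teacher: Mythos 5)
Your proposal is correct and follows essentially the same route as the paper: both deduce the result immediately from the $k$-independence of $P_{L^2}(G;p_k)$ in \cref{l2-torsion of upg}, using $b_1(G)\geq 2$ to rule out the exceptional case for $\delta_0$ and then identifying the norms via \cref{higher Alex polytopes}, \cref{invariants for fbc} and \cref{def:Thurston for fbc}. The only cosmetic difference is that you verify $b_1(G)\geq 2$ directly from unipotency of $\bar g$, whereas the paper cites \cite[Remark 5.6]{CashenLevitt2014}; your argument for this is fine.
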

\begin{proof}
	This follows directly from the fact that $P_{L^2}(G;p_k)$ is independent of $k\in \N\cup\{\infty\}$ as stated in \cref{l2-torsion of upg}. Note that $b_1(G)\geq 2$ by \cite[Remark 5.6]{CashenLevitt2014}. Hence we get as special cases $P_{L^2}(G;p_0) = P_A(G)$ by \cref{invariants for fbc} (\ref{fbc:alex}) and this polytope determines the Alexander norm, and on the other hand $P_{L^2}(G;p_\infty) = P_{L^2}(G)$ which determines the Thurston norm.
\end{proof}

Theorems \ref{thm CashenLevitt} and \ref{l2-torsion of upg} both rely on the following lemma which follows from the train track theory of Bestvina--Feighn--Handel \cite{Bestvinaetal2000}; see \cite[Proposition 5.9]{CashenLevitt2014} for the argument.

\begin{lem}\label{splitting for upg}
	For $n\geq 2$ and a UPG automorphism $g\in \Aut(F_n)$, there exists $h\in\Aut(F_n)$ representing the same outer automorphism class as $g$, such that either
	\begin{enumerate}
		\item there is an $h$-invariant splitting $F_n = B_1\ast B_2, h= h_1\ast h_2$; or
		\item there is a splitting $F_n = B_1\ast \langle x\rangle$ such that $B_1$ is $h$-invariant and $h(x) = xu$ for some $u\in B_1$.
	\end{enumerate}
\end{lem}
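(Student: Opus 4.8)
The plan is to read this off the relative train track technology of Bestvina--Feighn--Handel, exactly as in the argument for \cite[Proposition 5.9]{CashenLevitt2014}. The key input is the structure theorem for unipotent polynomially growing automorphisms: after replacing $g$ by a suitable element of its outer class, it is realised by a homotopy equivalence $f\colon\Gamma\to\Gamma$ of a finite connected graph with $\pi_1(\Gamma)\cong F_n$, where $\Gamma$ carries an $f$-invariant filtration whose successive strata are single edges, $f$ fixes every vertex of $\Gamma$, and on the edge $E$ of each stratum one has $f(E)=E\cdot u$ for a tight closed edge-path $u$ contained in the union of the lower strata. (Unipotence rules out permutations of strata and periodic vertex behaviour, while polynomial growth forces every stratum to be of this ``linear'' non-exponential form; see \cite[Corollary 5.7.6]{Bestvinaetal2000} and the surrounding results.) We may also take $\Gamma$ to be a core graph with no vertex of valence at most $2$.

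First I would discard everything but the top edge: let $E$ be the edge of the top stratum and $\Gamma'$ the union of the lower strata, so that $\Gamma=\Gamma'\cup E$, the subgraph $\Gamma'$ is $f$-invariant with $f|_{\Gamma'}$ a homotopy equivalence, and $f(E)=Eu$ with $u$ a closed path in $\Gamma'$ based at the terminal vertex of $E$. Removing the single open edge $E$ from the connected graph $\Gamma$ leaves $\Gamma'$ with either two components or one; these two cases will yield the two alternatives of the lemma.

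If $\Gamma'=C_1\sqcup C_2$, let $v_i$ be the endpoint of $E$ lying in $C_i$. Taking $v_1$ as basepoint gives a free product decomposition $F_n=B_1\ast B_2$ with $B_1=\pi_1(C_1,v_1)$ and $B_2=E\,\pi_1(C_2,v_2)\,\overline E$; both factors are non-trivial because the valence hypothesis forces each $C_i$ to contain a cycle. Since $f$ fixes $v_1$ and $v_2$, carries each $C_i$ into itself, and $f(E)=Eu$ with $u$ a closed path at $v_2$ inside $C_2$, the automorphism $h:=f_\ast$ of $F_n$ sends $B_1$ onto $B_1$ and $B_2$ onto $B_2$, hence splits as $h=h_1\ast h_2$; this is alternative (1). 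If instead $\Gamma'$ is connected, take $v$ to be the initial vertex of $E$, fix an edge-path $p$ in $\Gamma'$ from the terminal vertex of $E$ back to $v$, and put $x:=Ep$; then $x$ is a loop at $v$ and $F_n=\pi_1(\Gamma,v)=B_1\ast\langle x\rangle$ with $B_1=\pi_1(\Gamma',v)$. Using $f(E)=Eu$ and that $f$ fixes $v$ and preserves $\Gamma'$ one computes $h(x)=Eu\,f(p)=x\cdot(\overline p\,u\,f(p))$, and $\overline p\,u\,f(p)$ is a closed path at $v$ inside $\Gamma'$, hence an element of $B_1$; combined with $h(B_1)=f_\ast(\pi_1(\Gamma',v))=B_1$ this is alternative (2).

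The main obstacle is purely the input from \cite{Bestvinaetal2000}: establishing the precise ``upper triangular'' normal form for UPG automorphisms (single-edge strata, all vertices fixed, linear twists over lower strata) and arranging it on a core graph without low-valence vertices so that the free factors above are genuinely non-trivial — this is where the full weight of relative train track theory and the structure of polynomially growing automorphisms enters. Once that normal form is granted, the case split on the number of components of $\Gamma\setminus E$ together with the elementary $\pi_1$-computations above finishes the proof, and in the write-up I would, as the paper does, ultimately cite \cite[Proposition 5.9]{CashenLevitt2014} for the details.
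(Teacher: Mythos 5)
Your proposal follows exactly the route the paper takes: the paper offers no independent argument for this lemma but simply invokes the Bestvina--Feighn--Handel upper-triangular (filtered) normal form for UPG automorphisms via Cashen--Levitt's Proposition 5.9, and your case analysis on whether the top-stratum edge $E$ separates $\Gamma$ is precisely that argument. One cosmetic remark: you do not need (and may not always be able) to arrange all vertices of valence at least $3$ while keeping the single-edge filtration --- the standard convention that marked graphs have no valence-one vertices already forces each component of $\Gamma \smallsetminus E$ to contain a cycle, which is all the non-triviality you use.
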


This lemma allows us two write the semi-direct product associated to a UPG automorphism as an iterated splitting over infinite cyclic subgroups with prescribed vertex groups. This is explained in \cite[Lemma 5.10]{CashenLevitt2014} and will be repeated in the following proof.

\begin{proof}[Proof of \cref{l2-torsion of upg}]
	We prove the statement by induction on $n$. For the base case $n=1$ we have $F_1\rtimes_g \Z\cong \Z^2$ and $\tor_u(\Z^2;p_k) = 0$ for all $k\in\N\cup \{\infty\}$ by \cite[Example 2.7]{FriedlLueck2015b} which already verifies (\ref{tor of upg}). 

	For the inductive step, we may assume that $g=h$ in the notation of \cref{splitting for upg} since the isomorphism class of $F_n\rtimes_g\Z$ only depends on the outer automorphism class of $g$. We analyse the two cases appearing in \cref{splitting for upg} separately.
	\medskip
	
	\noindent \textbf{Case 1:} There is a $g$-invariant splitting $F_n = B_1\ast B_2, g= g_1\ast g_2$. Write
	\[G_i = B_i\rtimes_{g_i} \Z\]
	and let $G_0 = \Z\into G_i$ be the inclusion of the second factor. Then we have
	\[ G = F_n\rtimes_g\Z \cong G_1 *_{G_0}  G_2\]
	and the Fox matrix of $g$ is of the form
	\[ F(g) =
	\begin{pmatrix}
	F(g_1 )& 0  \\
	0 & F(g_2)
	\end{pmatrix} 
	\]
	Let $j_i\colon G_i\to G$ be the inclusions, and denote a generator of $G_0$ and its image in the various groups $G_i$ by $t$.
	
	By \cite[Remark 5.6]{CashenLevitt2014}, we have $b_1(G) \geq 2$ and similarly for $G_1$ and $G_2$. Hence by \cref{invariants for fbc} (\ref{fbc:higher alex}) and (\ref{fbc:alex}) as well as the above matrix decomposition, we compute in $\Wh^w(\Gamma_k)$
	\begin{align} \label{induction upg1}
	\begin{split}
	\tor_u(G;p_k) &= -[p_k(I-t\cdot F(g))] + [p_k(t-1)]\\
	&= -[p_k(I-t\cdot F(g_1))] - [p_k(I-t\cdot F(g_2))] + [p_k(t-1)]\\
	&= (j_1)_*(\tor_u(G_1;p_k^1))) + (j_2)_*(\tor_u(G_2;p_k^2)))  - [p_k(t-1)]\\
	\end{split}
	\end{align}
	where $p_k^i$ denote the projections on the quotients of the rational derived series of $G_i$. Here we have used that in our setting $p_k^i$ can be seen as a restriction of $p_k$.
	
	Denote the rank of $B_i$ by $r_i$. By the inductive hypothesis applied to $G_i$, there are elements
	\[t'_1,\dots, t'_{r_1-1} \in G_1 \s- B_1\]
	and
	\[t''_1,\dots , t''_{r_2-1} \in G_2 \s- B_2\]
	such that
	\begin{equation}\label{vertex group1}
	\tor_u(G_1;p_k^1) = -\sum_{i=1}^{r_1-1}\; [p_k^1(1-t'_i)]
	\end{equation}
	and 
	\begin{equation}\label{vertex group2}
	\tor_u(G_2;p_k^2) = -\sum_{i=1}^{r_2-1}\; [p_k^2(1-t''_i)]
	\end{equation} 
	Notice that $r_1 + r_2 = n$. Moreover, the corresponding induction step in the proof of \cref{thm CashenLevitt} adds $t$ to the union of the $t'_i$ and the $t''_i$. Thus the desired statement (\ref{tor of upg}) follows by combining (\ref{induction upg1}), (\ref{vertex group1}), and (\ref{vertex group2}).

	\medskip
	\noindent \textbf{Case 2:} There is a splitting $F_n = B_1\ast \langle x\rangle$ such that $B_1$ is $g$-invariant and $g(x) = xu$ for some $u\in B_1$. In this case, let $g_1 = g|_{G_1}$, $ G_1 = B_1\rtimes_{g_1}\Z \subseteq G$, and denote the stable letter of $G_1$ and $G$ by $t$. 

	In this case, the Fox matrix of $g$ takes the form
	\[ F(g) =
	\begin{pmatrix}
	F(g_1 )& 0  \\
	* & 1
	\end{pmatrix} 
	\]
	From this we compute in $\Wh^w(\Gamma_k)$ similarly as in the first case
	\begin{align} \label{induction upg}
	\begin{split}
		\tor_u(G;p_k) &= -[p_k(I-t\cdot F(g))] + [p_k(t-1)]\\
								&= -[p_k(I-t\cdot F(g_1))] - [p_k(1-t)] + [p_k(t-1)]\\
								&= \tor_u(G_1; p_k^1) - [p_k(1-t)]
	\end{split}
	\end{align}
	The corresponding induction step in the proof of \cref{thm CashenLevitt} adds $t$ to the elements $t'_i$ belonging to $G_1$ which we get from the induction hypothesis. 
	
	This finishes the proof of \cref{l2-torsion of upg}.
\end{proof}

\begin{rmk}
	The same strategy as above can be used to prove that the ordinary $L^2$-torsion $\tor (g) := \tor(F_n\rtimes_g \Z)\in\R$ vanishes for all polynomially growing automorphisms. Here the reduction to UPG automorphisms explained in \cref{upg vs pol} is simpler since we have $\tor(g^k) = k\cdot \tor(g)$, so that the vanishing of the $L^2$-torsion of some power of $g$ implies the vanishing of the  $L^2$-torsion of $g$. This is a special case of a result of Clay \cite[Theorem 5.1]{Clay2015}.
\end{rmk}

\bibliographystyle{math}
\bibliography{bibliography}

\bigskip

\noindent
\textsc{Florian Funke} \hfill \textsc{Dawid Kielak} \newline
Mathematisches Institut  \hfill Fakult\"at f\"ur Mathematik  \newline
Universit\"at Bonn  \hfill Universit\"at Bielefeld \newline
Endenicher Allee 60 \hfill Postfach 100131  \newline
D-53115 Bonn \hfill D-33501 Bielefeld \newline
Germany \hfill Germany \newline
\texttt{ffunke@math.uni-bonn.de} \hfill \texttt{dkielak@math.uni-bielefeld.de}

\end{document}